\newtheorem{definition}{Definition}[section]
\newtheorem{theorem}{Theorem}[section]
\newtheorem{remark}{Remark}[section]
\numberwithin{equation}{section}
\newtheorem{lemma}[theorem]{Lemma}
\newtheorem{proposition}{Proposition}[section]
\DeclareMathOperator{\rank}{\mathrm{rank}}
\DeclareMathOperator{\trace}{trace}
\DeclareMathOperator{\vect}{vec}
\DeclareMathOperator{\argmin}{argmin}
\DeclareMathOperator{\TT}{TT}
\DeclareMathOperator{\gl}{GL}
\DeclareMathOperator{\grad}{grad}
\DeclareMathOperator{\Hess}{Hess}
\DeclareMathOperator{\ten}{ten}
\DeclareMathOperator{\QR}{QR}
\DeclareMathOperator{\TTSVD}{TT-SVD}
\DeclareMathOperator{\OS}{OS}
\def\lb{\left(}
\def\rb{\right)}
\def\lsb{\left[}
\def\rsb{\right]}
\newcommand{\matsnorm}[2]{\left\| #1\right\|_{{#2}}}
\newcommand{\fronorm}[1]{\ensuremath{\matsnorm{#1}{\footnotesize{\mathsf{F}}}}}
\newcommand{\bfm}[1]{\bm{#1}}
   \def\mA{\bfm A}  
\def\vb{\bfm b}   \def\mB{\bfm B}  
   \def\mD{\bfm D}  
\def\ve{\bfm e}     
   \def\mG{\bfm G}  
   \def\mH{\bfm H}  
   \def\mI{\bfm I}  
   \def\mJ{\bfm J}  
   \def\mL{\bfm L}  
   \def\mM{\bfm M}
   \def\mQ{\bfm Q}  
\def\vr{\bfm r}   \def\mR{\bfm R}  \def\R{\mathbb{R}}
   \def\mS{\bfm S}  
   \def\mU{\bfm U}  
   \def\mV{\bfm V}  
   \def\mW{\bfm W}  
\def\vx{\bfm x}   \def\mX{\bfm X}  
   \def\mY{\bfm Y}  
   \def\mZ{\bfm Z}
\def\calA{{\cal  A}} 
\def\calB{{\cal  B}}
\def\calG{{\cal  G}} 
\def\calH{{\cal  H}}
\def\calM{{\cal  M}}
\def\calP{{\cal  P}}
\def\calS{{\cal  S}} 
\def\calT{{\cal  T}} 
\def\calV{{\cal  V}} 
\def\calX{{\cal  X}} 
\def\calY{{\cal  Y}} 
\def\calZ{{\cal  Z}}
\def \tran {\mathsf{T}}
\def \ProjX1 {\calP_{\mX_1}^{(1)}}
\def \ProjX2 {\calP_{\mX_2}^{(2)}}
\def \ProjX3 {\calP_{\mX_3}^{(3)}}
\begin{document}

\title{Tensor Completion via  Tensor Train Based Low-Rank Quotient Geometry under a Preconditioned Metric\footnotetext{Authors  are listed alphabetically.}}

\author[1]{Jian-Feng Cai}
\author[2]{Wen Huang}
\author[3]{Haifeng Wang}
\author[3]{Ke Wei}
\affil[1]{Department of Mathematics, Hong Kong University of Science and Technology, Clear Water Bay, Kowloon, Hong Kong SAR, China.\vspace{.15cm}}
\affil[2]{School of Mathematical Sciences, Xiamen University, Xiamen, China.\vspace{.15cm}}
\affil[3]{School of Data Science, Fudan University, Shanghai, China.\vspace{.15cm}}

\date{}

\maketitle

\begin{abstract}
Low-rank tensor completion problem is about recovering a tensor from partially observed entries. 
We consider this problem in the tensor train format and extend the preconditioned metric from the matrix case to the tensor case. The first-order and second-order quotient geometry of the manifold of fixed tensor train rank tensors under this metric is studied in detail. Algorithms, including Riemannian gradient descent, Riemannian conjugate gradient,  and Riemannian Gauss-Newton, have been proposed for the tensor completion problem based on the quotient geometry. It has also been shown that the Riemannian Gauss-Newton method on the quotient geometry is equivalent to the Riemannian Gauss-Newton method on the embedded geometry with a specific retraction. 
Empirical evaluations  on random instances as well as on function-related tensors show that the proposed algorithms are competitive with other existing algorithms in terms of recovery ability, convergence performance, and reconstruction quality.\\

\noindent
\textbf{Keywords.} Low-rank tensor completion, tensor train decomposition,  Riemannian optimization, quotient geometry,  preconditioned metric
\end{abstract}

\section{Introduction}
Tensors are multidimensional arrays which arise in a wide range of applications, including but not limited to topic modeling~\cite{anandkumar2015spectral}, computer version~\cite{liu2012tensor}, collaborative filtering~\cite{karatzoglou2010multiverse},  and signal processing~\cite{cichocki2015tensor}. Tensor completion refers to the problem of recovering  the target tensor from its partial entries. It is not hard to see that, without any additional assumptions, tensor completion is an ill-posed problem. On the other hand,
this problem can be solved  when the target tensor 
 possesses certain intrinsic low-dimensional structures.  A notable example is low-rank tensor completion, where the target tensor is assumed to be low rank. In contrast to the matrix case, tensor has more complex rank notions up to different tensor decompositions such as
 CP decomposition~\cite{hitchcock1927expression}, Tucker decomposition~\cite{tucker1966some}, tensor train (TT) decomposition~\cite{oseledets2011tensor} (also  known in the computational physics community as matrix product state (MPS)~\cite{schollwock2011density,verstraete2008matrix}), and hierarchical Tucker (HT) decomposition~\cite{hackbusch2009new}.
 In this manuscript, we focus on the TT decomposition, a special form of the HT decomposition. 
Then the low-rank tensor completion problem can be formulated as follows:
\begin{align}\label{eq: original problem}
\min_{\calX\in\R^{n_1\times n_2\times\cdots\times n_d}}~h\lb\calX\rb := \frac{1}{2}\fronorm{\calP_{\Omega}\lb \calX\rb -\calP_{\Omega}\lb\calT\rb}^2\quad\text{s.t. }~\rank_{\TT}\lb\calX\rb = \vr,
\end{align}
where $\calT$ is the target tensor  to be recovered, $\Omega$ is a subset of indices for the observed entries, $\rank_{\TT}\lb\calX\rb$ is the TT rank of $\calX$ which will be introduced later, and $\calP_{\Omega}$ is the sampling operator defined by
\begin{align*}
\calP_{\Omega}\lb\calX\rb\lb i_1, \cdots, i_d\rb   &= \begin{cases}
		\calX\lb i_1,\cdots,i_d\rb,~&\text{if }(i_1,\cdots,i_d)\in\Omega,\\
		0, ~&\text{otherwise}.
\end{cases}
\end{align*}

\subsection{Preliminaries on Tensor Train Decomposition}\label{sec: preliminary}

\paragraph{Tensor train decomposition.} 
In the tensor train (TT) decomposition, a $d$-dimensional tensor 
$\calX\in\R^{n_1\times n_2\times\cdots\times n_d}$ can be  expressed  as a product of $d$ third order tensors. More precisely, the $\lb i_1,\cdots,i_d\rb$-th element of $\calX$ is 
\begin{align*}
    \calX\lb i_1,\cdots,i_d\rb = \sum_{\ell_1 = 1}^{r_1}\cdots\sum_{\ell_{d-1} = 1}^{r_{d-1}}\calX^1\lb 1,i_1,\ell_1\rb\calX^2\lb \ell_1,i_2,\ell_2\rb\cdots\calX^d\lb\ell_{d-1}, i_d,1\rb,
\end{align*}
where  $\calX^k\in\R^{r_{k-1}\times n_k\times r_k}$ are the core tensors, $k = 1,\cdots,d$, and $r_0 = r_d = 1$. For conciseness,  we denote by $\calX^k\lb i_k\rb \in\R^{r_{k-1}\times r_k}$ the $i_k$-th slice of $\calX^k$ which yields the following equivalent expression
\begin{align}\label{eq: tt decomposition}
    \calX\lb i_1,\cdots,i_d\rb = \calX^1\lb i_1\rb\calX^2\lb i_2\rb\cdots\calX^d\lb i_d\rb.
\end{align}
In addition, for any $\calZ\in\R^{n_1\times n_2\times n_3}$, the left and  right unfolding operators: $L: \R^{n_1\times n_2\times n_3}\rightarrow \R^{n_1n_2\times n_3}$, $R: \R^{n_1\times n_2\times n_3}\rightarrow \R^{n_1\times n_2n_3}$ are defined as
\begin{align*}
    L\lb \calZ\rb\in\R^{n_1n_2\times n_3}: L\lb \calZ\rb\lb i_1+n_1\lb i_2-1\rb, i_3\rb &= \calZ\lb i_1,i_2,i_3\rb,\\
    R\lb \calZ\rb\in\R^{n_1\times n_2n_3}: R\lb\calZ\rb\lb i_1, i_2+n_2\lb i_3-1\rb\rb  &= \calZ\lb i_1,i_2,i_3\rb.
\end{align*}
\paragraph{$k$-th unfolding and interface matrices.}
The $k$-th unfolding of a tensor $\calX$ is a matrix of size $n_1\cdots n_k\times n_{k+1}\cdots n_d$, defined by
\begin{align*}
 \calX^{<k>}\lb i_1,\cdots,i_k; i_{k+1},\cdots,i_d\rb = \calX\lb i_1,\cdots,i_d\rb,
\end{align*}
where the semicolon represents the separation of the row and column indices: the first $k$ indices of $\calX$ enumerate the rows of $\calX^{<k>}$, and the last $d-k$ the columns of $\calX^{<k>}$. Additionally, a tensor $\calX$ with core tensors $\left\lbrace \calX^1,\cdots,\calX^d\right\rbrace$ can be split into left and right parts
\begin{align*}
    \calX^{\leq k}\in\R^{n_1n_2\cdots n_k\times r_k}: \calX^{\leq k}\lb i_1,i_2,\cdots,i_k;:\rb &= \calX^1\lb i_1\rb\calX^2\lb i_2\rb\cdots\calX^k\lb i_k\rb,\\
    \calX^{\geq k}\in\R^{n_kn_{k+1}\cdots n_d\times r_{k-1}}:\calX^{\geq k}\lb i_k,i_{k+1},\cdots,i_d;:\rb &= \lsb\calX^k\lb i_k\rb\calX^{k+1}\lb i_{k+1}\rb\cdots\calX^d\lb i_d\rb\rsb^\tran,
\end{align*}
the so-called interface matrices~\cite{steinlechner2016riemannian}. The following recursive relations between the interface matrices, which follow immediately from the definition, will be very useful: for $k = 1,\cdots,d$,
\begin{align}\label{eq: recursive relation}
    \calX^{\leq k} = \lb\mI_{n_k}\otimes \calX^{\leq k-1}\rb L\lb\calX^k\rb,\notag\\
    \calX^{\geq k} = \lb \calX^{\geq k+1}\otimes\mI_{n_k} \rb R\lb\calX^k\rb^\tran,
\end{align}
where $\mI_{n_k}$ is the identity matrix of size $n_k\times n_k$, $\otimes$ denotes the Kronecker product and $\calX^{\leq 0} = \calX^{\geq d+1} := 1$. With these notations,  the $k$-th unfolding of $\calX$ can be expressed as 
\begin{align}\label{eq: kth unfolding}
    \calX^{<k>} = \calX^{\leq k} {\calX^{\geq k+1}}^\tran = \lb\mI_{n_k}\otimes \calX^{\leq k-1}\rb L\lb\calX^k\rb{\calX^{\geq k+1}}^\tran = \calX^{\leq k}R\lb\calX^{k+1}\rb\lb {\calX^{\geq k+2}}^\tran\otimes\mI_{n_{k+1}} \rb .
\end{align}
We also define the interface matrices product as follows
\begin{align}\label{eq: interface matrix product}
    \mL^k &= {\calX^{\leq k}}^\tran \calX^{\leq k}\in\R^{r_k\times r_k},\\
    \mR^k & = {\calX^{\geq k}}^\tran\calX^{\geq k}\in\R^{r_{k-1}\times r_{k-1}}.
\end{align}

\paragraph{TT rank.}
The TT rank of a tensor $\calX$ is defined as the smallest $\lb 1,r_1,\cdots,r_{d-1},1\rb$ such that $\calX$ admits a TT decomposition~\eqref{eq: tt decomposition} with core tensors of size $r_{k-1}\times n_k\times r_k$, for $k = 1,\cdots,d$. 
  The TT rank $r_k$  is closely related to the rank of the $k$-th unfolding of  $\calX$.
More precisely, if a tensor can be decomposed as~\eqref{eq: tt decomposition}, it necessarily holds that  $r_k\geq \rank\lb\calX^{<k>}\rb$ \cite{uschmajew2020geometric}.  Furthermore, there exists a TT decomposition with $r_k = \rank\lb\calX^{<k>}\rb$. Consequently,
  $r_k$ is equal to the rank of $\calX^{<k>}$ and 
\begin{align*}
  \rank_{\TT}\lb\calX\rb = \vr = \lb 1,r_1,\cdots,r_{d-1},1\rb := \lb 1,\rank\lb\calX^{<1>}\rb,\cdots,\rank\lb\calX^{<d-1>}\rb,1\rb.   
\end{align*}

In addition to the basics of TT decomposition, we also need the following two notions in this paper:
\begin{itemize}
    \item The mode-$k$ product of a tensor $\calX\in\R^{n_1\times n_2\times\cdots\times n_d}$ with a matrix $\mA\in\R^{m_k\times n_k}$, denoted $\calX\times_k\mA$, yields a tensor of size $n_1\times\cdots\times m_k\times\cdots\times n_d$. The $\lb i_1,\cdots,j_k,\cdots,i_d\rb$-th entry of $\calX\times_k\mA$ is
\begin{align*}
    \lb\calX\times_k\mA\rb \lb i_1,\cdots,j_k,\cdots,i_d\rb = \sum_{i_k = 1}^{n_k}\calX\lb i_1,\cdots,i_k,\cdots,i_d\rb\mA\lb j_k,i_k\rb.
\end{align*}
\item The matricization operator for a tensor $\calX\in\R^{n_1\times\cdots\times n_d}$  is defined as
\begin{align*}
    \calM_k\lb\calX\rb\in\R^{n_k\times \prod_{j\neq k}n_j}:~\calM_k\lb\calX\rb\lb i_k,1+\sum_{\ell = 1,\ell\neq k}^n\lb i_{\ell}-1\rb J_{\ell}\rb = \calX\lb i_1,\cdots,i_d\rb,
\end{align*}
where $J_\ell = \prod_{j = 1,j\neq k}^{\ell-1}n_j$. It can be verified that we have the following two equations related to~\eqref{eq: kth unfolding}: for $k = 1,\cdots,d$,
\begin{align}\label{eq: matricization}
    \calM_3\lb \calX^k\times_1\calX^{\leq k-1}\times_3\calX^{\geq k+1}\rb &= \lb\lb\mI_{n_k}\otimes \calX^{\leq k-1}\rb L\lb\calX^k\rb{\calX^{\geq k+1}}^\tran\rb^\tran,\\
    \calM_2\lb \calX^k\times_1\calX^{\leq k-1}\times_3\calX^{\geq k+1}\rb &= \calM_2\lb\calX^k\rb\lb \calX^{\geq k+1}\otimes\calX^{\leq k-1}\rb^\tran.\notag
\end{align}
\end{itemize}


\subsection{Geometric Structure}\label{sec: gs}
Let $\calM_{\vr}$ be a set of fixed  tensor train rank tensors, that is
\begin{align}\label{eq: embedded manifold}
    \calM_{\vr} = \left\lbrace \calX\in\R^{n_1\times n_2\times\cdots\times n_d}:\rank_{\TT}\lb\calX\rb = \vr\right\rbrace.
\end{align}
For $\calM_{\vr}$ to be non-empty, the   necessary and sufficient conditions are
\begin{align*}
    r_{k-1}\leq n_kr_k, ~ r_k\leq n_kr_{k-1},~k = 1,\cdots,d,
\end{align*}
see for example~\cite[Section 9.3.3]{uschmajew2020geometric}.
 Moreover, it has been shown that the set $\calM_{\vr}$
forms a smooth embedded submanifold of dimension $\sum_{k= 1}^d r_{k-1}n_kr_k-\sum_{k = 1}^{d-1}r_k^2$~\cite{holtz2012manifolds}. 

To introduce the quotient geometry, 
assume $\calX$ is represented in the TT format~\eqref{eq: tt decomposition} 
 with core tensors $\bar{\calX}:=\left\lbrace \calX^1,\cdots,\calX^d\right\rbrace$. It is known that the condition $\rank_{\TT}\lb\calX\rb = \vr$ is equivalent to 
 $\rank\lb L\lb\calX^k\rb\rb = r_k$ and $\rank\lb R\lb\calX^k\rb\rb = r_{k-1}$, $k = 1,\cdots,d$~\cite{holtz2012manifolds,uschmajew2020geometric}.    We denote by $\overline{\calM}_{\vr} := \R_{\ast}^{r_0\times n_1\times r_1}\times\cdots\times\R_{\ast}^{r_{d-1}\times n_d\times r_d}$ the set of tensors with the following rank constraint: for any $\calX^k\in \R_{\ast}^{r_{k-1}\times n_k\times r_{k}}$, $\rank\lb L\lb\calX^k\rb\rb = r_k$, and $\rank\lb R\lb\calX^k\rb\rb = r_{k-1}$, $k = 1,\cdots,d$. Let the mapping $\phi$ be
\begin{align*}
    \phi: \overline{\calM}_{\vr}\rightarrow\calM_{\vr} : \bar{\calX} \rightarrow  \phi\lb\bar{\calX}\rb, \hbox{ such that } \phi\lb\bar{\calX}\rb\lb i_1,\cdots,i_d\rb = \calX^1\lb i_1\rb\calX^2\lb i_2\rb\cdots\calX^d\lb i_d\rb.
\end{align*}
It is not hard to see that the image of $\overline{\calM}_{\vr}$ under $\phi$ is $\calM_{\vr}$. 
     Let $\bar{\calX} = \left\lbrace \calX^1,\cdots,\calX^d\right\rbrace\in\overline{\calM}_{\vr}$ and $\bar{\calY} = \left\lbrace \calY^1,\cdots,\calY^d\right\rbrace\in\overline{\calM}_{\vr}$. Then by Proposition 7 in ~\cite{uschmajew2013geometry}, 
 $\phi\lb\bar{\calX}\rb = \phi\lb\bar{\calY}\rb$ if and only if there exist invertible matrices $\mA_1,\cdots,\mA_{d-1}$ of appropriate sizes such that
\begin{align}\label{eq: barX equivalent to barY}
    \bar{\calY}  = \left\lbrace\calX^1\times_3\mA_1^\tran,\calX^2\times_1\mA_1^{-1}\times_3\mA_2^\tran,\cdots,\calX^d\times_1\mA_{d-1}^{-1}\right\rbrace.
\end{align}
 We denote by  $\lsb\bar{\calX}\rsb$ the set  containing all points $\bar{\calX}\in\overline{\calM}_{\vr}$ that obeys $\phi\lb\bar{\calX}\rb = \calX$. This set is known as the equivalent class. Moreover, let $\calG$ be the Lie group
\begin{align*}
\calG = \left\{\mathrm{A} = \lb\mA_1,\cdots,\mA_{d-1}\rb:  \mA_k\in \gl(r_k),~\text{for}~ k = 1,\cdots,d-1 \right\},
\end{align*}
where $\gl(r_k)$ is the set of non-singular matrices of size $r_k\times r_k$, and define the quotient set
\begin{align}\label{eq: quotient manifold}
    \overline{\calM}_{\vr}/\calG &= \left\lbrace  \lsb\bar{\calX}\rsb: \bar{\calX}\in\overline{\calM}_{\vr}\right\rbrace.
\end{align}
As shown in~\cite{uschmajew2013geometry},  $\overline{\calM}_{\vr}/\calG$ is  a quotient manifold of dimension $\sum_{k= 1}^d r_{k-1}n_kr_k-\sum_{k = 1}^{d-1}r_k^2$.

The  natural projection $\pi$ which maps the element in the total space $\overline{\calM}_{\vr}$ to the quotient space $\overline{\calM}_{\vr}/\calG$ is defined as
\begin{align*}
    \pi: \overline{\calM}_{\vr}\rightarrow \overline{\calM}_{\vr}/\calG: \bar{\calX}\rightarrow \pi\lb\bar{\calX}\rb= \lsb\bar{\calX}\rsb.
\end{align*}
Then, it is evident that there is a bijective mapping $\Phi$ from   $\overline{\calM}_{\vr}/\calG$ to $\calM_{\vr}$ such that $\phi = \Phi\circ\pi$.
 The relations between the different geometric spaces are  summarized in the diagram below: 
 \begin{equation*}
  \begin{tikzpicture}{[baseline=\the\dimexpr\fontdimen22\textfont2\relax]
   \node      (quotient)                             {$\overline{\calM}_{\vr}/\calG$};
   \node      (total)  [above= of quotient]                             {$\overline{\calM}_{\vr}$};
    \node      (embedded)   [right= of quotient]                           {$\calM_{\vr}$};
  \draw[->] (total.south) -- node[right] {$\pi$} (quotient.north) ;
  \draw[->] (quotient.east) -- node[above] {$\Phi$}(embedded.west) ;
  \draw[->] (total.south east) -- node[right] {$~\phi:= \Phi\circ \pi$} (embedded.north west);
  }
  \end{tikzpicture}
\end{equation*}

\subsection{Main Contributions and Outline}
  For the low rank tensor completion problem in the tensor train format, several computational methods have been developed, including block coordinate descent~\cite{bengua2017efficient}, iterative hard thresholding~\cite{rauhut2017low}, gradient-based optimization~\cite{yuan2017completion}, Riemannian optimization~\cite{cai2022provable,steinlechner2016riemannian,wang2019tensor}. In this paper, we study this problem based on the quotient geometry under a specific metric.  The main contributions of this paper are summarized as follows:
\begin{itemize}
    \item 
    We extend the preconditioned metric from matrix  to tensor  and 
     exploit the first order and second order geometry of the quotient manifold $\overline{\calM}_{\vr}/\calG$ under this metric.  Even though the results are extensions from the matrix case,  the mathematical derivations are by no means trivial due to the complications of the tensor algebra. In particular, to compute the projection onto the horizontal space, we  have to solve a system of linear equations where the coefficient matrix is  symmetric and block tridiagonal. A fundamental contribution of this paper is  that  the positive definiteness of the  coefficient matrix has been established. 
    \item Riemannian optimization algorithms based on the quotient geometry are proposed, including  Riemannian gradient descent, Riemannian conjugate gradient, and  Riemannian Gauss-Newton.
    In particular, it has  been shown that the Riemannian Gauss-Newton method on the quotient geometry is equivalent to the Riemannian Gauss-Newton method on the embedded geometry with a specific retraction.
      The per iteration computational complexity of  the first order algorithms presented in this paper scales linearly in the dimension $d$, in the tensor size $n$ and in the sampling set size $|\Omega|$,  scales polynomially in the TT rank $\vr$. Overall, it is comparable with that  of the Riemannian conjugate gradient method proposed in \cite{steinlechner2016riemannian} based on the submanifold. Numerical experiments  demonstrate that the proposed algorithms for the tensor completion problem are competitive with other state-of-the-art algorithms in terms of recovery ability, convergence performance, and reconstruction quality.
\end{itemize}

The rest of this manuscript is outlined as follows. Section $2$ investigates the first order and second order geometry of $\overline{\calM}_{\vr}/\calG$ under the preconditioned metric. Riemannian gradient descent, Riemannian conjugate gradient, and  Riemannian Gauss-Newton methods are presented in Section $3$. 
Empirical performance evaluations of the algorithms are given in Section $4$. In Section $5$,  we conclude this paper with  some future research directions.

\section{Quotient Geometry under Preconditioned Metric}

\subsection{Preconditioned Metric and Horizontal Lift}

Recall that the total space $\overline{\calM}_{\vr}$ is defined as $\overline{\calM}_{\vr} = \R_{\ast}^{r_0\times n_1\times r_1}\times\cdots\times\R_{\ast}^{r_{d-1}\times n_d\times r_d}$.
The vertical space, denoted by $\calV_{\bar{\calX}}$,  is the tangent space  to the equivalent class $\lsb\bar{\calX}\rsb$ at $\bar{\calX}$. The expression for $\calV_{\bar{\calX}}$ is given in the following proposition.
\begin{proposition}\label{pro: vertical space}
The vertical space at $\bar{\calX}$ is
\begin{align*}
\calV_{\bar{\calX}} = \left\lbrace \left\lbrace \calX^1\times_3\mD_1^\tran,-\calX^2\times_1\mD_1+\calX^2\times_3\mD_2^\tran,\cdots,-\calX^d\times_1\mD_{d-1}\right\rbrace:  \mD_k\in\R^{r_k\times r_k}, k = 1,\cdots, d-1  \right\rbrace.
\end{align*}
\end{proposition}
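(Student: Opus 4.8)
The plan is to identify the equivalence class $\lsb\bar{\calX}\rsb$ with the orbit of $\bar{\calX}$ under the action of the Lie group $\calG$, and to compute its tangent space at $\bar{\calX}$ as the image of the differential, at the identity of $\calG$, of the orbit map.

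First I would recall from~\eqref{eq: barX equivalent to barY} that $\bar{\calY}\in\lsb\bar{\calX}\rsb$ if and only if
\begin{align*}
\bar{\calY} = \omega_{\bar{\calX}}(\mA_1,\ldots,\mA_{d-1}) := \lb\calX^1\times_3\mA_1^\tran,\ \calX^2\times_1\mA_1^{-1}\times_3\mA_2^\tran,\ \cdots,\ \calX^d\times_1\mA_{d-1}^{-1}\rb
\end{align*}
for some $(\mA_1,\ldots,\mA_{d-1})\in\calG$, so that $\omega_{\bar{\calX}}:\calG\to\overline{\calM}_{\vr}$ is a smooth parametrization of $\lsb\bar{\calX}\rsb$ with $\omega_{\bar{\calX}}(\mI,\ldots,\mI)=\bar{\calX}$. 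Since $\calG$ acts smoothly, $\omega_{\bar{\calX}}$ has constant rank and $\lsb\bar{\calX}\rsb$ is an immersed submanifold with $\calV_{\bar{\calX}} = T_{\bar{\calX}}\lsb\bar{\calX}\rsb = \mathrm{image}\,\diff(\omega_{\bar{\calX}})_{\mathrm{id}}$; this is the standard fact about orbit maps of Lie group actions and can be quoted from~\cite{uschmajew2013geometry}.

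Next I would compute $\diff(\omega_{\bar{\calX}})_{\mathrm{id}}$ explicitly. The tangent space of $\calG$ at the identity is $\R^{r_1\times r_1}\times\cdots\times\R^{r_{d-1}\times r_{d-1}}$; given $(\mD_1,\ldots,\mD_{d-1})$ there, choose smooth curves $\mA_k(t)$ with $\mA_k(0)=\mI$ and $\dot{\mA}_k(0)=\mD_k$. Using $\frac{\diff}{\diff t}\big|_{t=0}\mA_k(t)^\tran=\mD_k^\tran$ and $\frac{\diff}{\diff t}\big|_{t=0}\mA_k(t)^{-1}=-\mD_k$ together with bilinearity of the mode products and $\calX^k\times_1\mI=\calX^k\times_3\mI=\calX^k$, differentiating $\omega_{\bar{\calX}}(\mA_1(t),\ldots,\mA_{d-1}(t))$ core by core at $t=0$ yields $\calX^1\times_3\mD_1^\tran$ for the first core, $-\calX^k\times_1\mD_{k-1}+\calX^k\times_3\mD_k^\tran$ for $k=2,\ldots,d-1$, and $-\calX^d\times_1\mD_{d-1}$ for the last. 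These expressions are linear in $(\mD_1,\ldots,\mD_{d-1})$, and every direction in $T_{\mathrm{id}}\calG$ is realized this way, so the image is exactly the set claimed in the proposition.

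Finally, as a consistency check (not needed for the statement itself), the map $(\mD_1,\ldots,\mD_{d-1})\mapsto$ (the tuple of core perturbations) is injective: if $\calX^1\times_3\mD_1^\tran=0$ then $L(\calX^1)\mD_1^\tran=0$, and since $\rank L(\calX^1)=r_1$ this forces $\mD_1=0$; substituting back, the second component reduces to $\calX^2\times_3\mD_2^\tran=0$, i.e. $L(\calX^2)\mD_2^\tran=0$, whence $\mD_2=0$, and so on inductively. Hence $\dim\calV_{\bar{\calX}}=\sum_{k=1}^{d-1}r_k^2=\dim\calG$, as expected. The only genuinely non-routine point is the first step — that $T_{\bar{\calX}}\lsb\bar{\calX}\rsb$ equals the image of the orbit-map differential — which rests on the smoothness/constant-rank properties of the $\calG$-action established in~\cite{uschmajew2013geometry}; everything after that is a direct application of the differentiation rules for $\mA\mapsto\mA^{-1}$ and $\mA\mapsto\mA^\tran$ to the mode products.
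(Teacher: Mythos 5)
Your proposal is correct and follows essentially the same route as the paper, which itself just defers to~\cite[Section 4.3]{uschmajew2013geometry}: compute the vertical space as the image of the differential at the identity of the orbit map induced by the $\calG$-action~\eqref{eq: barX equivalent to barY}, using $\frac{\diff}{\diff t}\big|_{t=0}\mA_k(t)^{-1}=-\mD_k$ and the product rule on the mode products. One tiny slip in your optional consistency check: since $L(\calZ\times_3\mA)=L(\calZ)\mA^\tran$, the identity $\calX^1\times_3\mD_1^\tran=0$ gives $L(\calX^1)\mD_1=0$ rather than $L(\calX^1)\mD_1^\tran=0$; the full-column-rank argument and the conclusion $\mD_1=0$ are unaffected.
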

\begin{proof}
The proof follows from~\cite[Section 4.3]{uschmajew2013geometry}.
\end{proof}
Notice that the vertical space is a subspace of $T_{\bar{\calX}}\overline{\calM}_{\vr}$. The horizontal space, denoted by $\calH_{\bar{\calX}}$, is any  subspace of $T_{\bar{\calX}}\overline{\calM}_{\vr}$ that is complementary to $\calV_{\bar{\calX}}$. For the quotient manifold $\overline{\calM}_{\vr}/\calG$~\eqref{eq: quotient manifold}, any 
element $\bar{\xi}\in T_{\bar{\calX}}\overline{\calM}_{\vr}$  satisfying $\mathrm{D}\pi\lb\bar{\calX}\rb\lsb \bar{\xi}\rsb = \xi_{\lsb\bar{\calX}\rsb}$ can be seen as a representation of $\xi_{\lsb\bar{\calX}\rsb}$ where $\xi_{\lsb\bar{\calX}\rsb}\in T_{\lsb\bar{\calX} \rsb}\overline{\calM}_{\vr}/\calG$. Since the kernel of $\mathrm{D}\pi\lb\bar{\calX}\rb: T_{\bar{\calX}}\overline{\calM}_{\vr}\rightarrow T_{\lsb\bar{\calX} \rsb}\overline{\calM}_{\vr}/\calG$ is the vertical space $\calV_{\bar{\calX}}$, there are  infinitely many representations of $\xi_{\lsb\bar{\calX}\rsb}$ in $T_{\bar{\calX}}\overline{\calM}_{\vr}$.  Nevertheless, 
one can find a unique representation of $\xi_{\lsb\bar{\calX}\rsb}$ in horizontal space $\calH_{\bar{\calX}}$.   The tangent vector $\bar{\xi}\in \calH_{\bar{\calX}}$ satisfying $\mathrm{D}\pi\lb\bar{\calX}\rb\lsb \bar{\xi}\rsb = \xi_{\lsb\bar{\calX}\rsb}$ is called the horizontal lift of $\xi_{\lsb\bar{\calX}\rsb}$ at $\bar{\calX}$. Throughout this paper, the horizontal lift of $\xi_{\lsb\bar{\calX}\rsb}\in T_{\lsb \bar{\calX}\rsb}\overline{\calM}_{\vr}$ at $\bar{\calX}$ is denoted by $\bar{\xi}_{\bar{\calX}}$.

Regarding the horizontal space, a particular one by imposing orthogonal conditions is proposed in~\cite{uschmajew2013geometry}. This horizontal space has also been exploited  in~\cite{da2015optimization} for the development of Riemannian quotient algorithms. Moreover, given a Riemannian metric $\bar{g} \lb\cdot,\cdot\rb$, 
one can construct the horizontal space $\calH_{\bar{\calX}}$ which is  orthogonal complementary to the vertical space $\calV_{\bar{\calX}}$:
\begin{align}\label{eq: def of horizontal space}
    \calH_{\bar{\calX}} = \left\lbrace \bar{\xi}\in T_{\bar{\calX}}\overline{\calM}_{\vr}: \bar{g}_{\bar{\calX}}\lb \bar{\xi},\bar{\eta}\rb = 0,~\text{for all}~\bar{\eta}\in\calV_{\bar{\calX}}\right\rbrace.
\end{align}
In this manuscript, we consider a preconditioned metric which is extended from the matrix case~\cite{dong2022analysis,kasai2016low,mishra2012riemannian,mishra2016riemannian,zheng2022riemannian}. Let $\R_{\ast}^{n_1\times r}\times \R_{\ast}^{n_2\times r}$ be the space of matrices with full column rank $r$. Given $\bar{x} := \left\lbrace \mG,\mH\right\rbrace\in \R_{\ast}^{n_1\times r}\times \R_{\ast}^{n_2\times r}$, the preconditioned metric on the tangent space of $\R_{\ast}^{n_1\times r}\times \R_{\ast}^{n_2\times r}$ at $\bar{x}$ is defined as
\begin{align}\label{eq: premetric matrix}
    \bar{g}_{\bar{x}}\lb\bar{\xi},\bar{\eta}\rb  = \trace\lb{\xi^1}^\tran\eta^1\lb\mH^\tran\mH\rb\rb+\trace\lb {\xi^2}^\tran\eta^2\lb\mG^\tran\mG\rb\rb,
\end{align}
where $\bar{\xi} = \left\lbrace \xi^1,\xi^2\right\rbrace$ and $\bar{\eta} = \left\lbrace \eta^1,\eta^2\right\rbrace\in T_{\bar{x}}\R_{\ast}^{n_1\times r}\times \R_{\ast}^{n_2\times r}$. Under this metric, the Riemannian gradient of a function $\bar{f}$ at $\bar{x}$ is given by
\begin{align}\label{eq: riegrad general}
    \grad \bar{f}(\bar{x}) = \lb\nabla_{\mG}\bar{f}(\bar{x}) \lb\mH^\tran\mH\rb^{-1},\nabla_{\mH}\bar{f}(\bar{x}) \lb\mG^\tran\mG\rb^{-1} \rb,
\end{align}
where $\nabla_{\mG}\bar{f}(\bar{x})$ and $\nabla_{\mH}\bar{f}(\bar{x})$ are the Euclidean partial derivatives of $\bar{f}$. Equivalently, we rewrite~\eqref{eq: riegrad general} in a vectorization form as
\begin{align*}
    \vect\lb     \grad \bar{f}(\bar{x}) \rb &= \underbrace{\begin{bmatrix}
        \lb\mH^\tran\mH\rb^{-1}\otimes\mI_{n_1} &\bm{0}\\
        \bm{0} & \lb\mG^\tran\mG\rb^{-1}\otimes\mI_{n_2}
    \end{bmatrix}}_{=:\mJ^{-1}}\vect\lb \lb\nabla_{\mG}\bar{f}(\bar{x}),\nabla_{\mH}\bar{f}(\bar{x})\rb\rb.
\end{align*}
Thus,  the Riemannian gradient descent direction~\eqref{eq: riegrad general} can be viewed as an approximation of the Newton direction. The metric in~\eqref{eq: premetric matrix} is known as the preconditioned metric on $\R_{\ast}^{n_1\times r}\times \R_{\ast}^{n_2\times r}$.

 Note that it is not evident to extend the preconditioned metric from the form presented in \eqref{eq: premetric matrix} because there are tensor factors in the TT format. However, the following equivalent expression of \eqref{eq: premetric matrix} provides a more convenient form for the extension:
\begin{align*}
\bar{g}_{\bar{x}}\lb\bar{\xi},\bar{\eta}\rb = \left\langle \xi^1\mH^\tran,\eta^1\mH^\tran\right\rangle+\left\langle \mG{\xi^2}^\tran,\mG{\eta^2}^\tran\right\rangle.
\end{align*}
Basically, the precondition metric is given by replacing each factors in $\left\langle \mG\mH^\tran,\mG\mH^\tran\right\rangle$ by the corresponding tangent vectors in the same mode. This observation leads to the following generalization in the tensor case.
\begin{definition}
    Given $\bar{\calX} = \left\lbrace\calX^1,\cdots,\calX^d\right\rbrace\in \overline{\calM}_{\vr}$, the preconditioned metric $\bar{g}_{\bar{\calX}}: T_{\bar{\calX}}\overline{\calM}_{\vr}\times T_{\bar{\calX}}\overline{\calM}_{\vr}$ is defined as follows:
\begin{align}\label{eq: premetric}
    \bar{g}_{\bar{\calX}}\lb \bar{\xi},\bar{\eta}\rb &= \sum_{k = 1}^d\left\langle \phi\lb \left\lbrace \calX^1,\cdots,\calX^{k-1},\xi^k,\calX^{k+1},\cdots,\calX^d\right\rbrace\rb, \phi\lb \left\lbrace \calX^1,\cdots,\calX^{k-1},\eta^k,\calX^{k+1},\cdots,\calX^d\right\rbrace\rb \right\rangle,
\end{align}
    where $\bar{\xi}:= \left\lbrace \xi^k\right\rbrace_{k = 1}^d,\bar{\eta}:= \left\lbrace \eta^k\right\rbrace_{k = 1}^d \in T_{\bar{\calX}}\overline{\calM}_{\vr}$.
\end{definition}
\begin{lemma}
    The preconditioned metric defined in~\eqref{eq: premetric} has  the following equivalent expression:
    \begin{align*}
        \bar{g}_{\bar{\calX}}\lb \bar{\xi},\bar{\eta}\rb = \sum_{k = 1}^d \trace\lb\calM_2\lb\xi^k\rb^\tran\calM_{2}\lb\eta^k\rb\lb{\mH^k}^\tran\mH^k\rb\rb,
    \end{align*}
    which coincides with  that in~\eqref{eq: premetric matrix}. Here  $\mH^k = \calX^{\geq k+1}\otimes\calX^{\leq k-1}$, for $k = 1,\cdots,d$.
\end{lemma}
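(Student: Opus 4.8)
The plan is to reduce each summand in the definition~\eqref{eq: premetric} to a matrix Frobenius inner product by means of the matricization identities already recorded in~\eqref{eq: matricization}, and then simplify with the cyclic invariance of the trace. The pivotal step is the identification
$
\phi(\{\calX^1,\dots,\calX^{k-1},\xi^k,\calX^{k+1},\dots,\calX^d\}) = \xi^k\times_1\calX^{\leq k-1}\times_3\calX^{\geq k+1}
$
obtained after reshaping the $d$-dimensional tensor on the left into the third order tensor whose index blocks are $(i_1,\dots,i_{k-1})$, $i_k$, and $(i_{k+1},\dots,i_d)$; the analogous identity holds with $\xi^k$ replaced by $\eta^k$.

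I would first establish this identification directly from the definitions. The $(i_1,\dots,i_d)$-th entry of $\phi(\{\calX^1,\dots,\xi^k,\dots,\calX^d\})$ is $\calX^1(i_1)\cdots\calX^{k-1}(i_{k-1})\,\xi^k(i_k)\,\calX^{k+1}(i_{k+1})\cdots\calX^d(i_d)$, and by the definition of the interface matrices one has $\calX^1(i_1)\cdots\calX^{k-1}(i_{k-1}) = \calX^{\leq k-1}(i_1,\dots,i_{k-1};:)$ and $\calX^{k+1}(i_{k+1})\cdots\calX^d(i_d) = \calX^{\geq k+1}(i_{k+1},\dots,i_d;:)^\tran$; substituting and writing the sum over the bond indices $\ell_{k-1},\ell_k$ explicitly shows that this is exactly a mode-$1$ contraction of $\xi^k$ with $\calX^{\leq k-1}$ followed by a mode-$3$ contraction with $\calX^{\geq k+1}$ (one may instead telescope the recursions~\eqref{eq: recursive relation}). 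Then applying the second identity in~\eqref{eq: matricization} with $\calX^k$ replaced by $\xi^k$ gives $\calM_2(\xi^k\times_1\calX^{\leq k-1}\times_3\calX^{\geq k+1}) = \calM_2(\xi^k){\mH^k}^\tran$ with $\mH^k = \calX^{\geq k+1}\otimes\calX^{\leq k-1}$, and likewise for $\eta^k$. Since matricization (indeed any reshaping) preserves the Euclidean inner product of tensors, the $k$-th term of~\eqref{eq: premetric} equals $\langle \calM_2(\xi^k){\mH^k}^\tran,\ \calM_2(\eta^k){\mH^k}^\tran\rangle = \trace(\mH^k\calM_2(\xi^k)^\tran\calM_2(\eta^k){\mH^k}^\tran) = \trace(\calM_2(\xi^k)^\tran\calM_2(\eta^k)({\mH^k}^\tran\mH^k))$ by cyclicity of the trace; summing over $k=1,\dots,d$ yields the claimed formula.

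To check that this coincides with~\eqref{eq: premetric matrix}, I would specialize to $d=2$: then $r_0=r_2=1$, and after the trivial reshapings $\calX^{\leq 1}=\mG$, $\calX^{\geq 2}=\mH$, $\calM_2(\xi^1)=\xi^1$, $\calM_2(\xi^2)=\xi^2$, so $\mH^1=\mH$ and $\mH^2=\mG$, recovering $\trace({\xi^1}^\tran\eta^1(\mH^\tran\mH))+\trace({\xi^2}^\tran\eta^2(\mG^\tran\mG))$. The main obstacle is the first step: matching the multi-index conventions of $\phi$ against the mode-product and interface-matrix definitions so that the grouping of indices is precisely the one used by $\calM_2$ in~\eqref{eq: matricization}; once that bookkeeping is pinned down, the remainder is a one-line trace manipulation.
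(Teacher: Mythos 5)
Your proposal is correct and follows essentially the same path as the paper: rewrite each summand as $\langle \xi^k\times_1\calX^{\leq k-1}\times_3\calX^{\geq k+1},\,\eta^k\times_1\calX^{\leq k-1}\times_3\calX^{\geq k+1}\rangle$, apply the second identity in~\eqref{eq: matricization} to obtain $\calM_2(\cdot)\,{\mH^k}^\tran$, and finish with cyclicity of the trace. The only cosmetic difference is that the paper reaches the mode-product form by invoking the explicit matrix formula~\eqref{eq: kth unfolding} together with the first identity in~\eqref{eq: matricization}, whereas you verify the mode-product identification directly at the level of entries and invoke invariance of the Frobenius inner product under reshaping; both are valid and land on the same line. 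Your added $d=2$ sanity check against~\eqref{eq: premetric matrix} is not in the paper's proof but is a reasonable way to justify the last clause of the lemma.
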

\begin{proof}
    The application of~\eqref{eq: kth unfolding} yields that
    \begin{align*}
    \bar{g}_{\bar{\calX}}\lb \bar{\xi},\bar{\eta}\rb &= \sum_{k = 1}^d\left\langle \phi\lb \left\lbrace \calX^1,\cdots,\calX^{k-1},\xi^k,\calX^{k+1},\cdots,\calX^d\right\rbrace\rb, \phi\lb \left\lbrace \calX^1,\cdots,\calX^{k-1},\eta^k,\calX^{k+1},\cdots,\calX^d\right\rbrace\rb \right\rangle,\\
 & = \sum_{k = 1}^d\left\langle 
    \lb \mI_{n_k}\otimes \calX^{\leq k-1}\rb L\lb\xi^k\rb{\calX^{\geq k+1}}^\tran,
    \lb \mI_{n_k}\otimes \calX^{\leq k-1}\rb L\lb\eta^k\rb{\calX^{\geq k+1}}^\tran
 \right\rangle\notag\\
 & = \sum_{k = 1}^d\left\langle  \xi^k\times_1\calX^{\leq k-1}\times_3\calX^{\geq k+1},\eta^k\times_1\calX^{\leq k-1}\times_3\calX^{\geq k+1}\right\rangle\\
 & = \sum_{k = 1}^d\left\langle  \calM_2\lb\xi^k\rb{\mH^k}^\tran, \calM_2\lb\eta^k\rb{\mH^k}^\tran\right\rangle\\
 & =  \sum_{k = 1}^d \trace\lb\calM_2\lb\xi^k\rb^\tran\calM_{2}\lb\eta^k\rb\lb{\mH^k}^\tran\mH^k\rb\rb,
\end{align*}
where the second and third equations follow from~\eqref{eq: matricization}.
\end{proof}
\begin{proposition}
    Under the preconditioned metric $\bar{g}_{\bar{\calX}}$ defined in~\eqref{eq: premetric},  the horizontal space at $\bar{\calX}$ is
    \begin{align*}
        \calH_{\bar{\calX}} &= \bigg\{  \bar{\xi}\in T_{\bar{\calX}}\overline{\calM}_{\vr}: L\lb\calX^k\rb^\tran\lb\mI_{n_k}\otimes \mL^{k-1}\rb L\lb\xi^{k}\rb\mR^{k+1} \\
        &\qquad\qquad\qquad \qquad=\mL^{k}R\lb\xi^{k+1}\rb\lb \mR^{k+2}\otimes\mI_{n_{k+1}} \rb R\lb\calX^{k+1}\rb^\tran, k = 1,\cdots,d-1\bigg\}.
    \end{align*}
\end{proposition}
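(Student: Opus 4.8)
The plan is to read $\calH_{\bar\calX}$ off directly from its definition~\eqref{eq: def of horizontal space} as the orthogonal complement of the vertical space $\calV_{\bar\calX}$ with respect to the preconditioned metric $\bar g_{\bar\calX}$ of~\eqref{eq: premetric}, using the explicit description of $\calV_{\bar\calX}$ from Proposition~\ref{pro: vertical space}. Since each factor $\R_{\ast}^{r_{k-1}\times n_k\times r_k}$ is an open subset of $\R^{r_{k-1}\times n_k\times r_k}$, the total space $\overline{\calM}_{\vr}$ is open, so a tangent vector $\bar\xi = \{\xi^k\}_{k=1}^d$ at $\bar\calX$ ranges over arbitrary tuples of core-shaped tensors, and $\bar\xi\in\calH_{\bar\calX}$ holds if and only if $\bar g_{\bar\calX}(\bar\xi,\bar\eta) = 0$ for every $\bar\eta = \{-\calX^k\times_1\mD_{k-1}+\calX^k\times_3\mD_k^\tran\}_{k=1}^d\in\calV_{\bar\calX}$, where $\mD_1,\dots,\mD_{d-1}$ are arbitrary and the terms involving $\mD_0$ and $\mD_d$ are understood to be absent.

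For $\zeta\in\R^{r_{k-1}\times n_k\times r_k}$, write $\phi_k(\zeta)$ for $\phi$ evaluated at $\bar\calX$ with its $k$-th core replaced by $\zeta$, so that by multilinearity $\phi_k$ is linear and $\bar g_{\bar\calX}(\bar\xi,\bar\eta) = \sum_{k=1}^d\langle\phi_k(\xi^k),\phi_k(\eta^k)\rangle$ by~\eqref{eq: premetric}. First I would record the elementary identity
\[
\phi_k(\calX^k\times_3\mD^\tran) = \phi_{k+1}(\calX^{k+1}\times_1\mD), \qquad \mD\in\R^{r_k\times r_k},
\]
which says that a gauge matrix may be moved across the bond joining cores $k$ and $k+1$ without changing the tensor, and which follows from $(\calX^k\times_3\mD^\tran)(i_k) = \calX^k(i_k)\mD$ and $(\calX^{k+1}\times_1\mD)(i_{k+1}) = \mD\,\calX^{k+1}(i_{k+1})$. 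Combining this with linearity, $\phi_k(\eta^k) = -\phi_k(\calX^k\times_1\mD_{k-1}) + \phi_{k+1}(\calX^{k+1}\times_1\mD_k)$, and a telescoping reindexing in $k$ collapses the double sum into
\begin{align*}
\bar g_{\bar\calX}(\bar\xi,\bar\eta) = \sum_{k=1}^{d-1}\Big(&\big\langle \phi_k(\xi^k),\, \phi_{k+1}(\calX^{k+1}\times_1\mD_k)\big\rangle \\
&\quad- \big\langle \phi_{k+1}(\xi^{k+1}),\, \phi_{k+1}(\calX^{k+1}\times_1\mD_k)\big\rangle\Big),
\end{align*}
where the boundary contributions at $k=1$ and $k=d$ drop out so that exactly $d-1$ terms survive.

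Next I would evaluate each of the two inner products by passing to the $k$-th unfolding and invoking the recursive relations~\eqref{eq: recursive relation}--\eqref{eq: kth unfolding}. Replacing the $k$-th core by $\xi^k$ gives $\phi_k(\xi^k)^{<k>} = (\mI_{n_k}\otimes\calX^{\leq k-1})L(\xi^k)(\calX^{\geq k+1})^\tran$; replacing the $(k+1)$-st core by $\calX^{k+1}\times_1\mD_k$ gives $\phi_{k+1}(\calX^{k+1}\times_1\mD_k)^{<k>} = \calX^{\leq k}\mD_k(\calX^{\geq k+1})^\tran$; and replacing the $(k+1)$-st core by $\xi^{k+1}$ gives $\phi_{k+1}(\xi^{k+1})^{<k>} = \calX^{\leq k}R(\xi^{k+1})({\calX^{\geq k+2}}^\tran\otimes\mI_{n_{k+1}})$. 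Expanding the resulting Frobenius inner products with $\mL^{k} = (\calX^{\leq k})^\tran\calX^{\leq k}$, $\mR^{k+1} = (\calX^{\geq k+1})^\tran\calX^{\geq k+1}$, the identity $(\calX^{\leq k})^\tran(\mI_{n_k}\otimes\calX^{\leq k-1}) = L(\calX^k)^\tran(\mI_{n_k}\otimes\mL^{k-1})$ (a direct consequence of~\eqref{eq: recursive relation}), $({\calX^{\geq k+2}}^\tran\otimes\mI_{n_{k+1}})(\calX^{\geq k+2}\otimes\mI_{n_{k+1}}) = \mR^{k+2}\otimes\mI_{n_{k+1}}$, and cyclicity and transposition invariance of the trace, the $k$-th summand becomes $\big\langle \mD_k,\ \mathrm{LHS}_k - \mathrm{RHS}_k\big\rangle$ with
\begin{align*}
\mathrm{LHS}_k &= L(\calX^k)^\tran(\mI_{n_k}\otimes\mL^{k-1})L(\xi^k)\mR^{k+1}, \\
\mathrm{RHS}_k &= \mL^k R(\xi^{k+1})(\mR^{k+2}\otimes\mI_{n_{k+1}})R(\calX^{k+1})^\tran.
\end{align*}
Since $\mD_1,\dots,\mD_{d-1}$ are arbitrary and independent, $\bar g_{\bar\calX}(\bar\xi,\bar\eta) = 0$ for all $\bar\eta\in\calV_{\bar\calX}$ precisely when $\mathrm{LHS}_k = \mathrm{RHS}_k$ for every $k=1,\dots,d-1$, which is the claimed description of $\calH_{\bar\calX}$. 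I expect the only real difficulty to be bookkeeping: keeping the Kronecker-product and unfolding identities and the transposes straight throughout the trace manipulations, and checking that the telescoping in the second step handles the degenerate boundary objects ($\mD_0,\mD_d,\calX^{\leq 0},\calX^{\geq d+1},\mL^0,\mR^{d+1}$) correctly so that exactly the $d-1$ stated equations — and no spurious ones — are produced.
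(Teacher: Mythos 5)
Your proof is correct and follows essentially the same path as the paper's: unwind the orthogonality condition using Proposition~2.1, telescope the double sum into $d-1$ terms using the gauge identity (your $\phi_{k+1}(\calX^{k+1}\times_1\mD_k)$ is literally the paper's $\phi(\{\calX^1,\dots,\calX^k\times_3\mD_k^\tran,\dots,\calX^d\})$), then pass to the $k$-th unfolding and use the recursive interface relations to express each summand as $\langle \mD_k, \mathrm{LHS}_k - \mathrm{RHS}_k\rangle$, from which arbitrariness of the $\mD_k$ gives the $d-1$ defining equations. The bookkeeping you flag as a risk (boundary cases, Kronecker/trace manipulations) is exactly the content of the paper's chain of equalities, and your checks land on the same formulas.
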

\begin{proof}
    By the definition of the horizontal space in~\eqref{eq: def of horizontal space}, for  a tangent vector $\bar{\xi}\in \calH_{\bar{\calX}}$, we must have $\bar{g}_{\bar{\calX}}\lb\bar{\xi},\bar{\eta}\rb = 0$ for all $\bar{\eta}\in \calV_{\bar{\calX}}$. 
    With the application of Proposition~\ref{pro: vertical space},
    the equation $\bar{g}_{\bar{\calX}}\lb\bar{\xi},\bar{\eta}\rb = 0$ reduces to
    \begin{align*}
        0&=\sum_{k = 1}^d\left\langle \phi\lb \left\lbrace \calX^1,\cdots,\xi^k,\cdots,\calX^d\right\rbrace\rb, \phi\lb \left\lbrace \calX^1,\cdots,\eta^k,\cdots,\calX^d\right\rbrace\rb \right\rangle\\
        &  = \left\langle \phi\lb \left\lbrace \xi^1,\cdots,\calX^d\right\rbrace\rb, \phi\lb \left\lbrace \calX^1\times_3\mD_1^\tran,\cdots,\calX^d\right\rbrace\rb \right\rangle+\left\langle \phi\lb \left\lbrace \calX^1,\cdots,\xi^d\right\rbrace\rb, \phi\lb \left\lbrace \calX^1,\cdots,-\calX^d\times_1\mD_{d-1}\right\rbrace\rb \right\rangle \\
        &\quad+\sum_{k = 2}^{d-1}\left\langle \phi\lb \left\lbrace \calX^1,\cdots,\xi^k,\cdots,\calX^d\right\rbrace\rb, \phi\lb \left\lbrace \calX^1,\cdots,-\calX^k\times_1\mD_{k-1}+\calX^k\times_3\mD_k^\tran,\cdots,\calX^d\right\rbrace\rb \right\rangle \\
        & = \sum_{k =1}^{d-1} \left\langle \phi\lb \left\lbrace \calX^1,\cdots,\xi^{k},\cdots,\calX^d\right\rbrace\rb- \phi\lb \left\lbrace \calX^1,\cdots,\xi^{k+1},\cdots,\calX^d\right\rbrace\rb, \phi\lb \left\lbrace \calX^1,\cdots,\calX^{k}\times_3\mD_{k}^\tran,\cdots,\calX^d\right\rbrace\rb \right\rangle\\
        & = \sum_{k= 1}^{d-1} \left\langle  \lb\mI_{n_k}\otimes \calX^{\leq k-1}\rb L\lb\xi^{k}\rb{\calX^{\geq k+1}}^\tran -\calX^{\leq k}R\lb\xi^{k+1}\rb\lb {\calX^{\geq k+2}}^\tran\otimes\mI_{n_{k+1}}\rb ,  \calX^{\leq k} \mD_{k}{\calX^{\geq k+1}}^\tran\right\rangle\\
        & = \sum_{k= 1}^{d-1} \left\langle  {\calX^{\leq k}}^\tran\lb\lb\mI_{n_k}\otimes \calX^{\leq k-1}\rb L\lb\xi^{k}\rb{\calX^{\geq k+1}}^\tran -\calX^{\leq k}R\lb\xi^{k+1}\rb\lb {\calX^{\geq k+2}}^\tran\otimes\mI_{n_{k+1}}\rb\rb\calX^{\geq k+1} ,   \mD_{k}\right\rangle\\
        &=\sum_{k= 1}^{d-1} \left\langle  L\lb\calX^k\rb^\tran\lb\mI_{n_k}\otimes \mL^{k-1}\rb L\lb\xi^{k}\rb\mR^{k+1} -\mL^{k}R\lb\xi^{k+1}\rb\lb \mR^{k+2}\otimes\mI_{n_{k+1}} \rb R\lb\calX^{k+1}\rb^\tran ,   \mD_{k}\right\rangle
    \end{align*}
    where the fourth equation is due to~\eqref{eq: kth unfolding} and the last line follows from~\eqref{eq: recursive relation} and~\eqref{eq: interface matrix product}. Since $\mD_{k}\in\R^{r_k\times r_k}$ is an arbitrary matrix, one can conclude that for $k = 1, \cdots,d-1$,
    \begin{align}\label{eq: horizontal equation}      L\lb\calX^k\rb^\tran\lb\mI_{n_k}\otimes \mL^{k-1}\rb L\lb\xi^{k}\rb\mR^{k+1} =\mL^{k}R\lb\xi^{k+1}\rb\lb \mR^{k+2}\otimes\mI_{n_{k+1}} \rb R\lb\calX^{k+1}\rb^\tran,
    \end{align}
    which completes the proof.
\end{proof}

The projections of any $\bar{\xi} = \left\lbrace \xi^1,\cdots,\xi^d\right\rbrace\in T_{\bar{\calX}}\overline{\calM}_{\vr}$ onto the vertical and horizontal spaces, denoted $\calP_{\bar{\calX}}^{\calV}\lb\bar{\xi}\rb$ and $\calP_{\bar{\calX}}^{\calH}\lb\bar{\xi}\rb$,  are given by the following lemma.
\begin{lemma}
Under the preconditioned metric $\bar{g}_{\bar{\calX}}$ defined in~\eqref{eq: premetric},  the projections onto the vertical  and horizontal spaces are given by
\begin{align}\label{eq: horizontal projection}
\calP_{\bar{\calX}}^{\calV} \lb\bar{\xi}\rb &= \lb\calX^1\times_3\mD_1^\tran, -\calX^2\times_1\mD_1+\calX^2\times_3\mD_2^\tran,\cdots, -\calX^{d}\times_1\mD_{d-1}\rb,\notag\\
\calP_{\bar{\calX}}^{\calH} \lb\bar{\xi}\rb &= \lb  \xi^1-\calX^1\times_3\mD_1^\tran,\xi^2+\calX^2\times_1\mD_1-\calX^2\times_3\mD_2^\tran,\cdots,\xi^d+\calX^{d}\times_1 \mD_{d-1}\rb.
\end{align}
where $\mD_k\in\R^{r_k\times r_k}$ are uniquely determined by  the following system of linear equations
\begin{align}\label{eq: linear equation}
\begin{bmatrix}
\mA_1 &\mB_1\\
\mB_1^\tran &\mA_2&\mB_2\\
&\ddots&\ddots&\ddots\\
& &\mB_{d-2}^\tran&\mA_{d-1}
\end{bmatrix}
\begin{bmatrix}
\vect(\mD_1)\\
\vect(\mD_2)\\
\vdots\\
\vect(\mD_{d-1})
\end{bmatrix} = \begin{bmatrix}
\vb_1\\
\vb_2\\
\vdots\\
\vb_{d-1}
\end{bmatrix}.
\end{align}
Here, the matrices $\mA_k$,$\mB_k$ and $\vb_k$ are 
\begin{align*}
    \mA_k &= \mR^{k+1}\otimes\mL^k,\\
    \mB_k &= -\frac{1}{2}  \lb R\lb\calX^{k+1}\rb\otimes\mL^k\rb \lb\mR^{k+2}\otimes L\lb\calX^{k+1}\rb\rb,\\
    \vb_k&= \frac{1}{2} \lb\lb\mR^{k+1}\otimes L\lb\calX^k\rb^\tran \rb \vect\lb \xi^k\times_1 \mL^{k-1} \rb-\lb 
 R\lb\calX^{k+1}\rb\otimes\mL^k\rb\vect\lb \xi^{k+1}\times_3\mR^{k+2}\rb\rb.
\end{align*}
\end{lemma}
\begin{proof}
    Since $\calP_{\bar{\calX}}^{\calH} \lb\bar{\xi}\rb\in\calH_{\bar{\calX}}$, the elements of  $\calP_{\bar{\calX}}^{\calH} \lb\bar{\xi}\rb$ must satisfy the equation~\eqref{eq: horizontal equation}. Substituting the $k$-th core tensor of 
 $\calP_{\bar{\calX}}^{\calH} \lb\bar{\xi}\rb$ into~\eqref{eq: horizontal equation} yields that
\begin{align*}
\mL^k\mD_k\mR^{k+1} &= \frac{1}{2} L\lb\calX^k\rb^\tran\lb\mI_{n_k}\otimes\mL^{k-1}\rb L\lb\xi^k+\calX^k\times_1\mD_{k-1} \rb\mR^{k+1}\\
&\quad-\frac{1}{2}\mL^k R\lb\xi^{k+1}-\calX^{k+1}\times_3\mD_{k+1}^\tran\rb\lb\mR^{k+2}\otimes\mI_{n_{k+1}}\rb R\lb\calX^{k+1}\rb^\tran.
\end{align*}
Vectorizing both sides of this equation gives that
\begin{align*}
    \lb\mR^{k+1}\otimes\mL^k\rb\vect\lb\mD_k\rb &= \frac{1}{2}\lb\mR^{k+1}\otimes L\lb\calX^k\rb^\tran \rb\vect\lb \lb\mI_{n_k}\otimes\mL^{k-1}\rb L\lb\xi^k+\calX^k \times_1\mD_{k-1} \rb \rb\\
    &\quad-\frac{1}{2}\lb 
 R\lb\calX^{k+1}\rb\otimes\mL^k\rb\vect\lb R\lb\xi^{k+1}-\calX^{k+1}\times_3\mD_{k+1}^\tran\rb\lb\mR^{k+2}\otimes\mI_{n_{k+1}}\rb \rb\\
 &= \frac{1}{2}\lb\mR^{k+1}\otimes L\lb\calX^k\rb^\tran \rb\vect\lb \xi^k\times_1\mL^{k-1}+\calX^k\times_1\lb\mL^{k-1}\mD_{k-1}\rb \rb\\
 &\quad - \frac{1}{2}\lb 
 R\lb\calX^{k+1}\rb\otimes\mL^k\rb\vect\lb \xi^{k+1}\times_3\mR^{k+2}-\calX^{k+1}\times_3\lb\mR^{k+2}\mD_{k+1}^\tran\rb \rb\\
 & = \frac{1}{2}\lb\mR^{k+1}\otimes L\lb\calX^k\rb^\tran \rb\lb R\lb \calX^k\rb^\tran\otimes \mL^{k-1}\rb\vect\lb\mD_{k-1}\rb\\
 &\quad +\frac{1}{2}\lb R\lb\calX^{k+1}\rb\otimes\mL^k\rb \lb\mR^{k+2}\otimes L\lb\calX^{k+1}\rb\rb\vect\lb\mD_{k+1}\rb\\
 &\quad +\frac{1}{2} \lb\mR^{k+1}\otimes L\lb\calX^k\rb^\tran \rb \vect\lb  \xi^k\times_1\mL^{k-1}\rb\\
 &\quad-\frac{1}{2}\lb 
 R\lb\calX^{k+1}\rb\otimes\mL^k\rb\vect\lb \xi^{k+1}\times_3\mR^{k+2}\rb,
\end{align*}
where the second equation is due to~\eqref{eq: matricization}. Thus   one can obtain $d-1$  equations for $k = 1,\cdots,d-1$. Stacking them together yields the system of linear equations~\eqref{eq: linear equation}. The existence and uniqueness of projection implies the invertibility of the  coefficient matrix in~\eqref{eq: linear equation}. As a result, $\mD_k\in\R^{r_k\times r_k}$ can be uniquely obtained by solving~\eqref{eq: linear equation}.
\end{proof}
  Moreover, it can be  shown that the coefficient matrix in~\eqref{eq: linear equation} is positive definite.
\begin{lemma}\label{lem: positive definite}
     The symmetric block tridiagonal matrix  in~\eqref{eq: linear equation} is positive definite.
\end{lemma}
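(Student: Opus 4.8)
The plan is to recognize the symmetric block tridiagonal matrix in~\eqref{eq: linear equation}, which I denote by $\mM$, as one half of the Gram matrix of the vertical space $\calV_{\bar{\calX}}$ with respect to $\bar{g}_{\bar{\calX}}$; its positive definiteness then reduces to the positive definiteness of $\bar{g}_{\bar{\calX}}$ together with the injectivity of the natural coordinatization of $\calV_{\bar{\calX}}$. Fix $\bar{\vv}=(\vv_1,\dots,\vv_{d-1})$ with $\vv_k\in\R^{r_k\times r_k}$ and the convention $\vv_0=\vv_d=\bzero$, let $\bar{\zeta}(\bar{\vv})=\left\lbrace\calX^1\times_3\vv_1^\tran,-\calX^2\times_1\vv_1+\calX^2\times_3\vv_2^\tran,\dots,-\calX^d\times_1\vv_{d-1}\right\rbrace\in\calV_{\bar{\calX}}$ be the corresponding vertical vector (Proposition~\ref{pro: vertical space}), and let $\vw$ be the vector stacking $\vect(\vv_1),\dots,\vect(\vv_{d-1})$. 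The target identity is $\vw^\tran\mM\,\vw=\tfrac12\,\bar{g}_{\bar{\calX}}(\bar{\zeta}(\bar{\vv}),\bar{\zeta}(\bar{\vv}))$.

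First I would record two ingredients. (i) $\bar{g}_{\bar{\calX}}$ is positive definite: by the expression $\bar{g}_{\bar{\calX}}(\bar{\eta},\bar{\eta})=\sum_{k=1}^d\fronorm{\calM_2(\eta^k)(\mH^k)^\tran}^2$ obtained above, with $\mH^k=\calX^{\geq k+1}\otimes\calX^{\leq k-1}$; since $\calX^{\geq k+1}$ and $\calX^{\leq k-1}$ have full column rank whenever $\bar{\calX}\in\overline{\calM}_{\vr}$ (an easy induction via~\eqref{eq: recursive relation} using the rank constraints in the definition of $\overline{\calM}_{\vr}$), $\mH^k$ has full column rank, so $\bar{g}_{\bar{\calX}}(\bar{\eta},\bar{\eta})=0$ forces $\calM_2(\eta^k)=0$, i.e.\ $\eta^k=0$, for every $k$. (ii) $\bar{\vv}\mapsto\bar{\zeta}(\bar{\vv})$ is injective: if $\bar{\zeta}(\bar{\vv})=0$, its first core $\calX^1\times_3\vv_1^\tran=0$ gives $L(\calX^1)\vv_1=0$, hence $\vv_1=0$ since $L(\calX^1)$ has full column rank; inductively, once $\vv_{k-1}=0$ the $k$-th core reduces to $\calX^k\times_3\vv_k^\tran$, so $L(\calX^k)\vv_k=0$ and $\vv_k=0$. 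Granting the target identity, (i) and (ii) yield $\vw^\tran\mM\,\vw=\tfrac12\,\bar{g}_{\bar{\calX}}(\bar{\zeta}(\bar{\vv}),\bar{\zeta}(\bar{\vv}))\geq 0$, with equality only when $\bar{\zeta}(\bar{\vv})=0$, i.e.\ only when $\bar{\vv}=0$; hence $\mM$ is positive definite.

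It then remains to prove the target identity. Introduce $\calU_k:=\phi(\left\lbrace\calX^1,\dots,\calX^{k-1},\calX^k\times_3\vv_k^\tran,\calX^{k+1},\dots,\calX^d\right\rbrace)$ for $1\leq k\leq d-1$ and set $\calU_0=\calU_d:=0$. A slicewise computation shows that replacing core $k$ of $\bar{\calX}$ by $\calX^k\times_1\vv_{k-1}$ produces the same tensor as replacing core $k-1$ by $\calX^{k-1}\times_3\vv_{k-1}^\tran$ (both have $(i_1,\dots,i_d)$-entry $\cdots\calX^{k-1}(i_{k-1})\vv_{k-1}\calX^k(i_k)\cdots$); hence, by linearity of $\phi$ in each core and the form of the cores of $\bar{\zeta}(\bar{\vv})$ in Proposition~\ref{pro: vertical space}, $\phi$ applied to $\bar{\calX}$ with its $k$-th core replaced by the $k$-th core of $\bar{\zeta}(\bar{\vv})$ equals $\calU_k-\calU_{k-1}$. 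Then~\eqref{eq: premetric} gives
\[
\bar{g}_{\bar{\calX}}(\bar{\zeta}(\bar{\vv}),\bar{\zeta}(\bar{\vv}))=\sum_{k=1}^d\fronorm{\calU_k-\calU_{k-1}}^2=2\sum_{k=1}^{d-1}\fronorm{\calU_k}^2-2\sum_{k=1}^{d-2}\left\langle\calU_{k+1},\calU_k\right\rangle .
\]
On the other hand, expanding the block tridiagonal quadratic form, $\vw^\tran\mM\,\vw=\sum_{k=1}^{d-1}\vect(\vv_k)^\tran\mA_k\vect(\vv_k)+2\sum_{k=1}^{d-2}\vect(\vv_k)^\tran\mB_k\vect(\vv_{k+1})$. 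From~\eqref{eq: kth unfolding} one has $\calU_k^{<k>}=\calX^{\leq k}\vv_k{\calX^{\geq k+1}}^\tran$ and $\calU_{k+1}^{<k>}=\calX^{\leq k}R(\calX^{k+1})(\vv_{k+1}\otimes\mI_{n_{k+1}})({\calX^{\geq k+2}}^\tran\otimes\mI_{n_{k+1}})$ (using $R(\calX^{k+1}\times_3\vv_{k+1}^\tran)=R(\calX^{k+1})(\vv_{k+1}\otimes\mI_{n_{k+1}})$); substituting these into the Frobenius products and simplifying with~\eqref{eq: recursive relation}, \eqref{eq: interface matrix product} and standard $\vect$/Kronecker identities, one obtains $\vect(\vv_k)^\tran\mA_k\vect(\vv_k)=\fronorm{\calU_k}^2$ and $\vect(\vv_k)^\tran\mB_k\vect(\vv_{k+1})=-\tfrac12\left\langle\calU_{k+1},\calU_k\right\rangle$. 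Plugging these in gives $\vw^\tran\mM\,\vw=\tfrac12\,\bar{g}_{\bar{\calX}}(\bar{\zeta}(\bar{\vv}),\bar{\zeta}(\bar{\vv}))$, completing the argument. (This identity is not accidental: by its derivation, system~\eqref{eq: linear equation} is one half of the normal equations for the $\bar{g}_{\bar{\calX}}$-orthogonal projection onto $\calV_{\bar{\calX}}$, and evaluating those equations at the already-vertical vector $\bar{\xi}=\bar{\zeta}(\bar{\vv})$, whose unique solution is $\bar{\vv}$ itself, reproduces the identity after pairing with $\vw$.)

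The step I expect to be the main obstacle is the off-diagonal identity $\vect(\vv_k)^\tran\mB_k\vect(\vv_{k+1})=-\tfrac12\left\langle\calU_{k+1},\calU_k\right\rangle$: both sides involve the same core $\calX^{k+1}$, but it enters through its left unfolding $L(\calX^{k+1})$ inside $\mB_k$ and through its right unfolding $R(\calX^{k+1})$ inside $\calU_{k+1}^{<k>}$, so the verification amounts to carefully reconciling the two Kronecker-product orderings while repeatedly using the recursions~\eqref{eq: recursive relation}; the remaining steps are routine.
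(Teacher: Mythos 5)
Your proof is correct and takes a genuinely different route from the paper's. The paper first notes that $\mM$ is invertible (from the existence/uniqueness of the projection) and then establishes positive semidefiniteness by decomposing the quadratic form $\vx^\tran\mM\vx$ into overlapping $2\times 2$ block quadratic forms $\mM_k=\begin{bmatrix}\tfrac12\mA_k & \mB_k\\ \mB_k^\tran & \tfrac12\mA_{k+1}\end{bmatrix}$, and showing each $\mM_k\succeq 0$ via a Schur-complement criterion; the crux of that computation is diagonalizing $\mR^{k+2}$ and taking an SVD of $R(\calX^{k+1})(\mU\bSigma^{1/2}\otimes\mI_{n_{k+1}})$ to exhibit ${\mR^{k+2}}^{-1}\otimes\mI_{n_{k+1}}-R(\calX^{k+1})^\tran{\mR^{k+1}}^{-1}R(\calX^{k+1})$ as a conjugate of $\mI-\mY\mY^\tran\succeq 0$. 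Your argument instead recognizes the whole quadratic form as $\tfrac12\,\bar{g}_{\bar\calX}(\bar\zeta(\bar\vv),\bar\zeta(\bar\vv))$, where $\bar\zeta(\bar\vv)$ is the vertical vector parametrized by $\bar\vv$, via the telescoping identity $\bar{g}(\bar\zeta,\bar\zeta)=\sum_k\fronorm{\calU_k-\calU_{k-1}}^2$. I checked the diagonal identity $\vect(\vv_k)^\tran\mA_k\vect(\vv_k)=\fronorm{\calU_k}^2$ and the off-diagonal identity $\vect(\vv_k)^\tran\mB_k\vect(\vv_{k+1})=-\tfrac12\langle\calU_{k+1},\calU_k\rangle$ against the recursions~\eqref{eq: recursive relation} and the $\vect$/Kronecker conventions (the apparent mismatch between $\vv_k^\tran\mL^k$ and $\mL^k\vv_k$ resolves by transposing inside the trace since $\mR^{k+2}$ is symmetric), and they hold. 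Your approach has two advantages: it yields strict positive definiteness directly, with no separate appeal to invertibility of the coefficient matrix, and it explains \emph{why} $\mM$ must be positive definite — it is (half) the Gram matrix of the vertical space in its natural coordinatization, so positive definiteness reduces to positive definiteness of $\bar{g}_{\bar\calX}$ plus injectivity of $\bar\vv\mapsto\bar\zeta(\bar\vv)$, both of which you verify. The paper's argument, while more computational, is self-contained at the level of linear algebra and does not rely on the geometric interpretation of the linear system; your argument trades that for a cleaner structural insight at the cost of the (admittedly routine but fiddly) Kronecker/$\vect$ bookkeeping in the off-diagonal verification, which you correctly flag as the main technical step.
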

\begin{proof}
Since the symmetric block tridiagonal matrix in~\eqref{eq: linear equation} is invertible, we only need to verify the positive semidefiniteness of this matrix.
    For any $\vx = \lsb\vx_1^\tran,\cdots,\vx_{d-1}^\tran \rsb^\tran\in\R^{\sum_{k = 1}^{d-1}r_k^2}$ with $\vx_k\in\R^{r_k^2}$, $k = 1,\cdots,d-1$,  we have
\begin{align*}
\vx^\tran\underbrace{\begin{bmatrix}
\mA_1 &\mB_1\\
\mB_1^\tran &\mA_2&\mB_2\\
&\ddots&\ddots&\ddots\\
& &\mB_{d-2}^\tran&\mA_{d-1}
\end{bmatrix}}_{:= \mM}\vx
& = \begin{bmatrix}
\vx_1^\tran&\vx_2^\tran
\end{bmatrix}
\begin{bmatrix}
\mA_1 &\mB_1\\
\mB_1^\tran &\frac{1}{2}\mA_2
\end{bmatrix}
 \begin{bmatrix}
\vx_1\\
\vx_2
\end{bmatrix}
+
 \begin{bmatrix}
\vx_2^\tran&\vx_3^\tran
\end{bmatrix}
\begin{bmatrix}
\frac{1}{2}\mA_2 &\mB_2\\
\mB_2^\tran &\frac{1}{2}\mA_3
\end{bmatrix}
 \begin{bmatrix}
\vx_2\\
\vx_3
\end{bmatrix}\\
&\quad+\cdots
+  \begin{bmatrix}
\vx_{d-2}^\tran&\vx_{d-1}^\tran
\end{bmatrix}
\begin{bmatrix}
\frac{1}{2}\mA_{d-2} &\mB_{d-2}\\
\mB_{d-2}^\tran &\mA_{d-1}
\end{bmatrix}
 \begin{bmatrix}
\vx_{d-2}\\
\vx_{d-1}
\end{bmatrix}\\
&\geq \sum_{k = 1}^{d-2} \begin{bmatrix}
\vx_{k}^\tran&\vx_{k+1}^\tran
\end{bmatrix}
\underbrace{
\begin{bmatrix}
\frac{1}{2}\mA_{k} &\mB_{k}\\
\mB_{k}^\tran &\frac{1}{2}\mA_{k+1}
\end{bmatrix}}_{:=\mM_k}
 \begin{bmatrix}
\vx_{k}\\
\vx_{k+1}
\end{bmatrix}.
\end{align*}
  We will next show that the matrices $\mM_k$  are positive semidefinite which naturally yields the positive semidefiniteness of the matrix $\mM$. 
 
 First, it is not hard to see that $\mA_k$ is invertible and positive definite.  Thus, by Proposition 2.2 in~\cite{gallier2020schur},  a sufficient and necessary condition for   $\mM_k\succeq 0$ is $\frac{1}{2}\mA_{k+1}-\mB_k^\tran\lb\frac{1}{2}\mA_k\rb^{-1}\mB_k\succeq 0$.
The application of~\eqref{eq: recursive relation} yields that
\begin{align*}
    \mL^{k+1} &= L\lb\calX^{k+1}\rb^\tran\lb\mI_{n_{k+1}}\otimes\mL^k\rb L\lb\calX^{k+1}\rb,\\
    \mR^{k+1} &= R\lb \calX^{k+1}\rb\lb\mR^{k+2}\otimes \mI_{n_{k+1}}\rb R\lb \calX^{k+1}\rb^\tran.
\end{align*}
It can be seen that
\begin{align*}
&\mA_{k+1}-4\cdot\mB_k^\tran\mA_k^{-1}\mB_k\\
    &\quad= \mR^{k+2}\otimes\mL^{k+1}- \lb\mR^{k+2}\otimes L\lb\calX^{k+1}\rb^\tran\rb\lb R\lb\calX^{k+1}\rb^\tran{\mR^{k+1}}^{-1}R\lb\calX^{k+1}\rb\otimes\mL^k\rb  \lb\mR^{k+2}\otimes L\lb\calX^{k+1}\rb\rb\\
    &\quad = \lb\mR^{k+2}\otimes L\lb\calX^{k+1}\rb^\tran\rb\lb  \lb {\mR^{k+2}}^{-1}\otimes\mI_{n_{k+1}}- R\lb\calX^{k+1}\rb^\tran{\mR^{k+1}}^{-1}R\lb\calX^{k+1}\rb\rb\otimes\mL^k\rb  \lb\mR^{k+2}\otimes L\lb\calX^{k+1}\rb\rb.
\end{align*}
Let   $\mR^{k+2} = \mU\bm{\Sigma}\mU^\tran$ be the eigenvalue decomposition of $\mR^{k+2}$ and     $R\lb\calX^{k+1}\rb\lb\mU\bm{\Sigma}^{1/2}\otimes\mI_{n_{k+1}}\rb = \mX\bm{\Lambda}\mY^\tran$ be the singular value decomposition of $R\lb\calX^{k+1}\rb\lb\mU\bm{\Sigma}^{1/2}\otimes\mI_{n_{k+1}}\rb$. One has  $\mR^{k+1} = \mX\bm{\Lambda}^2\mX^\tran$ which is  the eigenvalue decomposition of $\mR^{k+1}$.
It follows that
\begin{align*} &R\lb\calX^{k+1}\rb^\tran{\mR^{k+1}}^{-1}R\lb\calX^{k+1}\rb\\
    &~ =
R\lb\calX^{k+1}\rb^\tran\mX\bm{\Lambda}^{-2}\mX^\tran R\lb\calX^{k+1}\rb \\
    & ~ = 
\lb\mU\bm{\Sigma}^{-1/2}\bm{\Sigma}^{1/2}\mU^\tran\otimes\mI_{n_k}\rb R\lb\calX^{k+1}\rb^\tran\mX\bm{\Lambda}^{-2}\mX^\tran R\lb\calX^{k+1}\rb \lb\mU\bm{\Sigma}^{1/2}\bm{\Sigma}^{-1/2}\mU^\tran\otimes\mI_{n_{k+1}}\rb\\
    &~ = \lb\mU\bm{\Sigma}^{-1/2}\otimes \mI_{n_{k+1}}\rb\lb\mY\bm{\Lambda}\mX^\tran \mX\bm{\Lambda}^{-2}\mX^\tran\mX\bm{\Lambda}\mY^\tran \rb\lb\bm{\Sigma}^{-1/2}\mU^\tran\otimes \mI_{n_{k+1}}\rb\\
    &~ = \lb\mU\bm{\Sigma}^{-1/2}\otimes \mI_{n_{k+1}}\rb  \mY\mY^\tran\lb\bm{\Sigma}^{-1/2}\mU^\tran\otimes \mI_{n_{k+1}}\rb.
\end{align*}
Then, one can obtain
\begin{align*}
    &{\mR^{k+2}}^{-1}\otimes\mI_{n_{k+1}}- R\lb\calX^{k+1}\rb^\tran{\mR^{k+1}}^{-1}R\lb\calX^{k+1}\rb\\
    &\quad =
    \mU\bm{\Sigma}^{-1}\mU^\tran\otimes \mI_{n_{k+1}}- \lb\mU\bm{\Sigma}^{-1/2}\otimes \mI_{n_{k+1}}\rb  \mY\mY^\tran\lb\bm{\Sigma}^{-1/2}\mU^\tran\otimes \mI_{n_{k+1}}\rb\\
    &\quad = \lb\mU\bm{\Sigma}^{-1/2}\otimes \mI_{n_{k+1}}\rb \lb \mI_{n_{k+1}r_{k+1}}-\mY\mY^\tran\rb\lb\bm{\Sigma}^{-1/2}\mU^\tran\otimes \mI_{n_{k+1}}\rb\succeq 0.
\end{align*}
Consequently, $\mM_k\succeq 0$ for $k = 1,\cdots,d-2$ which indicates the positive definiteness  of the matrix $\mM$.
\end{proof}

\subsection{Riemannian Metric}
 In this section we verify that the preconditioned metric defined on $T_{\bar{\calX}}\overline{\calM}_{\vr}$, see~\eqref{eq: premetric}, indeed induces a Riemannian metric  on $T_{\lsb\bar{\calX}\rsb}\overline{\calM}_{\vr}/\calG$.
To this end, we first establish the relation
between the horizontal lifts of $\xi_{\lsb\bar{\calX}\rsb}\in T_{\lsb\bar{\calX}\rsb}\overline{\calM}_{\vr}/\calG$ at different elements in $\lsb \bar{\calX}\rsb$.
\begin{lemma}\label{lem: lift equivalent}
Given $\xi_{\lsb\bar{\calX}\rsb}\in T_{\lsb\bar{\calX}\rsb}\overline{\calM}_{\vr}/\calG$,  suppose that  the horizontal lift of $\xi_{\lsb\bar{\calX}\rsb}$  at $\bar{\calX}$ is $\bar{\xi}_{\bar{\calX}}$.  Then for any  $\bar{\calY}  \in \lsb\bar{\calX}\rsb$,  the horizontal lift of $\xi_{\lsb\bar{\calX}\rsb}$ at $\bar{\calY}$ satisfies
\begin{align*}
\bar{\xi}_{\bar{\calY}} = \theta_{A}\lb\bar{\xi}_{\bar{\calX}}\rb := \left\lbrace\xi^1\times_3\mA_1^\tran,\xi^2\times_1\mA_1^{-1}\times_3\mA_2^\tran,\cdots,\xi^d\times_1\mA_{d-1}^{-1}\right\rbrace,
\end{align*}
where $\bar{\xi}_{\bar{\calX}} = \left\lbrace \xi^1,\cdots,\xi^d\right\rbrace$ and $\mA_1,\cdots,\mA_{d-1}$ are invertible matrices such that~\eqref{eq: barX equivalent to barY} holds.
\end{lemma}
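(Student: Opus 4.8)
The plan is to reduce the claim to a known characterization of how horizontal lifts transform under the group action. The key object is the map $\theta_A$ defined in the statement, which is precisely the differential at $\bar{\calX}$ of the group action $\bar{\calX}\mapsto\bar{\calY}$ described in~\eqref{eq: barX equivalent to barY}; call this action $R_A\colon\overline{\calM}_{\vr}\to\overline{\calM}_{\vr}$, so that $\bar{\calY}=R_A(\bar{\calX})$ and $\theta_A=\mathrm{D}R_A(\bar{\calX})$. First I would verify the diagram-level identity $\pi\circ R_A=\pi$, which holds because $R_A$ maps each equivalence class into itself (this is exactly the content of~\eqref{eq: barX equivalent to barY}: two TT representations have the same $\phi$-image iff they are related by such an $A$). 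Differentiating $\pi\circ R_A=\pi$ at $\bar{\calX}$ gives $\mathrm{D}\pi(\bar{\calY})\circ\theta_A=\mathrm{D}\pi(\bar{\calX})$ on $T_{\bar{\calX}}\overline{\calM}_{\vr}$. Applying this to $\bar{\xi}_{\bar{\calX}}$, and using $\mathrm{D}\pi(\bar{\calX})[\bar{\xi}_{\bar{\calX}}]=\xi_{\lsb\bar{\calX}\rsb}$ by definition of the horizontal lift, yields $\mathrm{D}\pi(\bar{\calY})\big[\theta_A(\bar{\xi}_{\bar{\calX}})\big]=\xi_{\lsb\bar{\calX}\rsb}$. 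So $\theta_A(\bar{\xi}_{\bar{\calX}})$ is a valid \emph{representation} of $\xi_{\lsb\bar{\calX}\rsb}$ at $\bar{\calY}$; it remains only to check that it is the \emph{horizontal} one, i.e. that $\theta_A(\bar{\xi}_{\bar{\calX}})\in\calH_{\bar{\calY}}$, since by the uniqueness of the horizontal lift (the restriction of $\mathrm{D}\pi(\bar{\calY})$ to $\calH_{\bar{\calY}}$ is an isomorphism) this forces $\bar{\xi}_{\bar{\calY}}=\theta_A(\bar{\xi}_{\bar{\calX}})$.

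So the substantive step is: $\theta_A$ maps $\calH_{\bar{\calX}}$ into $\calH_{\bar{\calY}}$. I would prove this directly using the explicit horizontal-space equations~\eqref{eq: horizontal equation}. Given $\bar{\xi}=\{\xi^k\}\in\calH_{\bar{\calX}}$, write $\bar{\calY}=\{\calY^k\}$ with $\calY^1=\calX^1\times_3\mA_1^\tran$, $\calY^k=\calX^k\times_1\mA_{k-1}^{-1}\times_3\mA_k^\tran$ for $2\le k\le d-1$, $\calY^d=\calX^d\times_1\mA_{d-1}^{-1}$, and $\eta^k$ the corresponding components of $\theta_A(\bar{\xi})$. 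The mode products act on $L(\cdot)$ and $R(\cdot)$ by right/left multiplication with Kronecker factors: e.g. $L(\calX^k\times_1\mA_{k-1}^{-1}\times_3\mA_k^\tran)=(\mA_k\otimes\mI_{n_k})\,L(\calX^k\times_1\mA_{k-1}^{-1})$ and similarly $R$ picks up a factor $\mA_{k-1}^{-\tran}$, so that the interface products transform as $\mL^k_{\bar{\calY}}=\mA_k^{-\tran}\mL^k_{\bar{\calX}}\mA_k^{-1}$ and $\mR^k_{\bar{\calY}}=\mA_{k-1}\mR^k_{\bar{\calX}}\mA_{k-1}^\tran$ (up to verifying the precise placement of inverses and transposes). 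Plugging these into the $\bar{\calY}$-version of~\eqref{eq: horizontal equation} and cancelling the $\mA_k$-factors from both sides, the equation collapses to the $\bar{\calX}$-version for $\{\xi^k\}$, which holds by hypothesis. This is the routine-but-delicate bookkeeping I would not grind through here, but it is mechanical once the transformation rules for $L,R,\mL^k,\mR^k$ are recorded as a preliminary computation.

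The main obstacle is bookkeeping rather than conceptual: keeping straight the exact conjugation pattern $\mA_k$ vs.\ $\mA_k^{-1}$ vs.\ $\mA_k^\tran$ across the three objects $L(\calX^k)$, $R(\calX^k)$, and the two interface matrices, and confirming that the "telescoping" cancellation in~\eqref{eq: horizontal equation} is exact (the indices $k-1,k,k+1,k+2$ all appear, so each $\mA_j$ must cancel consistently). A cleaner alternative that avoids the explicit equations entirely: since $\calH_{\bar{\calY}}$ is the $\bar{g}_{\bar{\calY}}$-orthogonal complement of $\calV_{\bar{\calY}}$, it suffices to show (i) $\theta_A(\calV_{\bar{\calX}})=\calV_{\bar{\calY}}$, which is immediate from the explicit form of $\calV$ in Proposition~\ref{pro: vertical space} combined with the chain rule applied to $R_A$ on each equivalence class, and (ii) $\theta_A$ is a $\bar{g}$-isometry in the sense $\bar{g}_{\bar{\calY}}(\theta_A\bar{\xi},\theta_A\bar{\eta})=\bar{g}_{\bar{\calX}}(\bar{\xi},\bar{\eta})$; part (ii) follows because by the Definition~\eqref{eq: premetric} the metric is built from inner products of $\phi(\{\calX^1,\dots,\xi^k,\dots,\calX^d\})$, and replacing $\bar{\calX}$ by $R_A(\bar{\calX})$ and $\bar{\xi}$ by $\theta_A(\bar{\xi})$ leaves each such tensor $\phi(\cdots)$ unchanged, since inserting the $\mA_j$-substitutions core-by-core is itself an instance of~\eqref{eq: barX equivalent to barY} and $\phi$ is invariant under it. Given (i) and (ii), $\theta_A$ sends $\calV_{\bar{\calX}}^{\perp}=\calH_{\bar{\calX}}$ to $\calV_{\bar{\calY}}^{\perp}=\calH_{\bar{\calY}}$, completing the argument. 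I would present this second route as the main proof, relegating the direct computation to a remark.
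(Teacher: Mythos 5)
Your proposal is correct, and its first half (differentiating $\pi\circ R_A=\pi$ to conclude $\mathrm{D}\pi(\bar{\calY})[\theta_A(\bar{\xi}_{\bar{\calX}})]=\xi_{\lsb\bar{\calX}\rsb}$) is the same as the paper's, stated slightly more carefully. The substantive step — $\theta_A(\bar{\xi}_{\bar{\calX}})\in\calH_{\bar{\calY}}$ — is handled by a genuinely different route: the paper verifies the horizontal-space equation~\eqref{eq: horizontal equation} at $\bar{\calY}$ by direct substitution and cancellation of the $\mA_k$ factors, whereas you show that $\theta_A$ is a $\bar{g}$-isometry (from the $\theta_A$-invariance of each $\phi(\{\calX^1,\dots,\xi^k,\dots,\calX^d\})$) that maps $\calV_{\bar{\calX}}$ onto $\calV_{\bar{\calY}}$, hence maps the $\bar{g}$-orthogonal complement $\calH_{\bar{\calX}}$ onto $\calH_{\bar{\calY}}$. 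Your route is cleaner and buys you the metric-invariance statement for free; in fact the isometry computation you sketch is essentially what the paper does in the \emph{subsequent} lemma to show $g_{\lsb\bar{\calX}\rsb}$ is well-defined, so you are reordering the dependency (isometry first, then horizontal-lift equivariance) rather than the other way around, which is logically fine since your isometry argument does not presuppose the present lemma. One small caution on the alternative you relegate to a remark: the transformation rules you record are off — one finds $\calY^{\leq k}=\calX^{\leq k}\mA_k$ and $\calY^{\geq k}=\calX^{\geq k}\mA_{k-1}^{-\tran}$, hence $\mL^k_{\bar{\calY}}=\mA_k^\tran\mL^k_{\bar{\calX}}\mA_k$ and $\mR^k_{\bar{\calY}}=\mA_{k-1}^{-1}\mR^k_{\bar{\calX}}\mA_{k-1}^{-\tran}$, the opposite conjugation pattern from what you wrote — but since you flagged the uncertainty and did not rely on those formulas in your main argument, this does not affect the proof's validity.
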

\begin{proof}
By the chain rule,
\begin{align*}
    \xi_{\lsb\bar{\calX}\rsb} &= \mathrm{D}\pi\lb\bar{\calX}\rb\lsb\bar{\xi}_{\bar{\calX}}\rsb = \mathrm{D}\pi\lb\theta_A\lb\bar{\calX}\rb\rb \lsb\bar{\xi}_{\bar{\calX}}\rsb\\
   & = \mathrm{D}\pi\lb\bar{\calY}\rb \lsb \mathrm{D}\theta_A\lb\bar{\calX}\rb\lsb\bar{\xi}_{\bar{\calX}}\rsb\rsb\\
  &= \mathrm{D}\pi\lb\bar{\calY}\rb \lsb \theta_{A}\lb\bar{\xi}_{\bar{\calX}}\rb \rsb.
\end{align*}
Thus, under the preconditioned metric,
it remains to show that $\bar{\xi}_{\bar{\calY}}\in\calH_{\bar{\calY}}$. This fact  can be verified as follows.  For $k = 1,\cdots,d-1$,  the left hand side of the equation~\eqref{eq: horizontal equation} can be expressed as 
    \begin{align*}
        &{\calY^{\leq k}}^\tran\lb \mI_{n_k}\otimes \calY^{\leq k-1}\rb L\lb \xi_{\bar{\calX}}^k\times_1\mA_{k-1}^{-1}\times_3\mA_k^\tran\rb{\calY^{\geq k+1}}^\tran\calY^{\geq k+1}\\
       & \quad= \mA_k^\tran L\lb\calX^k\rb^\tran\lb \mI_{n_k}\otimes\mA_{k-1}^{-\tran}\rb\lb\mI_{n_k}\otimes\mA_{k-1}^\tran\mL^{k-1}\mA_{k-1}\rb \lb\mI_{n_k}\otimes\mA_{k-1}^{-1}\rb L\lb\xi_{\bar{\calX}}^k \rb\mA_k\lb \mA_k\mR^{k+1}\mA_{k}^{-\tran}\rb\\
        & \quad = \mA_{k}^\tran {\calX^{\leq k}}^\tran L\lb\xi_{\bar{\calX}}^k \rb{\calX^{\geq k+1}}^\tran\calX^{\geq k+1}\mA_k^{-\tran},
    \end{align*}
    while  the right-hand side of the equation~\eqref{eq: horizontal equation} can be written as
    \begin{align*}
    &{\calY^{\leq k}}^\tran\calY^{\leq k} R\lb \xi_{\bar{\calX}}^{k+1}\times_1\mA_k^{-1}\times_3\mA_{k+1}^\tran \rb \lb {\calY^{\geq k+2}}^\tran\otimes\mI_{n_{k+1}}\rb\calY^{\geq k+1}\\
       & \quad= \mA_k^\tran {\calX^{\leq k}}^\tran\calX^{\geq k} R\lb\xi_{\bar{\calX}}^{k+1} \rb\lb {\calX^{\geq k+2}}^\tran\otimes\mI_{n_{k+1}}\rb\calX^{\geq k+1}\mA_k^{-\tran}.
    \end{align*}
    Since $\mA_k$ is a non-singular matrix, we conclude that
    \begin{align*}
       &{\calY^{\leq k}}^\tran\lb \mI_{n_k}\otimes \calY^{\leq k-1}\rb L\lb \xi_{\bar{\calX}}^k\times_1\mA_{k-1}^{-1}\times_3\mA_k^\tran\rb{\calY^{\geq k+1}}^\tran\calY^{\geq k+1} \\
        &\quad= {\calY^{\leq k}}^\tran\calY^{\leq k} R\lb \xi_{\bar{\calX}}^{k+1}\times_1\mA_k^{-1}\times_3\mA_{k+1}^\tran \rb \lb {\calY^{\geq k+2}}^\tran\otimes\mI_{n_{k+1}}\rb\calY^{\geq k+1}.
    \end{align*}
    As a result, $\bar{\xi}_{\bar{\calY}}\in\calH_{\bar{\calY}}$, which completes the proof. 
\end{proof}
\begin{lemma}
For any $\xi_{\lsb\bar{\calX}\rsb},\eta_{\lsb\bar{\calX}\rsb}\in T_{\lsb\bar{\calX}\rsb}\overline{\calM}_{\vr}/\calG$, define
\begin{align}\label{eq: rie metric}
g_{\lsb\bar{\calX}\rsb}\lb\xi_{\lsb\bar{\calX}\rsb},\eta_{\lsb\bar{\calX}\rsb}\rb :=\bar{g}_{\bar{\calX}}\lb\bar{\xi}_{\bar{\calX}},\bar{\eta}_{\bar{\calX}}\rb,
\end{align}
where $\bar{\xi}_{\bar{\calX}},\bar{\eta}_{\bar{\calX}}\in\calH_{\bar{\calX}}$ are the horizontal lifts of $\xi_{\lsb\bar{\calX}\rsb},\eta_{\lsb\bar{\calX}\rsb}$ at $\bar{\calX}$. Then $g_{\lsb\bar{\calX}\rsb}\lb\cdot,\cdot\rb$ is a Riemannian metric on $T_{\lsb\bar{\calX}\rsb}\overline{\calM}_{\vr}/\calG$.

\end{lemma}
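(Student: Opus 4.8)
The plan is to verify the three defining requirements of a Riemannian metric for $g_{\lsb\bar{\calX}\rsb}$ as defined in~\eqref{eq: rie metric}: that the right-hand side is independent of the representative $\bar{\calX}$ chosen in $\lsb\bar{\calX}\rsb$ (well-definedness), that at each class it is a symmetric positive-definite bilinear form, and that it varies smoothly. Symmetry, bilinearity, and positive-definiteness will follow almost at once from the corresponding properties of $\bar{g}$ and the linearity of $\mathrm{D}\pi$; the real content is well-definedness, which is precisely what Lemma~\ref{lem: lift equivalent} was set up to enable.

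\textbf{Well-definedness.} Let $\bar{\calY}\in\lsb\bar{\calX}\rsb$, so that $\bar{\calY}$ is obtained from $\bar{\calX}$ via~\eqref{eq: barX equivalent to barY} for some invertible matrices $\mA_1,\cdots,\mA_{d-1}$. By Lemma~\ref{lem: lift equivalent}, the horizontal lifts of $\xi_{\lsb\bar{\calX}\rsb},\eta_{\lsb\bar{\calX}\rsb}$ at $\bar{\calY}$ are $\theta_{A}\lb\bar{\xi}_{\bar{\calX}}\rb$ and $\theta_{A}\lb\bar{\eta}_{\bar{\calX}}\rb$, so it suffices to prove the invariance identity
\[
\bar{g}_{\bar{\calY}}\lb\theta_{A}\lb\bar{\xi}_{\bar{\calX}}\rb,\theta_{A}\lb\bar{\eta}_{\bar{\calX}}\rb\rb=\bar{g}_{\bar{\calX}}\lb\bar{\xi}_{\bar{\calX}},\bar{\eta}_{\bar{\calX}}\rb .
\]
I would prove this directly from the definition~\eqref{eq: premetric}. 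Writing the mode products slicewise and adopting the convention $\mA_0=\mA_d=1$, one has $\calY^k\lb i_k\rb=\mA_{k-1}^{-1}\calX^k\lb i_k\rb\mA_k$ and $\lb\theta_{A}\bar{\xi}\rb^k\lb i_k\rb=\mA_{k-1}^{-1}\xi^k\lb i_k\rb\mA_k$; that is, the transformed tangent core carries exactly the same gauge factors in each mode as the transformed core itself. Substituting $\lb\theta_{A}\bar{\xi}\rb^k$ into the $k$-th slot of $\bar{\calY}$ and forming $\phi$, the matrices $\mA_1,\cdots,\mA_{d-1}$ therefore telescope and cancel exactly as in the computation showing $\phi\lb\bar{\calY}\rb=\phi\lb\bar{\calX}\rb$, which yields $\phi\lb\left\lbrace\calY^1,\cdots,\lb\theta_{A}\bar{\xi}\rb^k,\cdots,\calY^d\right\rbrace\rb=\phi\lb\left\lbrace\calX^1,\cdots,\xi^k,\cdots,\calX^d\right\rbrace\rb$, and similarly for $\bar{\eta}$. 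Hence each of the $d$ summands of $\bar{g}_{\bar{\calY}}\lb\theta_{A}\bar{\xi},\theta_{A}\bar{\eta}\rb$ equals the corresponding summand of $\bar{g}_{\bar{\calX}}\lb\bar{\xi},\bar{\eta}\rb$, which establishes the identity and hence that $g_{\lsb\bar{\calX}\rsb}$ is well defined.

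\textbf{The remaining axioms.} Symmetry and bilinearity of $g_{\lsb\bar{\calX}\rsb}$ are inherited from $\bar{g}_{\bar{\calX}}$ together with the linearity of the horizontal-lift map $\xi_{\lsb\bar{\calX}\rsb}\mapsto\bar{\xi}_{\bar{\calX}}$, which is the inverse of the linear isomorphism $\mathrm{D}\pi\lb\bar{\calX}\rb|_{\calH_{\bar{\calX}}}$. For positive-definiteness I would first note that $\bar{g}_{\bar{\calX}}$ is positive definite on $T_{\bar{\calX}}\overline{\calM}_{\vr}$: by the equivalent expression established earlier, $\bar{g}_{\bar{\calX}}\lb\bar{\xi},\bar{\xi}\rb=\sum_{k=1}^d\trace\lb\calM_2\lb\xi^k\rb^\tran\calM_2\lb\xi^k\rb\lb{\mH^k}^\tran\mH^k\rb\rb$, and since $\calX^{\leq k-1}$ and $\calX^{\geq k+1}$ have full column rank (a consequence of the rank constraints defining $\overline{\calM}_{\vr}$), the Gram matrix ${\mH^k}^\tran\mH^k=\mR^{k+1}\otimes\mL^{k-1}$ is positive definite, so the sum vanishes only if every $\xi^k=0$. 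Consequently $g_{\lsb\bar{\calX}\rsb}\lb\xi_{\lsb\bar{\calX}\rsb},\xi_{\lsb\bar{\calX}\rsb}\rb=\bar{g}_{\bar{\calX}}\lb\bar{\xi}_{\bar{\calX}},\bar{\xi}_{\bar{\calX}}\rb\geq 0$, with equality only if $\bar{\xi}_{\bar{\calX}}=0$, which forces $\xi_{\lsb\bar{\calX}\rsb}=\mathrm{D}\pi\lb\bar{\calX}\rb\lsb\bar{\xi}_{\bar{\calX}}\rsb=0$. Finally, smoothness of $\lsb\bar{\calX}\rsb\mapsto g_{\lsb\bar{\calX}\rsb}$ follows from the standard theory of quotient manifolds: $\bar{g}$ is a smooth metric on $\overline{\calM}_{\vr}$ whose restriction to the (smooth) horizontal distribution is $\calG$-invariant by the identity proved above, so it descends to a smooth metric on $\overline{\calM}_{\vr}/\calG$ making $\pi$ a Riemannian submersion. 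The main obstacle is the slicewise telescoping in the well-definedness step — in particular keeping track of the endpoint cores $k=1$ and $k=d$ — but this is the same cancellation already used to verify that $\phi$ is constant on equivalence classes.
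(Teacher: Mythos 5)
Your proof is correct and takes essentially the same route as the paper: both reduce the lemma to the $\calG$-invariance identity $\bar{g}_{\bar{\calY}}\lb\bar{\xi}_{\bar{\calY}},\bar{\eta}_{\bar{\calY}}\rb = \bar{g}_{\bar{\calX}}\lb\bar{\xi}_{\bar{\calX}},\bar{\eta}_{\bar{\calX}}\rb$, then establish it via Lemma~\ref{lem: lift equivalent} and the slicewise telescoping cancellation of the $\mA_k$ factors inside $\phi$. The only difference is bookkeeping: the paper cites Theorem 9.34 of Boumal to absorb the bilinearity, positive-definiteness, and smoothness checks into the invariance condition, whereas you verify them by hand (correctly, via positive-definiteness of $\mR^{k+1}\otimes\mL^{k-1}$ and standard Riemannian-submersion smoothness), which is a valid but not genuinely different argument.
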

\begin{proof}
With the application of~\cite[Theorem 9.34]{boumal2020introduction},
    we only need to verify the following condition
    \begin{align*}
        \bar{\calX},\bar{\calY}\in \lsb\bar{\calX}\rsb\Rightarrow \bar{g}_{\bar{\calX}}\lb\bar{\xi}_{\bar{\calX}},\bar{\eta}_{\bar{\calX}}\rb = \bar{g}_{\bar{\calY}}\lb\bar{\xi}_{\bar{\calY}},\bar{\eta}_{\bar{\calY}}\rb,
    \end{align*}
    where $\bar{\xi}_{\bar{\calX}} = \left\lbrace \xi^1,\cdots,\xi^d\right\rbrace,\bar{\eta}_{\bar{\calX}} =\left\lbrace \eta^1,\cdots,\eta^d\right\rbrace$ (resp. $\bar{\xi}_{\bar{\calY}},\bar{\eta}_{\bar{\calY}}$) are the horizontal lifts of $\xi_{\lsb\bar{\calX}\rsb},\eta_{\lsb\bar{\calX}\rsb}\in T_{\lsb\bar{\calX}\rsb}\overline{\calM}_{\vr}/\calG$ at $\bar{\calX}$ (resp. $\bar{\calY}$). Given $\bar{\calX},\bar{\calY}\in\lsb\bar{\calX}\rsb$, there exist invertible matrices $\mathrm{A} = \lb\mA_1,\cdots,\mA_{d-1}\rb$ such that~\eqref{eq: barX equivalent to barY} holds. Then one has
\begin{align*}
\bar{g}_{\bar{\calY}}\lb\bar{\xi}_{\bar{\calY}},\bar{\eta}_{\bar{\calY}}\rb 
& = \sum_{k = 1}^d\left\langle \phi\lb\left\lbrace \calY^1,\cdots, \xi^k\times_1\mA_{k-1}^{-1}\times_3\mA_k^\tran,\cdots,\calY^d\right\rbrace\rb, \phi\lb\left\lbrace \calY^1,\cdots, \eta^k\times_1\mA_{k-1}^{-1}\times_3\mA_k^\tran,\cdots,\calY^d\right\rbrace\rb \right\rangle\\
& = \sum_{k = 1}^d\left\langle \phi\lb\left\lbrace \calX^1,\cdots, \xi^k,\cdots,\calX^d\right\rbrace\rb, \phi\lb\left\lbrace \calX^1,\cdots, \eta^k,\cdots,\calX^d\right\rbrace\rb \right\rangle\\
& = \bar{g}_{\bar{\calX}}\lb \bar{\xi}_{\bar{\calX}},\bar{\eta}_{\bar{\calX}}\rb
    \end{align*}
    where the first equation follows from Lemma~\ref{lem: lift equivalent}.
\end{proof}

\subsection{Riemannian Gradient}
Consider a real-valued function $f:\overline{\calM}_{\vr}/\calG\rightarrow\R$ and its lift $\bar{f} = f\circ\pi: \overline{\calM}_{\vr}\rightarrow\R$.
By~\cite[Section 3.6.2]{absil2009optimization},
 the horizontal lift of  the Riemannian  gradient of $f$ can be obtained from the  Riemannian gradient of $\bar{f}$: $$\overline{\grad f\lb \pi\lb\bar{\calX}\rb\rb} = \grad \bar{f}\lb\bar{\calX}\rb.$$  Moreover, the Riemannian gradient of  $\bar{f}$ at $\bar{\calX}$ is  the unique element,  denoted $\grad \bar{f}(\bar{\calX})$,  such that $$\bar{g}_{\bar{\calX}}\lb \bar{\xi},\grad \bar{f}\lb\bar{\calX}\rb\rb  = \mathrm{D}\bar{f}\lb\bar{\calX}\rb\lsb\bar{\xi}\rsb~\mbox{ for all }~ \bar{\xi}\in T_{\bar{\calX}}\overline{\calM}_{\vr}.$$

 For the low rank tensor completion problem in the tensor train format, the function $\bar{f}$ is given by
 \begin{align}\label{eq: function barf}
     \bar{f}\lb\bar{\calX}\rb = \frac{1}{2} \fronorm{\calP_{\Omega}\lb\phi\lb\bar{\calX}\rb\rb- \calP_{\Omega}\lb\calT\rb}^2.
\end{align}
The next lemma gives the expression of $\grad \bar{f}\lb\bar{\calX}\rb$  under the preconditioned metric~\eqref{eq: premetric}.
\begin{lemma}\label{lem: Rie grad derive}
The Riemannian gradient of $\bar{f}$ in~\eqref{eq: function barf} is given by
\begin{align}\label{eq: rg}
\grad \bar{f}\lb\bar{\calX}\rb= \left\lbrace\left\lbrace  L^{-1}\lb\lb\mI_{n_k}\otimes {\mL^{k-1}}^{-1}{\calX^{\leq k-1}}^\tran\rb \lb \calP_{\Omega}\lb\phi\lb\bar{\calX}\rb\rb-\calP_{\Omega}\lb\calT\rb\rb^{<k>}\calX^{\geq k+1}{\mR^{k+1}}^{-1} \rb \right\rbrace_{k = 1}^d\right\rbrace,
\end{align}
where $L^{-1}$ is defined as the inverse operator of the left unfolding operator $L$ such that for any $\calZ\in \R^{r_{k-1}\times n_k\times r_k}$ and $\mZ\in\R^{r_{k-1}n_k\times r_{k}}$,
\begin{align*}
    L^{-1}\lb L\lb\calZ\rb\rb = \calZ~\text{and} ~L\lb L^{-1}\lb\mZ\rb\rb = \mZ.
\end{align*}
\end{lemma}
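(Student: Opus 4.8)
The plan is to first compute the Euclidean differential of $\bar f$, and then read off the Riemannian gradient by matching this differential against the preconditioned metric~\eqref{eq: premetric} one core tensor at a time. Since $T_{\bar\calX}\overline{\calM}_{\vr}$ is the full ambient product space, existence and uniqueness of $\grad\bar f(\bar\calX)$ follow immediately from Riesz representation once one notes that $\bar g_{\bar\calX}$ is a genuine inner product (from the equivalent expression of~\eqref{eq: premetric}, $\bar g_{\bar\calX}(\bar\xi,\bar\xi)=\sum_k\fronorm{(\mI_{n_k}\otimes\calX^{\leq k-1})L(\xi^k){\calX^{\geq k+1}}^\tran}^2$, which vanishes only when every $\xi^k=0$).

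First I would differentiate~\eqref{eq: function barf}. Since $\phi$ is multilinear in its $d$ core-tensor arguments and $\calP_\Omega$ is linear and idempotent, setting $\calE:=\calP_\Omega(\phi(\bar\calX))-\calP_\Omega(\calT)$ one obtains, for every $\bar\xi=\lcb\xi^k\rcb_{k=1}^d\in T_{\bar\calX}\overline{\calM}_{\vr}$ (so the $\xi^k$ vary freely and independently),
\[
\mathrm D\bar f(\bar\calX)[\bar\xi]\;=\;\sum_{k=1}^d\left\langle \calE,\ \phi\lb\lcb\calX^1,\dots,\calX^{k-1},\xi^k,\calX^{k+1},\dots,\calX^d\rcb\rb\right\rangle .
\]
Rewriting the $k$-th summand through the $k$-th unfolding via~\eqref{eq: kth unfolding}, it equals $\langle\calE^{<k>},\,(\mI_{n_k}\otimes\calX^{\leq k-1})L(\xi^k){\calX^{\geq k+1}}^\tran\rangle$, and transferring the two outer factors to the other side of the Frobenius inner product turns it into $\langle G^k,\ L(\xi^k)\rangle$ with $G^k:=(\mI_{n_k}\otimes{\calX^{\leq k-1}}^\tran)\,\calE^{<k>}\,\calX^{\geq k+1}$; that is, $G^k$ is the Euclidean partial derivative of $\bar f$ with respect to $L(\calX^k)$.

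Next I would use the identity $\bar g_{\bar\calX}(\bar\xi,\bar\eta)=\sum_{k=1}^d\langle(\mI_{n_k}\otimes\calX^{\leq k-1})L(\xi^k){\calX^{\geq k+1}}^\tran,\,(\mI_{n_k}\otimes\calX^{\leq k-1})L(\eta^k){\calX^{\geq k+1}}^\tran\rangle$, which follows from~\eqref{eq: premetric} upon applying~\eqref{eq: kth unfolding} to each $\phi(\cdot)$. Writing the sought gradient as $\grad\bar f(\bar\calX)=\lcb g^k\rcb_{k=1}^d$ and again shuttling the outer factors across the inner product, the $k$-th term of $\bar g_{\bar\calX}(\bar\xi,\grad\bar f(\bar\calX))$ becomes $\langle L(\xi^k),\,(\mI_{n_k}\otimes\mL^{k-1})L(g^k)\mR^{k+1}\rangle$, where I use ${\calX^{\leq k-1}}^\tran\calX^{\leq k-1}=\mL^{k-1}$ and ${\calX^{\geq k+1}}^\tran\calX^{\geq k+1}=\mR^{k+1}$ from~\eqref{eq: interface matrix product}. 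Imposing $\bar g_{\bar\calX}(\bar\xi,\grad\bar f(\bar\calX))=\mathrm D\bar f(\bar\calX)[\bar\xi]$ for all $\bar\xi$, and exploiting that the $\xi^k$ are free and independent, forces for each $k$ the matrix equation $(\mI_{n_k}\otimes\mL^{k-1})L(g^k)\mR^{k+1}=G^k$.

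Finally, because $\calX^{\leq k-1}$ and $\calX^{\geq k+1}$ have full column rank — exactly the rank constraint defining $\overline{\calM}_{\vr}$ — the Gram matrices $\mL^{k-1}$ and $\mR^{k+1}$ are symmetric positive definite, hence invertible, so solving the matrix equation gives $L(g^k)=(\mI_{n_k}\otimes{\mL^{k-1}}^{-1}{\calX^{\leq k-1}}^\tran)\,\calE^{<k>}\,\calX^{\geq k+1}\,{\mR^{k+1}}^{-1}$; applying $L^{-1}$ then yields exactly~\eqref{eq: rg}. I expect the only genuinely delicate step to be the bookkeeping with the unfolding operators and the Kronecker–transpose identities when moving factors across the Frobenius inner products; invertibility of the interface Gram matrices and the well-posedness of the Riemannian gradient are immediate.
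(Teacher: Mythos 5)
Your proof is correct and follows essentially the same route as the paper: both compute the Euclidean differential $\mathrm{D}\bar f(\bar\calX)[\bar\xi]$, rewrite each summand through the $k$-th unfolding identity~\eqref{eq: kth unfolding}, and match against the preconditioned metric to read off the gradient. The paper is slightly more condensed (it simply exhibits the $k$-th term of~\eqref{eq: present riem grad} rewritten in the required form, relying implicitly on the fact that $\mI_{n_k}\otimes\calX^{\leq k-1}\mL^{k-1,-1}{\calX^{\leq k-1}}^\tran$ and $\calX^{\geq k+1}\mR^{k+1,-1}{\calX^{\geq k+1}}^\tran$ are orthogonal projections), whereas you make the transfer of Kronecker and interface factors across the Frobenius inner product explicit and solve the resulting matrix equation $(\mI_{n_k}\otimes\mL^{k-1})L(g^k)\mR^{k+1}=G^k$ directly; this is the same argument spelled out in more detail.
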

\begin{proof}
    Let $\bar{\xi} = \left\lbrace\xi^1,\cdots,\xi^d\right\rbrace\in T_{\bar{\calX}}\overline{\calM}_{\vr}$.
We have
\begin{align}\label{eq: present riem grad}
\bar{g}_{\bar{\calX}}\lb \bar{\xi},\grad \bar{f}\lb\bar{\calX}\rb\rb &= \mathrm{D}\bar{f}\lb\bar{\calX}\rb\lsb\bar{\xi}\rsb\notag\\
& = \lim_{t\rightarrow 0}\frac{\bar{f}\lb\bar{\calX}+t\bar{\xi}\rb-\bar{f}\lb\bar{\calX}\rb}{t}\notag\\
& = \sum_{k = 1}^d \left\langle \calP_{\Omega}\lb\phi\lb\bar{\calX}\rb\rb-\calP_{\Omega}\lb\calT\rb,\phi\lb\left\lbrace \calX^1,\cdots,\xi^k,\cdots,\calX^d\right\rbrace\rb \right\rangle\notag\\
&=  \sum_{k = 1}^d \left\langle \lb\calP_{\Omega}\lb\phi\lb\bar{\calX}\rb\rb-\calP_{\Omega}\lb\calT\rb\rb^{<k>}, \lb\mI_{n_k}\otimes\calX^{\leq k-1}\rb L\lb\xi^k\rb{\calX^{\geq k+1}}^\tran\right\rangle.
\end{align}
Moreover, each term in~\eqref{eq: present riem grad} can be expanded as
\begin{small}
\begin{align*}
 \left\langle \lb\mI_{n_k}\otimes \calX^{\leq k-1}{\mL^{k-1}}^{-1}{\calX^{\leq k-1}}^\tran\rb \lb\calP_{\Omega}\lb\phi\lb\bar{\calX}\rb\rb-\calP_{\Omega}\lb\calT\rb\rb^{<k>}\calX^{\geq k+1}{\mR^{k+1}}^{-1} {\calX^{\geq k+1}}^\tran, \lb\mI_{n_k}\otimes\calX^{\leq k-1}\rb L\lb\xi^k\rb{\calX^{\geq k+1}}^\tran\right\rangle,
\end{align*}
\end{small}
which yields the expression of the Riemannian gradient of $\bar{f}$.  
\end{proof}

\subsection{Riemannian Connection and Riemannian Hessian}\label{sec: rieconn and riehess}
For any two vector fields $\xi,\lambda \in T_{\lsb\bar{\calX}\rsb}\overline{\calM}_{\vr}/\calG$, the horizontal lift of the Riemannian connection is given by ~\cite[Proposition 5.3.3]{absil2009optimization}
\begin{align*}
    \overline{\nabla_{\xi_{[\bar{\calX}]}}\lambda} = \calP_{\bar{\calX}}^{\calH}\lb\nabla_{\bar{\xi}_{\bar{\calX}}}\bar{\lambda}  \rb,
\end{align*}
where $\calP_{\bar{\calX}}^{\calH}$ denotes the projection onto the horizontal space, see~\eqref{eq: horizontal projection}. Next, we derive the Riemannian connection $\nabla_{\bar{\xi}_{\bar{\calX}}}\bar{\lambda}  $ on the total space
$\overline{\calM}_{\vr}$ by invoking the Koszul formula. For the preconditioned metric $\bar{g}_{\bar{\calX}}$~\eqref{eq: premetric},  the Koszul formula is 
\begin{align*}
2\bar{g}_{\bar{\calX}}\lb\nabla_{\bar{\xi}_{\bar{\calX}}}\bar{\lambda},\bar{\eta}_{\bar{\calX}}\rb & = \mathrm{D} \bar{g}\lb \bar{\lambda},\bar{\eta}\rb\lb\bar{\calX}\rb\lsb \bar{\xi}_{\bar{\calX}}\rsb + \mathrm{D} \bar{g}\lb \bar{\eta},\bar{\xi}\rb\lb\bar{\calX}\rb\lsb \bar{\lambda}_{\bar{\calX}}\rsb - \mathrm{D} \bar{g}\lb \bar{\xi},\bar{\lambda}\rb\lb\bar{\calX}\rb\lsb \bar{\eta}_{\bar{\calX}}\rsb \\
& \quad-\bar{g}_{\bar{\calX}}\lb\bar{\xi}_{\bar{\calX}}, \lsb \bar{\lambda},\bar{\eta}\rsb_{\bar{\calX}}\rb+\bar{g}_{\bar{\calX}}\lb\bar{\lambda}_{\bar{\calX}}, \lsb \bar{\eta},\bar{\xi}\rsb_{\bar{\calX}}\rb+\bar{g}_{\bar{\calX}}\lb\bar{\eta}_{\bar{\calX}}, \lsb \bar{\xi},\bar{\lambda}\rsb_{\bar{\calX}}\rb,
\end{align*}
where the  definition of the Lie bracket $\lsb\cdot,\cdot\rsb$ can be found for example in~\cite[Section 5.3.1]{absil2009optimization} and $\bar{g}\lb \bar{\lambda},\bar{\eta}\rb\lb\bar{\calX}\rb = \bar{g}_{\bar{\calX}}\lb\bar{\lambda}_{\bar{\calX}},\bar{\eta}_{\bar{\calX}}\rb$.  

A straightforward calculation  shows that
\begin{align*}
\mathrm{D} \bar{g}\lb \bar{\lambda},\bar{\eta}\rb\lb\bar{\calX}\rb\lsb \bar{\xi}_{\bar{\calX}}\rsb & = \bar{g}_{\bar{\calX}}\lb \mathrm{D}\bar{\lambda}\lb\bar{\calX}\rb\lsb \bar{\xi}_{\bar{\calX}}\rsb,  \bar{\eta}_{\bar{\calX}}\rb+\bar{g}_{\bar{\calX}}\lb \bar{\lambda}_{\bar{\calX}},   \mathrm{D}\bar{\eta}\lb\bar{\calX}\rb\lsb \bar{\xi}_{\bar{\calX}}\rsb\rb\\
&\quad +\sum_{k = 1}^d\sum_{j\neq k}\left\langle \phi\lb \left\lbrace\calX^1,\cdots,\xi^j,\cdots,\lambda^k,\cdots,\calX^d\right\rbrace\rb,  \phi\lb \left\lbrace\calX^1,\cdots,\eta^k,\cdots,\calX^d\right\rbrace\rb \right\rangle\\
&\quad + \sum_{k = 1}^d\sum_{j\neq k}\left\langle \phi\lb\left\lbrace\calX^1,\cdots,\lambda^k,\cdots,\calX^d\right\rbrace\rb,  \phi\lb \left\lbrace\calX^1,\cdots,\xi^j,\cdots,\eta^k,\cdots,\calX^d\right\rbrace\rb \right\rangle.
\end{align*}
By definition of the Lie bracket, one can obtain~\cite[Section 5.3.4]{absil2009optimization}
\begin{align*}
    \lsb \bar{\lambda},\bar{\eta}\rsb_{\bar{\calX}} = \mathrm{D}\bar{\eta}\lb\bar{\calX}\rb\lsb\bar{\lambda}_{\bar{\calX}}\rsb - \mathrm{D}\bar{\lambda}\lb\bar{\calX}\rb\lsb\bar{\eta}_{\bar{\calX}}\rsb.
\end{align*}
Consequently, we have
\begin{align*}
2\bar{g}_{\bar{\calX}}\lb\nabla_{\bar{\xi}_{\bar{\calX}}}\bar{\lambda},\bar{\eta}_{\bar{\calX}}\rb & = \mathrm{D} \bar{g}\lb \bar{\lambda},\bar{\eta}\rb\lb\bar{\calX}\rb\lsb \bar{\xi}_{\bar{\calX}}\rsb + \mathrm{D} \bar{g}\lb \bar{\eta},\bar{\xi}\rb\lb\bar{\calX}\rb\lsb \bar{\lambda}_{\bar{\calX}}\rsb - \mathrm{D} \bar{g}\lb \bar{\xi},\bar{\lambda}\rb\lb\bar{\calX}\rb\lsb \bar{\eta}_{\bar{\calX}}\rsb \\
& \quad-\bar{g}_{\bar{\calX}}\lb\bar{\xi}_{\bar{\calX}}, \mathrm{D}\bar{\eta}\lb\bar{\calX}\rb\lsb\bar{\lambda}_{\bar{\calX}}\rsb - \mathrm{D}\bar{\lambda}\lb\bar{\calX}\rb\lsb\bar{\eta}_{\bar{\calX}}\rsb\rb\\
&\quad +\bar{g}_{\bar{\calX}}\lb\bar{\lambda}_{\bar{\calX}}, \mathrm{D}\bar{\xi}\lb\bar{\calX}\rb\lsb\bar{\eta}_{\bar{\calX}}\rsb - \mathrm{D}\bar{\eta}\lb\bar{\calX}\rb\lsb\bar{\xi}_{\bar{\calX}}\rsb\rb\\
&\quad +\bar{g}_{\bar{\calX}}\lb\bar{\eta}_{\bar{\calX}}, \mathrm{D}\bar{\lambda}\lb\bar{\calX}\rb\lsb\bar{\xi}_{\bar{\calX}}\rsb - \mathrm{D}\bar{\xi}\lb\bar{\calX}\rb\lsb\bar{\lambda}_{\bar{\calX}}\rsb\rb\\
& = 2\bar{g}_{\bar{\calX}}\lb\mathrm{D}\bar{\lambda}\lb\bar{\calX}\rb\lsb\bar{\xi}_{\bar{\calX}}\rsb,\bar{\eta}_{\bar{\calX}}\rb 
\\
&\quad +\underbrace{\sum_{k = 1}^d\sum_{j\neq k}\left\langle \phi\lb\left\lbrace\calX^1,\cdots,\xi_{\bar{\calX}}^j,\cdots,\lambda_{\bar{\calX}}^k,\cdots,\calX^d\right\rbrace\rb,  \phi\lb\left\lbrace \calX^1,\cdots,\eta_{\bar{\calX}}^k,\cdots,\calX^d\right\rbrace\rb \right\rangle}_{:= \alpha_1}\\
&\quad + \underbrace{\sum_{k = 1}^d\sum_{j\neq k}\left\langle \phi\lb\left\lbrace\calX^1,\cdots,\lambda_{\bar{\calX}}^k,\cdots,\calX^d\right\rbrace\rb,  \phi\lb \left\lbrace\calX^1,\cdots,\xi_{\bar{\calX}}^j,\cdots,\eta_{\bar{\calX}}^k,\cdots,\calX^d\right\rbrace\rb \right\rangle}_{:=\alpha_2}\\
&\quad +\underbrace{\sum_{k = 1}^d\sum_{j\neq k}\left\langle \phi\lb\left\lbrace\calX^1,\cdots,\lambda_{\bar{\calX}}^j,\cdots,\eta_{\bar{\calX}}^k,\cdots,\calX^d\right\rbrace\rb,  \phi\lb\left\lbrace \calX^1,\cdots,\xi_{\bar{\calX}}^k,\cdots,\calX^d\right\rbrace\rb \right\rangle}_{:=\alpha_3}\\
&\quad + \underbrace{\sum_{k = 1}^d\sum_{j\neq k}\left\langle \phi\lb\left\lbrace\calX^1,\cdots,\eta_{\bar{\calX}}^k,\cdots,\calX^d\right\rbrace\rb,  \phi\lb \left\lbrace\calX^1,\cdots,\lambda_{\bar{\calX}}^j,\cdots,\xi_{\bar{\calX}}^k,\cdots,\calX^d\right\rbrace\rb \right\rangle}_{:=\alpha_4}\\
&\quad -\underbrace{\sum_{k = 1}^d\sum_{j\neq k}\left\langle \phi\lb\left\lbrace\calX^1,\cdots,\eta_{\bar{\calX}}^j,\cdots,\xi_{\bar{\calX}}^k,\cdots,\calX^d\right\rbrace\rb,  \phi\lb\left\lbrace \calX^1,\cdots,\lambda_{\bar{\calX}}^k,\cdots,\calX^d\right\rbrace\rb \right\rangle}_{:=\alpha_5}\\
&\quad - \underbrace{\sum_{k = 1}^d\sum_{j\neq k}\left\langle \phi\lb\left\lbrace\calX^1,\cdots,\xi_{\bar{\calX}}^k,\cdots,\calX^d\right\rbrace\rb,  \phi\lb \left\lbrace\calX^1,\cdots,\eta_{\bar{\calX}}^j,\cdots,\lambda_{\bar{\calX}}^k,\cdots,\calX^d\right\rbrace\rb \right\rangle}_{:=\alpha_6}.
\end{align*}
To obtain a closed-form expression of the Riemannian connection on $\overline{\calM}_{\vr}$ under the preconditioned metric~\eqref{eq: premetric},  we need to rewrite the sum of $\alpha_i$ in the above equation as the form of $\bar{g}_{\bar{\calX}}\lb\bar{\eta}_{\bar{\calX}},\bar{\zeta}_{\bar{\calX}}\rb$ for a specific $\bar{\zeta}_{\bar{\calX}}$. For $\alpha_1$, following the same argument as deriving the Riemannian gradient in Lemma~\ref{lem: Rie grad derive}, one has
\begin{align*}
    \alpha_1 &= \sum_{k = 1}^d\left\langle  \lb\sum_{j\neq k}\phi\lb\left\lbrace\calX^1,\cdots,\xi_{\bar{\calX}}^j,\cdots,\lambda_{\bar{\calX}}^k,\cdots,\calX^d\right\rbrace\rb\rb^{<k>}, \lb\mI_{n_k}\otimes\calX^{\leq k-1}\rb L\lb\eta_{\bar{\calX}}^k\rb{\calX^{\geq k+1}}^\tran\right\rangle\\
    & = \bar{g}_{\bar{\calX}}\lb \bar{\eta}_{\bar{\calX}},  \bar{\gamma}_{\bar{\calX}} \rb,
\end{align*}
where the left unfolding of the $k$-th element in $\bar{\gamma}_{\bar{\calX}}$ is
\begin{align*}
    L\lb\gamma_{\bar{\calX}}^k\rb = \lb\mI_{n_k}\otimes {\mL^{k-1}}^{-1}{\calX^{\leq k-1}}^\tran\rb \lb\sum_{j\neq k}\phi\lb\left\lbrace\calX^1,\cdots,\xi_{\bar{\calX}}^j,\cdots,\lambda_{\bar{\calX}}^k,\cdots,\calX^d\right\rbrace\rb\rb^{<k>} \calX^{\geq k+1}{\mR^{k+1}}^{-1}. 
\end{align*}
For $\alpha_2$, it can be expressed as
\begin{align*}
    \alpha_2  
    & = \sum_{k = 1}^d\sum_{j < k}\left\langle \lb\mI_{n_k}\otimes \calX^{\leq k-1}\rb L\lb \lambda_{\bar{\calX}}^k\rb{\calX^{\geq k+1}}^\tran,  \lb\mI_{n_k}\otimes\calX_{\xi}^{\leq k-1,j}\rb L\lb \eta_{\bar{\calX}}^k\rb{\calX^{\geq k+1} }^\tran\right\rangle\\
    & \quad+ \sum_{k = 1}^d\sum_{j > k}\left\langle \lb\mI_{n_k}\otimes \calX^{\leq k-1}\rb L\lb \lambda_{\bar{\calX}}^k\rb{\calX^{\geq k+1}}^\tran,  \lb\mI_{n_k}\otimes\calX^{\leq k-1}\rb L\lb \eta_{\bar{\calX}}^k\rb{\calX_{\xi}^{\geq k+1,j} }^\tran\right\rangle\\
    & =  \sum_{k = 1}^d\sum_{j < k}\left\langle \lb\mI_{n_k}\otimes \calX^{\leq k-1}{\mL^{k-1}}^{-1} {\calX_{\xi}^{\leq k-1,j}}^\tran\calX^{\leq k-1}\rb L\lb \lambda_{\bar{\calX}}^k\rb{\calX^{\geq k+1}}^\tran,  \lb\mI_{n_k}\otimes\calX^{\leq k-1}\rb L\lb \eta_{\bar{\calX}}^k\rb{\calX^{\geq k+1} }^\tran\right\rangle\\
    & \quad+ \sum_{k = 1}^d\sum_{j > k}\left\langle \lb\mI_{n_k}\otimes \calX^{\leq k-1}\rb L\lb \lambda_{\bar{\calX}}^k\rb{\calX^{\geq k+1}}^\tran{\calX_{\xi}^{\geq k+1,j} } {\mR^{k+1}}^{-1}{\calX^{\geq k+1}}^\tran,  \lb\mI_{n_k}\otimes\calX^{\leq k-1}\rb L\lb \eta_{\bar{\calX}}^k\rb{\calX^{\geq k+1}}^\tran\right\rangle,
\end{align*}
where the modified interface matrices  are defined as
\begin{align*}
\calX_{\xi}^{\leq k,j}\in\R^{n_1n_2\cdots n_k\times r_k}: \calX_{\xi}^{\leq k,j}\lb i_1,i_2,\cdots,i_k;:\rb &= \calX^1\lb i_1\rb\cdots\xi_{\bar{\calX}}^j\lb i_j\rb\cdots\calX^k\lb i_k\rb,\\
    \calX_{\xi}^{\geq k,j}\in\R^{n_kn_{k+1}\cdots n_d\times r_{k-1}}:\calX_{\xi}^{\geq k,j}\lb i_k,i_{k+1},\cdots,i_d;:\rb &= \lsb\calX^k\lb i_k\rb\cdots\xi_{\bar{\calX}}^{j}\lb i_{j}\rb\cdots\calX^d\lb i_d\rb\rsb^\tran.
\end{align*}
Similarly, one can
obtain $\alpha_2 = \bar{g}_{\bar{\calX}}\lb\bar{\eta}_{\bar{\calX}},\bar{\upsilon}_{\bar{\calX}}\rb$, 
where the left unfolding of the $k$-th tensor in $\bar{\upsilon}_{\bar{\calX}}$ is
\begin{align*}
    L\lb \upsilon_{\bar{\calX}}^k \rb = \sum_{j<k}\lb\mI_{n_k}\otimes {\mL^{k-1}}^{-1} {\calX_{\xi}^{\leq k-1,j}}^\tran\calX^{\leq k-1}\rb L\lb \lambda_{\bar{\calX}}^k\rb +\sum_{j>k} L\lb \lambda_{\bar{\calX}}^k\rb{\calX^{\geq k+1}}^\tran{\calX_{\xi}^{\geq k+1,j} } {\mR^{k+1}}^{-1}.
\end{align*}
The remaining four terms $\alpha_3,\cdots,\alpha_6$ can be rewritten in the same way and the details are omitted.  As a result,   we get
\begin{align*}
2\bar{g}_{\bar{\calX}}\lb\nabla_{\bar{\xi}_{\bar{\calX}}}\bar{\lambda},\bar{\eta}_{\bar{\calX}}\rb &= 2\bar{g}_{\bar{\calX}}\lb\bar{\eta}_{\bar{\calX}}, \mathrm{D}\bar{\lambda}\lb\bar{\calX}\rb\lsb\bar{\xi}_{\bar{\calX}}\rsb\rb+\bar{g}_{\bar{\calX}}\lb\bar{\eta}_{\bar{\calX}},\bar{\zeta}_{\bar{\calX}}\rb,
\end{align*}
where the left unfolding of the $k$-th element of  $\bar{\zeta}_{\bar{\calX}}$ is
\begin{align*}
    L\lb \zeta_{\bar{\calX}}^k\rb &= \lb\mI_{n_k}\otimes {\mL^{k-1}}^{-1}{\calX^{\leq k-1}}^\tran\rb \lb\sum_{j\neq k}\phi\lb\left\lbrace\calX^1,\cdots,\xi_{\bar{\calX}}^j,\cdots,\lambda_{\bar{\calX}}^k,\cdots,\calX^d\right\rbrace\rb\rb^{<k>} \calX^{\geq k+1}{\mR^{k+1}}^{-1}\\
    &\quad +\lb\mI_{n_k}\otimes {\mL^{k-1}}^{-1}{\calX^{\leq k-1}}^\tran\rb \lb\sum_{j\neq k}\phi\lb\left\lbrace\calX^1,\cdots,\lambda_{\bar{\calX}}^j,\cdots,\xi_{\bar{\calX}}^k,\cdots,\calX^d\right\rbrace\rb\rb^{<k>} \calX^{\geq k+1}{\mR^{k+1}}^{-1}\\
    &\quad + \sum_{j<k}\lb\mI_{n_k}\otimes {\mL^{k-1}}^{-1} {\calX_{\xi}^{\leq k-1,j}}^\tran\calX^{\leq k-1}\rb L\lb \lambda_{\bar{\calX}}^k\rb +\sum_{j>k} L\lb \lambda_{\bar{\calX}}^k\rb{\calX^{\geq k+1}}^\tran{\calX_{\xi}^{\geq k+1,j} } {\mR^{k+1}}^{-1}\\
    &\quad +\sum_{j<k}\lb\mI_{n_k}\otimes {\mL^{k-1}}^{-1} {\calX_{\lambda}^{\leq k-1,j}}^\tran\calX^{\leq k-1}\rb L\lb \xi_{\bar{\calX}}^k\rb +\sum_{j>k} L\lb \xi_{\bar{\calX}}^k\rb{\calX^{\geq k+1}}^\tran{\calX_{\lambda}^{\geq k+1,j} } {\mR^{k+1}}^{-1}\\
    &\quad -\sum_{j<k} \lb\mI_{n_k}\otimes {\mL^{k-1}}^{-1}{\calX_{\lambda}^{\leq k-1,j}}^\tran\calX_{\xi}^{\leq k-1,j}\rb L\lb\calX^k\rb-\sum_{j>k}L\lb\calX^k\rb{\calX^{\geq k+1,j}_{\lambda}}^\tran\calX^{\geq k+1,j}_{\xi}{\mR^{k+1}}^{-1}\\
    &\quad -\sum_{j<k} \lb \mI_{n_k}\otimes{\mL^{k-1}}^{-1}{\calX_{\xi}^{\leq k-1,j}}^\tran\calX_{\lambda}^{\leq k-1,j}\rb L\lb\calX^k\rb-\sum_{j>k}L\lb\calX^k\rb{\calX^{\geq k+1,j}_{\xi}}^\tran\calX^{\geq k+1,j}_{\lambda}{\mR^{k+1}}^{-1}.
\end{align*}

The above argument leads to the following lemma about the Riemannian connection on $\overline{\calM}_{\vr}$.
\begin{lemma}\label{lem: riemannian connection}
    The Riemannian connection on $\overline{\calM}_{\vr}$ under the preconditioned metric~\eqref{eq: premetric}  is given by
\begin{align}\label{eq: riemannian connection}
\nabla_{\bar{\xi}_{\bar{\calX}}}\bar{\lambda} = \mathrm{D}\bar{\lambda}\lb\bar{\calX}\rb\lsb\bar{\xi}_{\bar{\calX}}\rsb+\frac{1}{2} \bar{\zeta}_{\bar{\calX}}.
\end{align}
\end{lemma}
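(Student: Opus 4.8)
The plan is to observe that the Koszul computation carried out just above the statement already reduces everything to a single identity, namely
$2\bar{g}_{\bar{\calX}}\lb\nabla_{\bar{\xi}_{\bar{\calX}}}\bar{\lambda},\bar{\eta}_{\bar{\calX}}\rb = 2\bar{g}_{\bar{\calX}}\lb\bar{\eta}_{\bar{\calX}}, \mathrm{D}\bar{\lambda}\lb\bar{\calX}\rb\lsb\bar{\xi}_{\bar{\calX}}\rsb\rb+\bar{g}_{\bar{\calX}}\lb\bar{\eta}_{\bar{\calX}},\bar{\zeta}_{\bar{\calX}}\rb$
valid for every $\bar{\eta}_{\bar{\calX}}\in T_{\bar{\calX}}\overline{\calM}_{\vr}$, and then to cancel $\bar{\eta}_{\bar{\calX}}$ by nondegeneracy of the metric. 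So the proof has three logical pieces: (i) justify the cancellations in the Koszul formula that isolate the $\mathrm{D}\bar{\lambda}$ term; (ii) justify the rewriting of the residual $\alpha_1+\alpha_2+\alpha_3+\alpha_4-\alpha_5-\alpha_6$ as $\bar{g}_{\bar{\calX}}\lb\bar{\eta}_{\bar{\calX}},\bar{\zeta}_{\bar{\calX}}\rb$ with $\bar{\zeta}_{\bar{\calX}}$ as displayed; (iii) conclude. Throughout we use that $\overline{\calM}_{\vr}$ is an open subset of the linear space $\R^{r_0\times n_1\times r_1}\times\cdots\times\R^{r_{d-1}\times n_d\times r_d}$, so $T_{\bar{\calX}}\overline{\calM}_{\vr}$ is the whole ambient space, vector fields are smooth maps into that space, and $\lsb\bar{\lambda},\bar{\eta}\rsb_{\bar{\calX}}=\mathrm{D}\bar{\eta}\lb\bar{\calX}\rb\lsb\bar{\lambda}_{\bar{\calX}}\rsb-\mathrm{D}\bar{\lambda}\lb\bar{\calX}\rb\lsb\bar{\eta}_{\bar{\calX}}\rsb$; in particular $\bar{\zeta}_{\bar{\calX}}$ is automatically a legitimate tangent vector.

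For piece (i), I would substitute the bracket formula and the expansion of $\mathrm{D}\bar{g}\lb\bar{\lambda},\bar{\eta}\rb\lb\bar{\calX}\rb\lsb\bar{\xi}_{\bar{\calX}}\rsb$ (and its two cyclic analogues) into the Koszul formula and group the resulting terms into two classes: those in which $\mathrm{D}$ differentiates a tangent vector, and those in which $\mathrm{D}$ differentiates the metric coefficients ${\mH^k}^\tran\mH^k$ (equivalently, the interface matrices $\calX^{\leq k-1},\calX^{\geq k+1}$). The first class, together with the three Lie-bracket terms $\bar{g}_{\bar{\calX}}\lb\cdot,\lsb\cdot,\cdot\rsb_{\bar{\calX}}\rb$, telescopes to exactly $2\bar{g}_{\bar{\calX}}\lb\mathrm{D}\bar{\lambda}\lb\bar{\calX}\rb\lsb\bar{\xi}_{\bar{\calX}}\rsb,\bar{\eta}_{\bar{\calX}}\rb$ — this is the familiar cancellation behind the Levi-Civita connection of a non-constant metric on a linear space, and is precisely what the displayed computation records. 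The second class is the sum $\alpha_1+\cdots+\alpha_4-\alpha_5-\alpha_6$.

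Piece (ii) is where the real work lies, and it is the step I expect to be the main obstacle: each $\alpha_i$ is an inner product in which one core $\eta^k$ of $\bar{\eta}_{\bar{\calX}}$ appears linearly but is paired against a tensor assembled from $\bar{\calX}$, $\bar{\xi}_{\bar{\calX}}$, $\bar{\lambda}_{\bar{\calX}}$ with one interface matrix replaced by a modified one ($\calX_{\xi}^{\leq k-1,j}$, $\calX_{\lambda}^{\geq k+1,j}$, etc.), and one must produce a single tensor $\bar{\zeta}_{\bar{\calX}}$ with $\bar{g}_{\bar{\calX}}\lb\bar{\eta}_{\bar{\calX}},\bar{\zeta}_{\bar{\calX}}\rb=\alpha_1+\cdots-\alpha_6$. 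The recipe is the Riesz-representation trick already used for the Riemannian gradient in Lemma~\ref{lem: Rie grad derive}: using the $k$-th unfolding identity~\eqref{eq: kth unfolding} write the pairing as $\langle \calW^{<k>},\lb\mI_{n_k}\otimes\calX^{\leq k-1}\rb L\lb\eta^k\rb{\calX^{\geq k+1}}^\tran\rangle$ for the appropriate tensor $\calW$, insert the oblique projectors $\calX^{\leq k-1}{\mL^{k-1}}^{-1}{\calX^{\leq k-1}}^\tran$ and $\calX^{\geq k+1}{\mR^{k+1}}^{-1}{\calX^{\geq k+1}}^\tran$, and read off that the $k$-th core of the representing tensor has left unfolding $\lb\mI_{n_k}\otimes{\mL^{k-1}}^{-1}{\calX^{\leq k-1}}^\tran\rb \calW^{<k>}\calX^{\geq k+1}{\mR^{k+1}}^{-1}$. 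Applying this to $\alpha_1,\alpha_2$ is already shown; for $\alpha_3,\dots,\alpha_6$ the identical recipe applies, the only care being (a) whether the modified factor in $\calW$ corresponds to an index $j<k$ or $j>k$, which determines whether it is absorbed into the left factor $\lb\mI_{n_k}\otimes{\mL^{k-1}}^{-1}{\calX_{\xi}^{\leq k-1,j}}^\tran\calX^{\leq k-1}\rb$ or the right factor ${\calX^{\geq k+1}}^\tran\calX_{\xi}^{\geq k+1,j}{\mR^{k+1}}^{-1}$ (cf.~\eqref{eq: recursive relation}), and (b) keeping track of the signs, with $\alpha_5,\alpha_6$ contributing the negative cross terms ${\calX_{\lambda}^{\leq k-1,j}}^\tran\calX_{\xi}^{\leq k-1,j}$ and ${\calX_{\xi}^{\leq k-1,j}}^\tran\calX_{\lambda}^{\leq k-1,j}$. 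Summing the six representatives and halving gives exactly the tensor $\frac{1}{2}\bar{\zeta}_{\bar{\calX}}$ written before the lemma.

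For piece (iii), the metric $\bar{g}_{\bar{\calX}}$ is genuinely positive definite on the ambient space: by~\eqref{eq: premetric} each summand is a squared Euclidean norm $\fronorm{\phi\lb\{\calX^1,\dots,\xi^k,\dots,\calX^d\}\rb}^2$, which vanishes only if $\xi^k=0$ because the interface matrices $\calX^{\leq k-1}$, $\calX^{\geq k+1}$ have full column rank. Hence from $\bar{g}_{\bar{\calX}}\lb\nabla_{\bar{\xi}_{\bar{\calX}}}\bar{\lambda}-\mathrm{D}\bar{\lambda}\lb\bar{\calX}\rb\lsb\bar{\xi}_{\bar{\calX}}\rsb-\frac{1}{2}\bar{\zeta}_{\bar{\calX}},\bar{\eta}_{\bar{\calX}}\rb=0$ for all $\bar{\eta}_{\bar{\calX}}$ we conclude $\nabla_{\bar{\xi}_{\bar{\calX}}}\bar{\lambda}=\mathrm{D}\bar{\lambda}\lb\bar{\calX}\rb\lsb\bar{\xi}_{\bar{\calX}}\rsb+\frac{1}{2}\bar{\zeta}_{\bar{\calX}}$, which is~\eqref{eq: riemannian connection}. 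No separate check of symmetry or metric compatibility is needed: the Koszul formula guarantees that the $\nabla$ so obtained is the unique Levi-Civita connection of $\bar{g}$, and by~\cite[Proposition 5.3.3]{absil2009optimization} its horizontal lift descends to the Riemannian connection on the quotient.
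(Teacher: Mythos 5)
Your proposal is correct and takes the same route as the paper: expand the Koszul formula, observe that the terms in which $\mathrm{D}$ acts on a vector field (together with the Lie-bracket terms) telescope to $2\bar{g}_{\bar{\calX}}\lb\mathrm{D}\bar{\lambda}\lb\bar{\calX}\rb\lsb\bar{\xi}_{\bar{\calX}}\rsb,\bar{\eta}_{\bar{\calX}}\rb$, rewrite the residual $\alpha_1+\cdots+\alpha_4-\alpha_5-\alpha_6$ via the Riesz trick already used for the Riemannian gradient, and conclude by nondegeneracy of $\bar{g}_{\bar{\calX}}$. You are, if anything, more explicit than the paper on two points the paper treats as understood --- the telescoping cancellation behind the $2\bar{g}(\mathrm{D}\bar{\lambda}[\bar{\xi}],\bar{\eta})$ term, and the positive-definiteness of the metric (full column rank of the interface matrices) that licenses cancelling $\bar{\eta}$ --- but these are elaborations, not a different argument.
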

The  horizontal lift  of the Riemannian Hessian of  $f$ under the  metric $\bar{g}$~\eqref{eq: premetric} is given by~\cite{absil2009optimization}
\begin{align*}
    \overline{\Hess f\lb \lsb\bar{\calX}\rsb\rb\lsb\xi_{\lsb\bar{\calX}\rsb}\rsb} = \calP_{\bar{\calX}}^{\calH}\lb \Hess \bar{f}\lb \bar{\calX}\rb\lsb \bar{\xi}_{\bar{\calX}}\rsb\rb,
\end{align*}
where $\bar{\xi}_{\bar{\calX}}$ is the horizontal lift of $\xi_{\lsb\bar{\calX}\rsb}$ at $\bar{\calX}$.
According to~\cite[Definition 5.5.1]{absil2009optimization},  the Riemannian
Hessian of $\bar{f}$ at $\bar{\calX}$ can be computed via the Riemannian connection~\eqref{eq: riemannian connection}:
\begin{align*}
    \Hess \bar{f}\lb \bar{\calX}\rb\lsb \bar{\xi}_{\bar{\calX}}\rsb = \nabla_{\bar{\xi}_{\bar{\calX}}}\grad \bar{f}.
\end{align*}

\subsection{Retraction and Vector Transport}
 Let $\calM$ be a general manifold. The tangent space of the manifold $\calM$ at $x$ is denoted by $T_x\calM$ and the tangent bundle is denoted by $T\calM$. 
A retraction~\cite{absil2009optimization} is a smooth mapping from the tangent bundle to the manifold such that, for all $x\in\calM$, $\eta_x\in T_x\calM$, (i) $R_{x}\lb 0_{x}\rb = x$ where $0_{x}$ denotes the the zero element of $T_{x}\calM$, and (ii) $\frac{d}{dt}R_{x}\lb t\eta_{x}\rb\vert_{t = 0} = \eta_{x}$.
By~\cite[Section 4.1.2]{absil2009optimization}, a retraction on the quotient manifold $\overline{\calM}_{\vr}/\calG$ is given by
\begin{align}\label{eq: retraction quotient}
R^{Q}_{\lsb\bar{\calX}\rsb}\lb\xi_{\lsb\bar{\calX}\rsb}\rb = \pi\lb R^{Q}_{\bar{\calX}}\lb\bar{\xi}_{\bar{\calX}}\rb\rb  =\lsb \bar{R}^{Q}_{\bar{\calX}}\lb\bar{\xi}_{\bar{\calX}}\rb\rsb,  
\end{align}
where $\bar{R}^Q$ is a retraction   defined on the total space $\overline{\calM}_{\vr}$,
\begin{align}\label{eq: retraction total space}
    \bar{R}^{Q}_{\bar{\calX}}\lb\bar{\xi}_{\bar{\calX}}\rb =\left\lbrace \left\lbrace \calX^k+\xi^k\right\rbrace_{k= 1}^d\right\rbrace.
\end{align}

A vector transport~\cite{absil2009optimization}: $T\calM\oplus T\calM\rightarrow T\calM: \lb\eta_x,\xi_x\rb\rightarrow\calT_{\eta_x}\xi_x$ associated with a retraction $R$ is a smooth mapping 
such that, for all $x\in \calM$, $\eta_x,\xi_x\in T_x\calM$, (i) $\calT_{\eta_x}\xi_x\in T_{R_x\lb\eta_x\rb}\calM$, (ii) $\calT_{0_x}\xi_x = \xi_x$, and (iii) $\calT_{\eta_x}$ is a linear map. Here, $\oplus$ denotes the Whitney sum~\cite[P.169]{absil2009optimization}.
 As shown in~\cite[Section 8.1.4]{absil2009optimization}, 
\begin{align*}
\overline{\calT^Q_{\eta_{\lsb\bar{\calX}\rsb}}\xi_{\lsb\bar{\calX}\rsb}}_{\bar{\calX}+\bar{\eta}_{\bar{\calX}}}:= \calP^\calH_{\bar{\calX}+\bar{\eta}_{\bar{\calX}}}\bar{\xi}_{\bar{\calX}} 
\end{align*}
defines a vector transport on $T_{\lsb\bar{\calX}\rsb}\overline{\calM}_{\vr}/\calG$. In fact, it can be shown that the above vector transport coincides with  the differential of the retraction~\eqref{eq: retraction quotient}:
\begin{align*}
    \mathrm{D} R^{Q}_{\lsb\bar{\calX}\rsb}\lb \eta_{\lsb\bar{\calX}\rsb}\rb\lsb \xi_{\lsb\bar{\calX}\rsb}\rsb& = \frac{d}{d t}R^{Q}_{\lsb\bar{\calX}\rsb}\lb \eta_{\lsb\bar{\calX}\rsb}+t\xi_{\lsb\bar{\calX}\rsb}\rb\bigg|_{t = 0}\\
    &= \frac{d}{dt}\pi\circ\bar{R}^{Q}_{\bar{\calX}}\lb \bar{\eta}_{\bar{\calX}}+t\bar{\xi}_{\bar{\calX}}\rb\bigg|_{t = 0}\\
    & = \mathrm{D}\pi\circ\bar{R}^{Q}_{\bar{\calX}}\lb \bar{\eta}_{\bar{\calX}}\rb\lsb\bar{\xi}_{\bar{\calX}}\rsb\\
    & = \mathrm{D}\pi\lb\bar{R}^{Q}_{\bar{\calX}}\lb \bar{\eta}_{\bar{\calX}}\rb \rb\lsb\mathrm{D}\bar{R}^{Q}_{\bar{\calX}}\lb \bar{\eta}_{\bar{\calX}}\rb\lsb\bar{\xi}_{\bar{\calX}}\rsb\rsb\\
    & = \mathrm{D}\pi\lb \bar{\calX}+\bar{\eta}_{\bar{\calX}}\rb\lsb\bar{\xi}_{\bar{\calX}}\rsb\\
    & = \mathrm{D}\pi\lb \bar{\calX}+\bar{\eta}_{\bar{\calX}}\rb\lsb\calP^\calH_{\bar{\calX}+\bar{\eta}_{\bar{\calX}}}\bar{\xi}_{\bar{\calX}} \rsb.
\end{align*}

\section{Algorithms}

\subsection{Riemannian Gradient Descent and Conjugate Gradient Methods}
\paragraph{Riemannian gradient descent method.}
 The Riemannian gradient descent method under the quotient geometry (RGD (Q)) for the low-rank tensor completion problem  is presented in Algorithm~\ref{alg: RGD(Q)}. Let $\bar{\calX}_t$ be the current estimator.  RGD (Q) updates $\bar{\calX}_t$ along the negative Riemannian gradient direction given in~\eqref{eq: rg}, followed by retraction.
The step size $\alpha_t$ at $t$-th iteration is calculated by the standard backtracking line search procedure.  For $t = 0$, we take $\alpha_0^{0} = 1$ as the initial step size. For $t\geq 1$, the following  Riemannian Barzilai–Borwein (RBB) step size rule~\cite{iannazzo2018riemannian} without safeguard will be considered, 
\begin{align*}
\alpha_t^0 := \frac{\bar{g}_{\bar{\calX}_t}\lb s_t,s_t\rb}{\left\vert\bar{g}_{\bar{\calX}_t}\lb s_t,y_t\rb\right\vert},
\end{align*} 
where $s_t = \alpha_{t-1}\calP_{\bar{\calX}_t}^{\calH}\lb\bar{\xi}_{t-1}\rb$ and $y_t = \bar{\xi}_t-\calP_{\bar{\calX}_t}^{\calH}\lb\bar{\xi}_{t-1}\rb$.
{\LinesNumberedHidden
\begin{algorithm}[ht!]
	\caption{Riemannian Gradient Descent (RGD (Q))}
	\label{alg: RGD(Q)}
	\KwIn{Initial point $\bar{\calX}_0\in \overline{\calM}_{\vr}$, $\beta,\sigma\in (0,1)$, tolerance $\varepsilon>0$}
	\For{$t = 0,1,\cdots$}{ 
	Compute $\bar{\xi}_t = -\grad\bar{f}\lb\bar{\calX}_t\rb$ using~\eqref{eq: rg};\\
    Check convergence: if $\sqrt{\bar{g}_{\bar{\calX}_t}\lb\bar{\xi}_t,\bar{\xi}_t \rb}<\varepsilon$, then break;\\
    Backtracking line search: given $\alpha_t^0$, find the smallest $\ell\geq 0$ such that for $\alpha_t = \alpha_t^0\beta^{\ell}$, 
    \begin{align*}
        \bar{f}\lb\bar{\calX}_t\rb-\bar{f}\lb \bar{R}_{ \bar{\calX}_t}^Q\lb \alpha_t\bar{\xi}_t \rb\rb  \geq \sigma\alpha_t\bar{g}_{\bar{\calX}_t}\lb \bar{\xi}_t,\bar{\xi}_t\rb.
    \end{align*}
    \\
    Update: $\bar{\calX}_{t+1} = \bar{R}_{ \bar{\calX}_t}^Q\lb \alpha_t\bar{\xi}_t \rb$;
	} 
 \KwOut{$\bar{\calX}_t\in\overline{\calM}_{\vr}$.}
\end{algorithm}}


 Let $n := \max_k n_k$ and $r := \max_k r_k$. Calculating the Riemannian gradient~\eqref{eq: rg} can be split into three steps. We first compute the term $\lb\mI_{n_k}\otimes {\calX_t^{\leq k-1}}^\tran\rb \lb  \calP_{\Omega}\lb\phi\lb\bar{\calX}_t\rb\rb-\calP_{\Omega}\lb\calT\rb \rb^{<k>}\calX_t^{\geq k+1}$ which can be done efficiently by exploiting the sparsity of $\calP_{\Omega}\lb\phi\lb\bar{\calX}_t\rb\rb-\calP_{\Omega}\lb\calT\rb$. As shown in~\cite[Algorithm 2]{steinlechner2016riemannian}, the computation of this term requires $O\lb d\vert\Omega\vert r^2\rb$ floating point operations (flops). Then we form the matrices $\mL_t^k$ and $\mR_t^k$ recursively via the equation~\eqref{eq: recursive relation} which costs $O\lb dnr^3 \rb$ flops, see Algorithm~\ref{alg: calculate LR}. Finally, computing the inverse of $\mL_t^k, \mR^k_t$ and the matrix product  requires $O\lb dnr^3+dr^3 \rb$ flops. Hence the total cost of computing the Riemannian gradient is $O\lb d\vert\Omega\vert r^2+dnr^3 \rb$.
The main computational complexity of calculating the backtracking line search procedure lies in computing the horizontal projection in the RBB step size which  requires $O\lb dnr^6\rb$ flops to form the coefficient matrix in~\eqref{eq: linear equation} and $O\lb dr^6\rb$ flops to solve the linear system~\cite[P.96]{quarteroni2010numerical}.  In conclusion, the total cost of one iteration of RGD (Q) is $O\lb d\vert\Omega\vert r^2+dnr^6 \rb$. Note that the  information theoretic minimum of the number of observations $|\Omega|$ should be $O(dnr^2)$ (the dimension of the manifold~\eqref{eq: embedded manifold}), leading to the overall $O(d^2nr^4+dnr^6)$ computation complexity. In addition, if we solve the block diagonal linear system~\eqref{eq: linear equation} by the conjugate gradient method (note that the coefficient matrix is positive definite as shown in Lemma~\ref{lem: positive definite}), the overall computational complexity of RGD (Q) will be $O(d^2nr^4)$, which is  the same as that of Riemannian conjugate gradient algorithm presented in~\cite{steinlechner2016riemannian}.  
{\LinesNumberedHidden
\begin{algorithm}[ht!]
	\label{alg: calculate LR}
	\caption{Computation of the interface matrix products}
	\KwIn{ $\bar{\calX} = \left\lbrace\calX^1,\cdots,\calX^d\right\rbrace$ }
  $\mL^0 = \mR^{d+1} = 1$, $\mL^1 = L\lb \calX^1\rb^\tran L\lb \calX^1\rb$, $\mR^d = R\lb\calX^d\rb R\lb\calX^d\rb^\tran$; \\
	\For{$k = 2,\cdots,d-1$}{  
 $\mL^k = L\lb \calX^k\rb^\tran\lb \mI_{n_k}\otimes\mL^{k-1}\rb L\lb \calX^k\rb$, $\mR^{d-k+1} = R\lb\calX^{d-k+1}\rb\lb \mR^{d-k+2}\otimes\mI_{n_{d-k+1}}\rb R\lb\calX^{d-k+1}\rb^\tran$;
	}
 \KwOut{$\left\lbrace \mL^k\right\rbrace_{k = 0}^{d-1},\left\lbrace\mR^k\right\rbrace_{k = 2}^{d+1}$.}
\end{algorithm}}


\paragraph{Riemannian conjugate gradient method.}
The Riemannian conjugate gradient method  under the quotient geometry (RCG (Q)) for low-rank tensor completion in the tensor train format is presented in Algorithm~\ref{alg: RCG(Q)}. Let $\bar{\calX}_t$ be the current estimate.  
The conjugate direction at  $t$-th iteration is 
\begin{align*}
\bar{\eta_{t}} =
-\grad\bar{f}\lb\bar{\calX}_{t}\rb+\beta_{t}\calP_{\bar{\calX}_{t}}^\calH\lb \bar{\eta}_{t-1}\rb.
\end{align*}
 In this paper,  the following  modified Hestenes-Stiefel rule~\cite{gilbert1992global} will be chosen  to calculate $\beta_t$:
\begin{align}\label{eq: HS rule}
\beta_t = \max\left\lbrace  0,\frac{\bar{g}_{\bar{\calX}_t}\lb \grad\bar{f}\lb\bar{\calX}_{t}\rb-\calP_{\bar{\calX}_{t}}^\calH\lb\grad\bar{f}\lb\bar{\calX}_{t-1}\rb\rb ,  \grad\bar{f}\lb\bar{\calX}_{t}\rb\rb}{\bar{g}_{\bar{\calX}_t}\lb \grad\bar{f}\lb\bar{\calX}_{t}\rb-\calP_{\bar{\calX}_{t}}^\calH\lb\grad\bar{f}\lb\bar{\calX}_{t-1})\rb\rb, \calP_{\bar{\calX}_{t}}^\calH\lb \bar{\eta}_{t-1}\rb \rb} \right\rbrace.
\end{align}
Then, RCG (Q) updates $\bar{\calX}_t$ along the conjugate direction, followed by retraction. 
Regarding the step size, the optimal choice of $\alpha_t$ would be the minimizer of the objective function: $\alpha_t = \arg\min_{\alpha\in \R}\bar{f}\lb \bar{R}^Q_{\bar{\calX_t}}\lb\alpha\bar{\eta}_t\rb\rb$. 
However, for the retraction~\eqref{eq: retraction total space}, the exact  $\alpha_t$ is expensive to calculate, since the minimization problem is a degree $d^2$ polynomial in $\alpha$. Inspired by~\cite{kasai2016low}, we instead consider a degree $2$ polynomial  approximation  of the  minimization problem,
\begin{align*}
\arg\min_{\alpha\in\R}~\fronorm{\calP_{\Omega}\lb\phi\lb \bar{\calX}_t\rb +\alpha\sum_{k = 1}^d\phi\lb\left\lbrace\calX_t^1,\cdots,\eta_t^k,\cdots,\calX_t^d\right\rbrace\rb\rb-\calP_{\Omega}\lb \calT\rb}^2,
\end{align*}
which admits a closed-form solution given by
\begin{align}\label{eq: alphat}
\alpha_t = \frac{\left\langle \calP_{\Omega}\lb \sum_{k = 1}^d\phi\lb\left\lbrace\calX_t^1,\cdots,\eta_t^k,\cdots,\calX_t^d\right\rbrace\rb \rb,  \calP_{\Omega}\lb \calT\rb-\calP_{\Omega}\lb\phi\lb\bar{\calX}_t\rb\rb  \right\rangle}{\left\langle \calP_{\Omega}\lb \sum_{k = 1}^d\phi\lb\left\lbrace\calX_t^1,\cdots,\eta_t^k,\cdots,\calX_t^d\right\rbrace\rb \rb,  \calP_{\Omega}\lb \sum_{k = 1}^d\phi\lb\left\lbrace\calX_t^1,\cdots,\eta_t^k,\cdots,\calX_t^d\right\rbrace\rb \rb \right\rangle}.
\end{align}
{\LinesNumberedHidden
\begin{algorithm}[ht!]
	\caption{Riemannian Conjugate Gradient (RCG (Q))}
	\label{alg: RCG(Q)}
	\KwIn{Initial point $\bar{\calX}_0 \in\overline{\calM}_{\vr}$, tolerance $\varepsilon>0$}
    Compute $\bar{\eta}_{-1} = 0$;\\
	\For{$t = 0,1,\cdots$}{ 
        Set $\bar{\xi}_{t} = -\grad\bar{f}\lb\bar{\calX}_{t}\rb$ using~\eqref{eq: rg};\\
         Check convergence: if $\sqrt{\bar{g}_{\bar{\calX}_{t}}\lb\bar{\xi}_{t},\bar{\xi}_{t} \rb}<\varepsilon$, then break;\\
        Compute $\beta_{t}$ using~\eqref{eq: HS rule} and set
$\bar{\eta}_{t} = -\bar{\xi}_t+\beta_{t}\calP_{\bar{\calX}_{t}}^{\calH}\lb \bar{\eta}_{t-1}\rb$;\\
	Compute step size $\alpha_t$ using~\eqref{eq: alphat};\\
        Update: $\bar{\calX}_{t+1} = \bar{R}_{ \bar{\calX}_t}^Q\lb \alpha_t\bar{\eta}_t \rb$;\\
        }
\KwOut{$\bar{\calX}_{t}\in\overline{\calM}_{\vr}$.}
\end{algorithm}}

The main computational complexity of computing the conjugate direction lies in the calculation of the Riemannian gradient  and  the horizontal projection which requires $O\lb d\vert\Omega\vert r^2+ dnr^6+dr^6\rb$ flops. 
Additionally, it takes $O\lb d\vert\Omega\vert r^2\rb$ flops to compute the step size $\alpha_t$~\cite[Section 4.5]{steinlechner2016riemannian}. Thus,  the leading order per iteration computational cost of RCG (Q) is $ O \lb d\vert\Omega\vert r^2+dnr^6 \rb$.

\subsection{Riemannian Gauss-Newton Method}\label{sec: RGNQ}
The geometric Newton method for a real-valued function $f:\overline{\calM}_{\vr}/\calG\rightarrow\R$ requires to compute the Newton direction $\xi_{\lsb \bar{\calX}\rsb}\in T_{\lsb\bar{\calX}\rsb}\overline{\calM}_{\vr}/\calG$ which is  the solution of the  equation
\begin{align*}
\Hess f\lb\lsb\bar{\calX}\rsb\rb\lsb\xi_{\lsb \bar{\calX}\rsb}\rsb = -\grad f\lb\lsb\bar{\calX}\rsb\rb.
\end{align*}
Lifting both sides of this equation to the horizontal space at $\bar{\calX}$ yields the  linear equation~\cite[Section 9.12]{boumal2020introduction}
\begin{align}\label{eq: newton equa}
\calP_{\bar{\calX}}^{\calH}\lb \Hess \bar{f}\lb\bar{\calX}\rb\lsb\bar{\xi}_{\bar{\calX}}\rsb\rb = -\grad \bar{f}\lb\bar{\calX}\rb,
\end{align}
where $\bar{f} = f\circ\pi:\overline{\calM}_{\vr}\rightarrow\R$ and  $\bar{\xi}_{\bar{\calX}}$ is the horizontal lift of $\xi_{\lsb \bar{\calX}\rsb}$ at $\bar{\calX}$. As discussed in Section~\ref{sec: rieconn and riehess}, calculating  the Riemannian Hessian is quite involved. Instead, we consider  the Riemannian Gauss-Newton method which is an approximation of the geometric Newton method for the case when $\bar{f}\lb\bar{\calX}\rb = \frac{1}{2}\fronorm{\bar{F}\lb\bar{\calX}\rb}^2$.

Notice that the equation~\eqref{eq: newton equa} is equivalent to finding a $\bar{\xi}_{\bar{\calX}}\in\calH_{\bar{\calX}}$ such that for all $\bar{\eta}\in T_{\bar{\calX}}\overline{\calM}_{\vr}$,
\begin{align*}
    0 &= \bar{g}_{\bar{\calX}}\lb \calP_{\bar{\calX}}^{\calH}\lb \Hess \bar{f}\lb\bar{\calX}\rb\lsb\bar{\xi}_{\bar{\calX}}\rsb\rb , \bar{\eta} \rb +\bar{g}_{\bar{\calX}}\lb\grad \bar{f}\lb\bar{\calX}\rb,\bar{\eta}\rb\\
    &=\bar{g}_{\bar{\calX}}\lb \calP_{\bar{\calX}}^{\calH}\lb \Hess \bar{f}\lb\bar{\calX}\rb\lsb\bar{\xi}_{\bar{\calX}}\rsb\rb , \calP_{\bar{\calX}}^{\calH}\bar{\eta} \rb +\bar{g}_{\bar{\calX}}\lb\grad \bar{f}\lb\bar{\calX}\rb,\calP_{\bar{\calX}}^{\calH}\bar{\eta}\rb\\
& = \bar{g}_{\bar{\calX}}\lb  \Hess \bar{f}\lb\bar{\calX}\rb\lsb\bar{\xi}_{\bar{\calX}}\rsb ,\calP_{\bar{\calX}}^{\calH} \bar{\eta} \rb +\bar{g}_{\bar{\calX}}\lb\grad \bar{f}\lb\bar{\calX}\rb,\calP_{\bar{\calX}}^{\calH}\bar{\eta}\rb,
\end{align*}
or equivalently, 
\begin{align*}
    0  = \mathrm{D} \bar{f}\lb\bar{\calX}\rb\lsb\calP_{\bar{\calX}}^{\calH}\bar{\eta}\rsb+ \nabla^2\bar{f}\lb\bar{\calX}\rb\lsb \bar{\xi}_{\bar{\calX}},\calP_{\bar{\calX}}^{\calH} \bar{\eta}\rsb, 
\end{align*}
where  the  definition of the second covariant derivative $\nabla^2\bar{f}\lb\bar{\calX}\rb\lsb \cdot,\cdot\rsb$ can be found for example in~\cite[Section 5.6]{absil2009optimization}.
 Approximating $ \nabla^2\bar{f}\lb\bar{\calX}\rb\lsb \bar{\xi}_{\bar{\calX}},\calP_{\bar{\calX}}^{\calH} \bar{\eta}\rsb$ by $\left\langle \mathrm{D} \bar{F}\lb\bar{\calX}\rb\lsb\bar{\xi}_{\bar{\calX}}\rsb,  \mathrm{D} \bar{F}\lb\bar{\calX}\rb\lsb\calP_{\bar{\calX}}^{\calH} \bar{\eta} \rsb \right\rangle$ yields the Gauss-Newton equation~\cite[Section 8.4.1]{absil2009optimization}
\begin{align}\label{eq: GNQ equation}
0 &= \left\langle   \mathrm{D} \bar{F}\lb\bar{\calX}\rb\lsb\bar{\eta}\rsb,\bar{F}\lb\bar{\calX}\rb \right\rangle + \left\langle   \mathrm{D} \bar{F}\lb\bar{\calX}\rb\lsb \bar{\eta} \rsb, \mathrm{D} \bar{F}\lb\bar{\calX}\rb\lsb\bar{\xi}_{\bar{\calX}}\rsb \right\rangle, ~\text{for all}~ \bar{\eta}\in \calH_{\bar{\calX}}.
\end{align}
The Riemannian Gauss-Newton method under the quotient geometry (RGN (Q)) is presented in Algorithm~\ref{alg: RGN(Q)}. For the low-rank tensor train tensor completion problem, the efficient solution of~\eqref{eq: GNQ equation} will be presented in Section~\ref{sec: implemention detail} after we  establish the equivalence of the Riemannian Gauss-Newton methods under the quotient and embedded geometries.
{\LinesNumberedHidden
\begin{algorithm}[ht!]
	\label{alg: RGN(Q)}
	\caption{Riemannian Gauss-Newton under the quotient geometry (RGN (Q))}
	\KwIn{Initial point $\bar{\calX}_0 \in\overline{\calM}_{\vr}$, tolerance $\varepsilon>0$}
	\For{$t = 0,1,\cdots$}{ 
	Solving the Gauss Newton equation~\eqref{eq: GNQ equation} gives $\bar{\xi}_t\in \calH_{\bar{\calX}_t}$.\\
 Check convergence: if $\sqrt{\bar{g}_{\bar{\calX}_{t}}\lb\bar{\xi}_{t},\bar{\xi}_{t} \rb}<\varepsilon$, then break.\\
 Update $\bar{\calX}_{t+1} = \bar{R}_{ \bar{\calX}_t}^Q\lb \bar{\xi}_t \rb$.
        }
\KwOut{$\bar{\calX}_{t}\in\overline{\calM}_{\vr}$.}
\end{algorithm}}
\subsubsection{Equivalence of Riemannian Gauss-Newton Methods under the Quotient and Embedded Geometries}
Recall that the set of fixed  tensor train rank  tensors forms a smooth embedded submanifold $\calM_{\vr}$ of dimension $\sum_{k= 1}^d r_{k-1}n_kr_k-\sum_{k = 1}^{d-1}r_k^2$. 
Let $T_{\calX}\calM_{\vr}$ be the tangent space of $\calM_{\vr}$ at $\calX$.
For an objective function $h\lb\calX\rb = \frac{1}{2}\fronorm{F\lb\calX\rb}^2$ defined on $\calM_{\vr}$, the Gauss-Newton direction $\xi\in T_{\calX}\calM_{\vr}$ is the solution of the following  Gauss-Newton equation~\cite[Section 8.4]{absil2009optimization}
\begin{align}\label{eq: GNe}
    \left\langle \mathrm{D} F\lb\calX\rb\lsb \eta\rsb,F\lb\calX\rb\right\rangle+ \left\langle \mathrm{D} F\lb\calX\rb\lsb \eta\rsb,\mathrm{D} F\lb\calX\rb\lsb \xi\rsb\right\rangle = 0,~\text{for all}~\eta\in T_{\calX}\calM_{\vr}.
\end{align}
The Riemannian Gauss-Newton algorithm under the embedded geometry (RGN (E)) is given in Algorithm~\ref{alg: RGN(E)}, where $R^E(\cdot)$ is a retraction from $ T_{\calX}\calM_{\vr}$ to $\calM_{\vr}$. A typical retraction is~\cite{steinlechner2016riemannian,uschmajew2020geometric}
\begin{align}\label{eq: retraction TTSVD}
R^E_{\calX}\lb\xi\rb = \TTSVD\lb \calX+\xi\rb,
\end{align}
where the TT-SVD can be computed efficiently by the TT-rounding procedure~\cite{oseledets2011tensor}. 
{\LinesNumberedHidden
\begin{algorithm}[ht!]
	\label{alg: RGN(E)}
	\caption{Riemannian Gauss-Newton under the embedded geometry (RGN (E))}
	\KwIn{Initial point $\calX\in\calM_{\vr}$, tolerance $\varepsilon>0$}
	\For{$t = 0,1,\cdots$}{ 
	Solving the Gauss-Newton equation~\eqref{eq: GNe} gives $\xi_t\in T_{\calX_t}\calM_{\vr}$.\\
 Check convergence: if $\fronorm{\xi_t}<\varepsilon$, then break.\\
 Update $\calX_{t+1} = R^E_{\calX_t} \lb\xi_t\rb$.
        }
\KwOut{$\calX_t\in\calM_{\vr}$.}
\end{algorithm}}

Next, we will present a new form of retraction which enables us to establish the equivalence of the Riemannian Gauss-Newton methods under the quotient and embedded geometries.  To this end, we first show that 
the map $\mathrm{D}\phi\lb\bar{\calX}\rb\vert_{\calH_{\bar{\calX}}}: \calH_{\bar{\calX}}\rightarrow T_{\phi\lb\bar{\calX}\rb}\calM_{\vr}$ is bijective.
\begin{lemma}\label{lem: Dphi bijective}
The mapping $\mathrm{D}\phi\lb\bar{\calX}\rb\vert_{\calH_{\bar{\calX}}}:\calH_{\bar{\calX}}\rightarrow T_{\phi\lb\bar{\calX}\rb}\calM_{\vr}$ is bijective.
\end{lemma}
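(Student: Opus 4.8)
The plan is to deduce the lemma directly from the factorization $\phi = \Phi\circ\pi$ and the defining property of the horizontal space, so that essentially no computation is needed. The key input, available from the quotient-manifold structure of $\overline{\calM}_{\vr}/\calG$ established in~\cite{uschmajew2013geometry} together with the characterization~\eqref{eq: barX equivalent to barY}, is that $\mathrm{D}\phi(\bar{\calX}):T_{\bar{\calX}}\overline{\calM}_{\vr}\to T_{\phi(\bar{\calX})}\calM_{\vr}$ is surjective with $\ker\mathrm{D}\phi(\bar{\calX}) = \calV_{\bar{\calX}}$. Indeed, $\pi$ is a surjective submersion whose differential at $\bar{\calX}$ has kernel exactly the vertical space $\calV_{\bar{\calX}}$ (which, by definition, is the tangent space to the fibre $[\bar{\calX}]$), and $\Phi$ is a diffeomorphism onto the embedded manifold $\calM_{\vr}$; hence post-composing $\mathrm{D}\pi(\bar{\calX})$ with the linear isomorphism $\mathrm{D}\Phi([\bar{\calX}])$ alters neither the kernel nor the image. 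I would also use the defining splitting $T_{\bar{\calX}}\overline{\calM}_{\vr} = \calH_{\bar{\calX}}\oplus\calV_{\bar{\calX}}$ from~\eqref{eq: def of horizontal space}, and the dimension identity $\dim\overline{\calM}_{\vr}/\calG = \dim\calM_{\vr} = \sum_{k=1}^d r_{k-1}n_kr_k - \sum_{k=1}^{d-1}r_k^2$ recorded in Section~\ref{sec: gs}.

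With these facts in hand, injectivity of $\mathrm{D}\phi(\bar{\calX})\vert_{\calH_{\bar{\calX}}}$ is immediate: if $\bar{\xi}\in\calH_{\bar{\calX}}$ satisfies $\mathrm{D}\phi(\bar{\calX})[\bar{\xi}] = 0$, then $\bar{\xi}\in\ker\mathrm{D}\phi(\bar{\calX}) = \calV_{\bar{\calX}}$, hence $\bar{\xi}\in\calH_{\bar{\calX}}\cap\calV_{\bar{\calX}} = \{0\}$. For surjectivity there are two equally short routes. The dimension route: $\mathrm{D}\pi(\bar{\calX})$ restricts to an isomorphism $\calH_{\bar{\calX}}\to T_{[\bar{\calX}]}\overline{\calM}_{\vr}/\calG$, so $\dim\calH_{\bar{\calX}} = \dim\overline{\calM}_{\vr}/\calG = \dim\calM_{\vr} = \dim T_{\phi(\bar{\calX})}\calM_{\vr}$, and an injective linear map between vector spaces of the same finite dimension is bijective. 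The direct route: given $v\in T_{\phi(\bar{\calX})}\calM_{\vr}$, write $v = \mathrm{D}\phi(\bar{\calX})[\bar{\eta}]$ for some $\bar{\eta}\in T_{\bar{\calX}}\overline{\calM}_{\vr}$ by surjectivity of $\mathrm{D}\phi(\bar{\calX})$, decompose $\bar{\eta} = \calP_{\bar{\calX}}^{\calH}(\bar{\eta}) + \calP_{\bar{\calX}}^{\calV}(\bar{\eta})$, and use $\calV_{\bar{\calX}} = \ker\mathrm{D}\phi(\bar{\calX})$ to get $v = \mathrm{D}\phi(\bar{\calX})[\calP_{\bar{\calX}}^{\calH}(\bar{\eta})]$ with $\calP_{\bar{\calX}}^{\calH}(\bar{\eta})\in\calH_{\bar{\calX}}$.

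I do not expect a genuine obstacle here; the one point that deserves a word of justification is the claim $\ker\mathrm{D}\phi(\bar{\calX}) = \calV_{\bar{\calX}}$, i.e.\ that $\phi$ is a submersion onto $\calM_{\vr}$ whose fibres are precisely the equivalence classes. This is part of the quotient construction in~\cite{uschmajew2013geometry} (see also~\cite{holtz2012manifolds,uschmajew2020geometric}). If one wishes to keep the argument self-contained and not rely on $\Phi$ being a diffeomorphism, one can instead check from Proposition~\ref{pro: vertical space} that $\calV_{\bar{\calX}}\subseteq\ker\mathrm{D}\phi(\bar{\calX})$ --- each vertical vector is tangent to a curve inside the fibre $[\bar{\calX}]$, obtained by varying the matrices $\mA_k$ in~\eqref{eq: barX equivalent to barY}, along which $\phi$ is constant --- verify that $\calV_{\bar{\calX}}$ has dimension $\sum_{k=1}^{d-1}r_k^2$ (its parametrization by $(\mD_1,\dots,\mD_{d-1})$ is injective thanks to the full-rank conditions on $L(\calX^k)$ and $R(\calX^k)$), and verify that $\mathrm{D}\phi(\bar{\calX})$ is onto $T_{\phi(\bar{\calX})}\calM_{\vr}$ (its image contains the images of the coordinate directions, which already span the tangent space of the embedded manifold); then $\ker\mathrm{D}\phi(\bar{\calX}) = \calV_{\bar{\calX}}$ follows from rank-nullity together with the dimension of $\calM_{\vr}$, and one concludes exactly as above.
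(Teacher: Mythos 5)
Your proposal is correct and, in its ``direct route'' for surjectivity, coincides with the paper's own argument: the paper proves injectivity exactly as you do (via $\ker\mathrm{D}\phi(\bar{\calX}) = \calV_{\bar{\calX}}$ through the factorization $\phi = \Phi\circ\pi$, plus $\calH_{\bar{\calX}}\cap\calV_{\bar{\calX}} = \{0\}$), and for surjectivity it writes a given $\xi \in T_{\phi(\bar{\calX})}\calM_{\vr}$ as $\mathrm{D}\phi(\bar{\calX})[\bar{\xi}]$ using the explicit parametrization of the tangent space, then observes $\mathrm{D}\phi(\bar{\calX})[\calP^{\calH}_{\bar{\calX}}\bar{\xi}] = \mathrm{D}\phi(\bar{\calX})[\bar{\xi}] = \xi$ since vertical vectors lie in the kernel. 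Your alternative dimension-count route and the self-contained sketch of $\ker\mathrm{D}\phi(\bar{\calX}) = \calV_{\bar{\calX}}$ are sound extras, but not needed beyond what the paper does.
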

\begin{proof}
  Suppose that there is a tangent vector $\bar{\xi}\in\calH_{\bar{\calX}}$ such that $\mathrm{D}\phi\lb\bar{\calX}\rb\lsb\bar{\xi}\rsb  = 0.$ By the chain rule and the relation $\phi = \Phi\circ\pi$, one has
\begin{align*}
\mathrm{D}\pi\lb\bar{\calX}\rb\lsb\bar{\xi}\rsb &= \mathrm{D}\lb\Phi^{-1}\circ\phi\rb\lb\bar{\calX}\rb\lsb\bar{\xi}\rsb\\
& = \mathrm{D}\Phi^{-1}\lb\phi\lb\bar{\calX}\rb\rb\lsb\mathrm{D}\phi\lb\bar{\calX}\rb\lsb\bar{\xi}\rsb\rsb\\
& = 0,
\end{align*}
which implies  that $\bar{\xi}\in\calV_{\bar{\calX}}$.  Since $\calV_{\bar{\calX}}$ is the  orthogonal complement of  $\calH_{\bar{\calX}}$,  $\bar{\xi}$ must be the zero element.  Thus, $\mathrm{D}\phi\lb\bar{\calX}\rb\vert_{\calH_{\bar{\calX}}}$ is injective.  

To show that $\mathrm{D}\phi\lb\bar{\calX}\rb\vert_{\calH_{\bar{\calX}}}$ is surjective, first note that  any $\xi\in T_{\phi\lb\bar{\calX}\rb}\calM_{\vr}$ can be expressed as~\cite[Section 9.3.4]{uschmajew2020geometric}
\begin{align*}
\xi = \sum_{k = 1}^d\phi\lb\calX^1,\cdots,\xi^k,\cdots,\calX^d\rb = \mathrm{D}\phi\lb\bar{\calX}\rb\lsb\bar{\xi}\rsb,
\end{align*}
where $\bar{\xi} = \left\lbrace\xi^1,\cdots,\xi^d\right\rbrace$.
In addition, one has
\begin{align*}
\mathrm{D}\phi\lb\bar{\calX}\rb\lsb\calP_{\bar{\calX}}^{\calH}\bar{\xi}\rsb &= \mathrm{D}\phi\lb\bar{\calX}\rb\lsb\calP_{\bar{\calX}}^{\calH}\bar{\xi}+\calP_{\bar{\calX}}^{\calV}\bar{\xi}\rsb =\mathrm{D}\phi\lb\bar{\calX}\rb\lsb\bar{\xi}\rsb= \xi.
\end{align*}
Therefore, for any $\xi\in T_{\phi\lb\bar{\calX}\rb}\calM_{\vr}$, there is at least one point $\calP_{\bar{\calX}}^{\calH}\bar{\xi}\in\calH_{\bar{\calX}}$ such that $\mathrm{D}\phi\lb\bar{\calX}\rb\lsb \calP_{\bar{\calX}}^{\calH}\bar{\xi} \rsb = \xi$.  
\end{proof}
One can easily verify that $\bar{\calX}\in \calH_{\bar{\calX}}$ and $\calX\in T_{\calX}\calM_{\vr}$, where the tangent space $T_{\calX}\calM_{\vr}$ is specified in Section~\ref{sec: implemention detail}. In addition, it is not hard to see that $$\mathrm{D}\phi\lb\bar{\calX}\rb\lsb \bar{\calX}\rsb = d\cdot \phi\lb\bar{\calX}\rb.$$
The bijective property of $\mathrm{D}\phi\lb\bar{\calX}\rb\mid_{\calH_{\bar{\calX}}}$ allows us to define the following  specific retraction  on the embedded submanifold $\calM_{\vr}$:
\begin{align}\label{eq: equivalent retraction}
    R^E_{\calX}\lb\xi\rb = \phi\lb \lb\mathrm{D}\phi\lb\bar{\calX}\rb\mid_{\calH_{\bar{\calX}}}\rb^{-1}\lb d\cdot\calX+\xi\rb\rb,
\end{align}
where  $\xi\in T_{\calX}\calM_{\vr}$ and $\calX = \phi\lb\bar{\calX}\rb =\phi\lb\left\lbrace \calX^1,\calX^2,\cdots,\calX^d\right\rbrace\rb$.
The following lemma shows that $R^E$ defined in~\eqref{eq: equivalent retraction} is indeed a retraction.
\begin{lemma}\label{lem: Re retraction}
    $R^E$ defined in~\eqref{eq: equivalent retraction} is  a retraction.
\end{lemma}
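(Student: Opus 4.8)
The plan is to check the two defining axioms of a retraction together with smoothness, for the map in~\eqref{eq: equivalent retraction}. Before that I would confirm the formula is meaningful on a neighbourhood of the zero section of $T\calM_{\vr}$. Since $\calX=\phi\lb\bar{\calX}\rb$ lies in $T_{\calX}\calM_{\vr}$ (as recorded just before the lemma) and $\xi\in T_{\calX}\calM_{\vr}$, and $T_{\calX}\calM_{\vr}$ is a linear subspace, the tensor $d\cdot\calX+\xi$ belongs to $T_{\calX}\calM_{\vr}$; Lemma~\ref{lem: Dphi bijective} then makes $\bar{\xi}:=\lb\mathrm{D}\phi\lb\bar{\calX}\rb\mid_{\calH_{\bar{\calX}}}\rb^{-1}\lb d\cdot\calX+\xi\rb$ a well-defined element of $\calH_{\bar{\calX}}\subset T_{\bar{\calX}}\overline{\calM}_{\vr}$. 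Identifying $T_{\bar{\calX}}\overline{\calM}_{\vr}$ with its ambient space of core-tensor tuples, $\bar{\xi}$ is itself a tuple of core tensors; since $\bar{\xi}=\bar{\calX}\in\overline{\calM}_{\vr}$ when $\xi=0$ and the rank constraints defining $\overline{\calM}_{\vr}$ are open conditions, $\bar{\xi}\in\overline{\calM}_{\vr}$ for $\xi$ in a neighbourhood of $0_{\calX}$, whence $\phi\lb\bar{\xi}\rb\in\calM_{\vr}$.

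For the centering axiom I would invoke the two facts recorded just before the lemma: $\bar{\calX}\in\calH_{\bar{\calX}}$ and $\mathrm{D}\phi\lb\bar{\calX}\rb\lsb\bar{\calX}\rsb=d\cdot\phi\lb\bar{\calX}\rb=d\cdot\calX$. Together they say exactly that $\bar{\calX}$ is the $\lb\mathrm{D}\phi\lb\bar{\calX}\rb\mid_{\calH_{\bar{\calX}}}\rb$-preimage of $d\cdot\calX$, so $R^E_{\calX}\lb 0_{\calX}\rb=\phi\lb\bar{\calX}\rb=\calX$. For local rigidity, linearity of $\lb\mathrm{D}\phi\lb\bar{\calX}\rb\mid_{\calH_{\bar{\calX}}}\rb^{-1}$ together with the previous identity gives $\lb\mathrm{D}\phi\lb\bar{\calX}\rb\mid_{\calH_{\bar{\calX}}}\rb^{-1}\lb d\cdot\calX+t\xi\rb=\bar{\calX}+t\bar{\xi}$, so $R^E_{\calX}\lb t\xi\rb=\phi\lb\bar{\calX}+t\bar{\xi}\rb$ for $\lab t\rab$ small; differentiating at $t=0$ by the chain rule (the curve $t\mapsto\bar{\calX}+t\bar{\xi}$ has velocity $\bar{\xi}$) yields $\mathrm{D}\phi\lb\bar{\calX}\rb\lsb\bar{\xi}\rsb$, and since $\bar{\xi}\in\calH_{\bar{\calX}}$ this equals $\lb\mathrm{D}\phi\lb\bar{\calX}\rb\mid_{\calH_{\bar{\calX}}}\rb\lsb\bar{\xi}\rsb=\xi$ by the definition of $\bar{\xi}$. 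Hence $\frac{d}{dt}R^E_{\calX}\lb t\xi\rb\big|_{t=0}=\xi$.

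It remains to address smoothness, which I regard as the one point needing care. The map $\bar{\calX}\mapsto\lb\mathrm{D}\phi\lb\bar{\calX}\rb\mid_{\calH_{\bar{\calX}}}\rb^{-1}$ is smooth because $\mathrm{D}\phi$ depends smoothly on $\bar{\calX}$, the horizontal space $\calH_{\bar{\calX}}$ is the kernel of a linear map whose coefficients vary smoothly in $\bar{\calX}$ (cf.\ the system~\eqref{eq: linear equation}), and inversion of a smoothly varying family of linear isomorphisms is smooth; composing with $\phi$ and with a smooth local section of $\phi$ (which exists around any point of $\calM_{\vr}$ since $\phi=\Phi\circ\pi$ has surjective differential) yields a smooth $R^E$. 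Alternatively, and this also removes any ambiguity in the choice of representative, one can show $R^E_{\calX}\lb\xi\rb$ is independent of the chosen $\bar{\calX}\in\phi^{-1}\lb\calX\rb$: if $\bar{\calY}=\theta_{A}\lb\bar{\calX}\rb$ as in~\eqref{eq: barX equivalent to barY}, Lemma~\ref{lem: lift equivalent} identifies the two preimages up to the action $\theta_{A}$, while a direct telescoping computation shows $\phi\circ\theta_{A}=\phi$ on tuples, so the two constructions agree. Either route completes the proof that $R^E$ is a retraction.
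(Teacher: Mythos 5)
Your proof takes essentially the same route as the paper: the key step for local rigidity — using linearity to rewrite $\lb\mathrm{D}\phi\lb\bar{\calX}\rb\mid_{\calH_{\bar{\calX}}}\rb^{-1}\lb d\cdot\calX+t\xi\rb$ as $\bar{\calX}+t\bar{\xi}$ and then differentiating at $t=0$ — is exactly the paper's computation, and the centering axiom follows in the same way from $\bar{\calX}\in\calH_{\bar{\calX}}$ and $\mathrm{D}\phi\lb\bar{\calX}\rb\lsb\bar{\calX}\rsb=d\cdot\calX$. You go somewhat beyond the paper by also addressing smoothness, local membership of the preimage in $\overline{\calM}_{\vr}$, and independence of the chosen representative $\bar{\calX}\in\phi^{-1}\lb\calX\rb$, all of which the paper leaves implicit; these are reasonable points to spell out but do not change the underlying argument.
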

\begin{proof}
    First that  $R^E_{\calX}\lb 0\rb = \calX$ is evident. The bijective property of $\mathrm{D}\phi\lb\bar{\calX}\rb\mid_{\calH_{\bar{\calX}}}$ implies that  for any $\xi\in T_{\calX}\calM_{\vr}$, there is a unique $\bar{\xi} = \left\lbrace \xi^1,\xi^2,\cdots,\xi^d\right\rbrace \in\calH_{\bar{\calX}}$ satisfying $\mathrm{D}\phi\lb\bar{\calX}\rb\lsb \bar{\xi}\rsb = \xi$. Consequently,
    \begin{align*}
        \mathrm{D} R^E_{\calX}\lb0 \rb\lsb\xi\rsb &= \lim_{t\rightarrow 0}\frac{R^E_{\calX}\lb t\xi \rb- R^E_{\calX}\lb 0 \rb}{t}\\
        & = \lim_{t\rightarrow 0}\frac{\phi\lb \lb\mathrm{D}\phi\lb\bar{\calX}\rb\mid_{\calH_{\bar{\calX}}}\rb^{-1}\lb d\cdot\calX+t\xi\rb\rb- \phi\lb \lb\mathrm{D}\phi\lb\bar{\calX}\rb\mid_{\calH_{\bar{\calX}}}\rb^{-1}\lb d\cdot\calX\rb\rb}{t}\\
        & = \lim_{t\rightarrow 0}\frac{\phi\lb \bar{\calX}+t\bar{\xi}\rb - \phi\lb\bar{\calX}\rb}{t}\\
        & = \mathrm{D}\phi\lb\bar{\calX}\rb\lsb \bar{\xi}\rsb = \xi.
    \end{align*}
    Hence $\mathrm{D} R^E_{\calX}\lb0\rb\lsb\xi\rsb$ is an identity map.
\end{proof}
Now we are in position to establish the equivalence of the Riemannian Gauss-Newton methods under different geometries.
\begin{theorem}\label{thm: equivalent}
    Let $\bar{\xi}_{\bar{\calX}}\in\calH_{\bar\calX}$, $\xi\in T_{\calX}\calM_{\vr}$ be the solutions of the Gauss-Newton equations~\eqref{eq: GNQ equation} and~\eqref{eq: GNe} respectively, where  $\calX = \phi\lb\bar{\calX}\rb$. Then we have
    \begin{align*}
        \mathrm{D}\phi\lb\bar{\calX}\rb\lsb\bar{\xi}_{\bar{\calX}}\rsb = \xi.
    \end{align*}
Moreover, one has
\begin{align*}
    R^E_{\calX}\lb\xi\rb = \phi\lb \bar{R}^Q_{\bar{\calX}}\lb\bar{\xi}_{\bar{\calX}}\rb\rb,
\end{align*}
where the retractions $\bar{R}^Q\lb\cdot\rb$ and $R^E\lb\cdot\rb$ are defined in \eqref{eq: retraction total space} and~\eqref{eq: equivalent retraction}, respectively.
\end{theorem}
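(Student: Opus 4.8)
The whole argument rests on a single structural observation: the two least--squares residuals are compatible, $\bar F = F\circ\phi$ (from \eqref{eq: function barf} one has $\bar F(\bar\calX)=\calP_\Omega(\phi(\bar\calX))-\calP_\Omega(\calT)=F(\phi(\bar\calX))$), together with the bijectivity of $\mathrm D\phi(\bar\calX)\vert_{\calH_{\bar\calX}}$ from Lemma~\ref{lem: Dphi bijective}. First I would record the chain rule $\mathrm D\bar F(\bar\calX)[\bar\eta]=\mathrm D F(\calX)\big[\mathrm D\phi(\bar\calX)[\bar\eta]\big]$ for every $\bar\eta\in T_{\bar\calX}\overline{\calM}_{\vr}$, where $\calX=\phi(\bar\calX)$. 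Substituting this into the quotient Gauss--Newton equation~\eqref{eq: GNQ equation} and abbreviating $\eta:=\mathrm D\phi(\bar\calX)[\bar\eta]$ and $\zeta:=\mathrm D\phi(\bar\calX)[\bar\xi_{\bar\calX}]$, the equation turns into
\[
\big\langle \mathrm D F(\calX)[\eta],F(\calX)\big\rangle+\big\langle \mathrm D F(\calX)[\eta],\mathrm D F(\calX)[\zeta]\big\rangle = 0,\qquad \text{for all }\eta\in \mathrm D\phi(\bar\calX)[\calH_{\bar\calX}].
\]
By Lemma~\ref{lem: Dphi bijective}, $\mathrm D\phi(\bar\calX)$ carries $\calH_{\bar\calX}$ bijectively onto $T_\calX\calM_{\vr}$, so $\eta$ ranges over all of $T_\calX\calM_{\vr}$ while $\zeta\in T_\calX\calM_{\vr}$. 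Hence $\zeta$ satisfies the embedded Gauss--Newton equation~\eqref{eq: GNe}. Comparing $\zeta$ with $\xi$, the difference obeys $\mathrm D F(\calX)[\zeta-\xi]\perp \mathrm D F(\calX)[T_\calX\calM_{\vr}]$ while also lying in $\mathrm D F(\calX)[T_\calX\calM_{\vr}]$, so it vanishes; since \eqref{eq: GNe} determines its tangent-space solution uniquely, $\zeta=\xi$, i.e.\ $\mathrm D\phi(\bar\calX)[\bar\xi_{\bar\calX}]=\xi$.

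For the retraction identity I would invoke the two facts stated just before~\eqref{eq: equivalent retraction}: $\bar\calX\in\calH_{\bar\calX}$ and $\mathrm D\phi(\bar\calX)[\bar\calX]=d\cdot\phi(\bar\calX)=d\cdot\calX$. Since $\calH_{\bar\calX}$ is a linear subspace and $\mathrm D\phi(\bar\calX)$ is linear, $\bar\calX+\bar\xi_{\bar\calX}\in\calH_{\bar\calX}$ and, using the first part, $\mathrm D\phi(\bar\calX)[\bar\calX+\bar\xi_{\bar\calX}]=d\cdot\calX+\xi$. Therefore $\big(\mathrm D\phi(\bar\calX)\vert_{\calH_{\bar\calX}}\big)^{-1}(d\cdot\calX+\xi)=\bar\calX+\bar\xi_{\bar\calX}$, and plugging this into the definition~\eqref{eq: equivalent retraction} gives $R^E_\calX(\xi)=\phi\big(\bar\calX+\bar\xi_{\bar\calX}\big)$. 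Finally $\bar\calX+\bar\xi_{\bar\calX}$ is, as a tuple of core tensors, exactly $\{\calX^k+\xi^k\}_{k=1}^d=\bar R^Q_{\bar\calX}(\bar\xi_{\bar\calX})$ by~\eqref{eq: retraction total space}, whence $R^E_\calX(\xi)=\phi\big(\bar R^Q_{\bar\calX}(\bar\xi_{\bar\calX})\big)$, which is the asserted identity.

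I expect the main obstacle to be the first part, specifically the careful book-keeping: verifying that as $\bar\eta$ runs over the horizontal space the image vectors $\mathrm D\phi(\bar\calX)[\bar\eta]$ cover exactly $T_\calX\calM_{\vr}$ (so that no test directions are lost when translating between the two Gauss--Newton systems), and disposing of the uniqueness of the embedded Gauss--Newton solution cleanly. Once $\bar F=F\circ\phi$ and Lemma~\ref{lem: Dphi bijective} are available, everything reduces to linear algebra and the linearity of $\mathrm D\phi(\bar\calX)$.
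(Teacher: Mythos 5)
Your proof takes the same route as the paper's: apply the chain rule via $\bar F = F\circ\phi$, use Lemma~\ref{lem: Dphi bijective} to transport test directions bijectively from $\calH_{\bar\calX}$ to $T_{\calX}\calM_{\vr}$ so the two Gauss--Newton equations become identical, and then verify the retraction identity using $\bar\calX\in\calH_{\bar\calX}$, $\mathrm D\phi(\bar\calX)[\bar\calX]=d\cdot\calX$, and the linearity of $\mathrm D\phi(\bar\calX)|_{\calH_{\bar\calX}}$. You merely spell out a few steps the paper condenses (the orthogonality argument showing $\zeta=\xi$, and the inversion $(\mathrm D\phi(\bar\calX)|_{\calH_{\bar\calX}})^{-1}(d\calX+\xi)=\bar\calX+\bar\xi_{\bar\calX}$), both of which match the paper's reasoning in substance.
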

\begin{proof}
By the chain rule and the relation $\bar{F} = F\circ \phi$, the Gauss-Newton equation~\eqref{eq: GNQ equation} can be rewritten as
\begin{align} \label{eq: one more step RGNQ}
0 & = \left\langle   \mathrm{D} \bar{F}\lb\bar{\calX}\rb\lsb\bar{\eta}\rsb,\bar{F}\lb\bar{\calX}\rb \right\rangle + \left\langle \mathrm{D} \bar{F}\lb\bar{\calX}\rb\lsb \bar{\eta} \rsb, \mathrm{D} \bar{F}\lb\bar{\calX}\rb\lsb\bar{\xi}_{\bar{\calX}}\rsb \right\rangle\notag\\
& = \left\langle   \mathrm{D} F\lb\phi\lb\bar{\calX}\rb\rb\lsb\mathrm{D}\phi\lb\bar{\calX}\rb\lsb\bar{\eta}\rsb\rsb,\bar{F}\lb\bar{\calX}\rb \right\rangle + \left\langle  \mathrm{D} F\lb\phi\lb\bar{\calX}\rb\rb\lsb\mathrm{D}\phi\lb\bar{\calX}\rb\lsb \bar{\eta}\rsb\rsb, \mathrm{D} F\lb\phi\lb\bar{\calX}\rb\rb\lsb\mathrm{D}\phi\lb\bar{\calX}\rb\lsb\bar{\xi}_{\bar{\calX}}\rsb\rsb \right\rangle,
\end{align}
for all $\bar{\eta}\in\calH_{\bar{\calX}}$.
By the bijective property of $\mathrm{D}\phi\lb\bar{\calX}\rb\vert_{\calH_{\bar{\calX}}}:\calH_{\bar{\calX}}\rightarrow T_{\phi\lb\bar{\calX}\rb}\calM_{\vr}$, \eqref{eq: one more step RGNQ} is further equivalent to
 \begin{align}\label{eq: quotient gauss newton}
0  = \left\langle   \mathrm{D} F\lb\phi\lb\bar{\calX}\rb\rb\lsb\eta\rsb,\bar{F}\lb\bar{\calX}\rb \right\rangle + \left\langle  \mathrm{D} F\lb\phi\lb\bar{\calX}\rb\rb\lsb\eta\rsb, \mathrm{D} F\lb\phi\lb\bar{\calX}\rb\rb\lsb\xi\rsb \right\rangle,~\text{for all}~\eta\in T_{\phi\lb\bar{\calX}\rb}\calM_{\vr},
\end{align}
where $\mathrm{D}\phi\lb\bar{\calX}\rb\lsb \bar{\eta}\rsb = \eta$ and $\mathrm{D}\phi\lb\bar{\calX}\rb\lsb\bar{\xi}_{\bar{\calX}}\rsb = \xi$. This is indeed the same as  the Gauss-Newton equation in~\eqref{eq: GNe}. Moreover, with the retractions defined in~\eqref{eq: retraction total space} and~\eqref{eq: equivalent retraction},  one can easily verify that
\begin{align*}
    R^E_{\calX}\lb\xi\rb = \phi\lb \left\lbrace \calX^1+\xi^1,\cdots,\calX^d+\xi^d\right\rbrace\rb = \phi\lb \bar{R}^Q_{\bar{\calX}}\lb\bar{\xi}_{\bar{\calX}}\rb\rb,
\end{align*}
where $\bar{\xi}_{\bar{\calX}} = \left\lbrace\xi^1,\cdots,\xi^d\right\rbrace$.
\end{proof}
\begin{remark}
Note that the Riemannian Gauss-Newton search direction only depends on the differential of $F$ and  is independent of the Riemannian metric. This is a key property that underlies the equivalence of the Riemannian Gauss-Newton methods under the two geometries.
\end{remark}
\subsubsection{Computational Details}\label{sec: implemention detail}
For the low rank tensor completion problem in the tensor train format, the function $F\lb\calX\rb$ is given by $F\lb\calX\rb = \calP_{\Omega}\lb\calX\rb- \calP_{\Omega}\lb\calT\rb$.
 Notice that the update direction $\xi$ is also the solution of the following least squares problem~\cite[Section 8.4]{absil2009optimization}:
\begin{align}\label{eq: tensor least square}
    \xi = \argmin_{\xi\in T_{\calX}\calM_{\vr}}~\fronorm{ \calP_{\Omega}\calP_{T_{\calX}\calM_{\vr}}\xi+\calP_{\Omega}\lb\calX\rb-\calP_{\Omega}\lb\calT\rb}^2.
\end{align}
Assume $\calX$ is represented in the TT format~\eqref{eq: tt decomposition} with left-orthogonal core tensors $\left\lbrace\calX^1,\cdots,\calX^d\right\rbrace$,
i.e., $L\lb \calX^k\rb^\tran L\lb \calX^k\rb = \mI_{r_k}$, for $k = 1,\cdots,d-1$. The tangent space of $\calM_{\vr}$ at $\calX$ is given by~\cite{holtz2012manifolds}
\begin{align}\label{eq: tangent space}
T_{\calX} \calM_{\vr}  = \left\lbrace\sum_{k = 1}^d\phi\lb\left\lbrace\calX^1,\cdots,\delta\calX^k,\cdots,\calX^d\right\rbrace\rb\bigg\vert~\delta\calX^k\in\R^{r_{k-1}\times n_k\times r_k},L\lb\delta\calX^k\rb^\tran L\lb\calX^k\rb = \bm{0},k = 1,\cdots,d-1\right\rbrace.
\end{align}
Given a tensor $\calZ\in\R^{n_1\times\cdots\times n_d}$,  the orthogonal projection of $\calZ$ onto $T_{\calX} \calM_{\vr} $ is~\cite{lubich2015time} 
\begin{align}\label{eq: opembedd}
\calP_{T_{\calX}\calM_{\vr}}\lb \calZ\rb  = \sum_{k = 1}^d \phi\lb\left\lbrace\calX^1,\cdots,\delta\calZ^k,\cdots,\calX^d\right\rbrace\rb,
\end{align}
where  the left unfolding of $\delta\calZ^k$  is given by
\begin{align*}
L\lb\delta\calZ^k\rb = \lb\mI_{n_kr_{k-1}}-L\lb\calX^k\rb L\lb\calX^k\rb^\tran\rb \lb  \mI_{n_k}\otimes {\calX^{\leq k-1}}^\tran\rb \calZ^{<k>}\calX^{\geq k+1}\lb\mR^{k+1}\rb^{-1}\quad\mbox{for $k = 1,\cdots,d-1$,}
\end{align*}
and $L\lb\delta\calZ^d\rb = \lb \mI_{n_d}\otimes {\calX^{\leq d-1}}^\tran\rb \calZ^{<d>} $. 

Solving the least squares problem~\eqref{eq: tensor least square} directly is computationally prohibitive since the size of $\xi$ is $\prod_{k = 1}^d n_k$ which grows exponentially in $d$. Fortunately, $\xi\in T_{\calX}\calM_{\vr}$  implies that the degree of freedom in it is $\sum_{k= 1}^d r_{k-1}n_kr_k-\sum_{k = 1}^{d-1}r_k^2$. Therefore, the problem~\eqref{eq: tensor least square} can be rewritten as a  least squares  problem with the number of parameters equal to $\sum_{k= 1}^d r_{k-1}n_kr_k-\sum_{k = 1}^{d-1}r_k^2$. 
To do so, we need another representation of the tangent space.
\begin{lemma}
    The tangent space ${T_{\calX}\calM_{\vr}}$ in~\eqref{eq: tangent space} has the following alternative form:
    \begin{align}\label{eq: new tangent space}
        T_{\calX}\calM_{\vr} &= \Bigg\{ \sum_{k = 1}^{d-1} \ten_{<k>}\lb\lb\mI_{n_k}\otimes\calX^{\leq k-1}\rb L\lb\calX^k\rb^{\perp}\mD^k{\mQ^{k+1}}^\tran\rb+ \phi\lb\left\lbrace\calX^1,\cdots, \mD^d\right\rbrace\rb \big| \notag\\
        &\qquad\qquad\qquad\qquad\qquad\qquad\mD^k\in\R^{r_k\times (n_kr_{k-1}-r_k)},k = 1,\cdots,d-1, \mD^d\in\R^{r_{d-1}\times n_d}
            \bigg\},
    \end{align}
    where $\ten_{<k>}\lb\cdot\rb$ is the $k$-th tensorization operator: $\R^{n_1\cdots n_k\times n_{k+1}\cdots  n_d}\rightarrow \R^{n_1\times n_2\cdots \times n_d}$,
    $L\lb\calX^k\rb^{\perp}\in\R^{n_kr_{k-1}\times n_kr_{k-1}-r_k}$ is the orthogonal complement matrix of $ L\lb \calX^k\rb$, and $\QR\lb{\calX^{\geq k+1}}\rb = \mQ^{k+1}\mS^{k+1}$ with ${\mQ^{k+1}}^\tran\mQ^{k +1} = \mI_{r_k}$. 
\end{lemma}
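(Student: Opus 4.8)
The plan is to set up an explicit, invertible change of parameters between the two descriptions of $T_{\calX}\calM_{\vr}$ and to check that, summand by summand, the expression in \eqref{eq: tangent space} is carried onto the expression in \eqref{eq: new tangent space}. Fix the left-orthogonal representation $\left\lbrace\calX^1,\ldots,\calX^d\right\rbrace$, so that $\mL^k=\mI_{r_k}$ for $k\le d-1$, and recall that, because $\rank_{\TT}(\calX)=\vr$, each $\calX^{\geq k+1}$ has full column rank $r_k$: indeed $\calX^{<k>}=\calX^{\leq k}{\calX^{\geq k+1}}^\tran$ has rank $r_k$ while $\calX^{\geq k+1}$ has only $r_k$ columns, so $r_k=\rank(\calX^{<k>})\le\rank(\calX^{\geq k+1})\le r_k$. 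Consequently the triangular factor $\mS^{k+1}$ in the thin QR decomposition $\calX^{\geq k+1}=\mQ^{k+1}\mS^{k+1}$ is invertible.

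First I would take a generic element $\xi=\sum_{k=1}^d\phi\lb\left\lbrace\calX^1,\ldots,\delta\calX^k,\ldots,\calX^d\right\rbrace\rb$ of the set in \eqref{eq: tangent space}. For $k\le d-1$ the gauge condition $L(\delta\calX^k)^\tran L(\calX^k)=\bm 0$ says exactly that the columns of $L(\delta\calX^k)$ lie in the orthogonal complement of $\Range(L(\calX^k))$; since $L(\calX^k)^{\perp}$ is a basis of that complement, there is a unique $\mC^k$ with $L(\delta\calX^k)=L(\calX^k)^{\perp}\mC^k$, and conversely every $\mC^k$ yields a gauge-admissible $\delta\calX^k$. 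For $k=d$ there is no gauge constraint, and $\delta\calX^d$ is identified with a matrix $\mD^d\in\R^{r_{d-1}\times n_d}$ in the obvious way, so that $\phi\lb\left\lbrace\calX^1,\ldots,\delta\calX^d\right\rbrace\rb=\phi\lb\left\lbrace\calX^1,\ldots,\mD^d\right\rbrace\rb$.

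Next I would rewrite the $k$-th summand ($k\le d-1$) using the $k$-th unfolding identity \eqref{eq: kth unfolding}: applying the tensorization operator $\ten_{<k>}$, the inverse of $(\cdot)^{<k>}$, and substituting $L(\delta\calX^k)=L(\calX^k)^{\perp}\mC^k$ gives
\[
\phi\lb\left\lbrace\calX^1,\ldots,\delta\calX^k,\ldots,\calX^d\right\rbrace\rb=\ten_{<k>}\!\left(\lb\mI_{n_k}\otimes\calX^{\leq k-1}\rb\,L(\calX^k)^{\perp}\,\mC^k\,{\calX^{\geq k+1}}^\tran\right).
\]
Substituting $\calX^{\geq k+1}=\mQ^{k+1}\mS^{k+1}$ and setting $\mD^k:=\mC^k{\mS^{k+1}}^\tran$ turns this into $\ten_{<k>}\lb\lb\mI_{n_k}\otimes\calX^{\leq k-1}\rb L(\calX^k)^{\perp}\mD^k{\mQ^{k+1}}^\tran\rb$, which is precisely the $k$-th term of \eqref{eq: new tangent space}. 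Since $\mS^{k+1}$ is invertible, the correspondence $\mC^k\leftrightarrow\mD^k$ is a linear bijection. Collecting the $d$ coordinates, the map $(\delta\calX^1,\ldots,\delta\calX^d)\mapsto(\mD^1,\ldots,\mD^d)$ is a bijection from the admissible cores onto the parameter set of \eqref{eq: new tangent space}, and under it the two sums coincide term by term; hence the two sets are equal.

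The conceptual content is light; the step I expect to require the most care is the bookkeeping around the unfolding and tensorization operators — namely verifying that $\ten_{<k>}$ genuinely inverts $(\cdot)^{<k>}$ on the matrices that arise here and that the Kronecker-factor orderings stay consistent with \eqref{eq: kth unfolding} — together with the (easy but necessary) point that $\rank(\calX^{\geq k+1})=r_k$, which is what makes $\mS^{k+1}$, and hence the reparametrization, invertible.
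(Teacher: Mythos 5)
Your proof is correct and follows essentially the same route as the paper's: write $L(\delta\calX^k)=L(\calX^k)^{\perp}\mC^k$ using the gauge condition, unfold the $k$-th summand, substitute the thin QR factorization $\calX^{\geq k+1}=\mQ^{k+1}\mS^{k+1}$, and observe that $\mD^k=\mC^k(\mS^{k+1})^\tran$ is a linear bijection. The only substantive addition over the paper's argument is that you explicitly justify the invertibility of $\mS^{k+1}$ by showing $\calX^{\geq k+1}$ has full column rank, a point the paper uses (via $(\mS^{k+1})^{-\tran}$ in the converse direction) but does not spell out.
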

\begin{proof}
To establish the equivalence of the tangent spaces in~\eqref{eq: tangent space} and~\eqref{eq: new tangent space}, we need to show that there exists a  one-to-one correspondence between the elements in two spaces. Given $\delta\calX^k$, the $k$-th unfolding of the $k$-th element in~\eqref{eq: tangent space} is
\begin{align*}
\lb\phi\lb\left\lbrace\calX^1,\cdots,\delta\calX^k,\cdots,\calX^d\right\rbrace\rb\rb^{<k>} = \lb\mI_{n_k}\otimes\calX^{\leq k-1}\rb L\lb \delta\calX^k\rb {\calX^{\geq k+1}}^\tran.
\end{align*}
Since $L\lb\delta\calX^k\rb^\tran L\lb\calX^k\rb = 0$, we have $L\lb\delta\calX^k\rb = L\lb\calX^k\rb^{\perp}\mA^k$ for some $\mA^k\in\R^{ n_kr_{k-1}-r_k\times r_k}$, where $L\lb\calX^k\rb^{\perp}\in\R^{n_kr_{k-1}\times n_kr_{k-1}-r_k}$ is the orthogonal complement matrix of $ L\lb \calX^k\rb$. Let  ${\calX^{\geq k+1}} = \mQ^{k+1}\mS^{k+1}$ be the QR decomposition of ${\calX^{\geq k+1}}$ with ${\mQ^{k+1}}^\tran\mQ^{k +1} = \mI_{r_k}$ and $\mS^{k+1}\in\R^{r_k\times r_k}$. It follows that
\begin{align*}
\lb\phi\lb\left\lbrace\calX^1,\cdots,\delta\calX^k,\cdots,\calX^d\right\rbrace\rb\rb^{<k>}   = \lb\mI_{n_k}\otimes\calX^{\leq k-1}\rb L\lb\calX^k\rb^{\perp}\mA^k{\mS^{k+1}}^\tran{\mQ^{k+1}}^\tran.
\end{align*}
Thus, the $k$-th element in~\eqref{eq: new tangent space} with  $\mD^k = \mA^k{\mS^{k+1}}^\tran$ corresponds to the $k$-th element in~\eqref{eq: tangent space}. 

Given $\mD^k$, the $k$-th element in~\eqref{eq: new tangent space} is
\begin{align*}
    \ten_{<k>}\lb\lb\mI_{n_k}\otimes\calX^{\leq k-1}\rb L\lb\calX^k\rb^{\perp}\mD^k{\mQ^{k+1}}^\tran\rb & = \ten_{<k>}\lb\lb\mI_{n_k}\otimes\calX^{\leq k-1}\rb L\lb\calX^k\rb^{\perp}\mD^k{\mS^{k+1}}^{-\tran}{\mS^{k+1}}^\tran{\mQ^{k+1}}^\tran\rb\\
    & = \phi\lb \left\lbrace \calX^1,\cdots,L^{-1}\lb L\lb\calX^k\rb^{\perp}\mD^k{\mS^{k+1}}^{-\tran}\rb,\cdots,\calX^d\right\rbrace\rb.
\end{align*}
Hence, the $k$-th element in~\eqref{eq: tangent space} with $\delta\calX^k = L^{-1}\lb L\lb\calX^k\rb^{\perp}\mD^k{\mS^{k+1}}^{-\tran}\rb$ corresponds to the $k$-th element in~\eqref{eq: new tangent space}.
\end{proof}

Notice that for $k =1,\cdots,d-1$,  the $k$-th unfolding of the $k$-th element in~\eqref{eq: opembedd} can be expressed as 
\begin{align*}
&\lb\phi\lb\left\lbrace\calX^1,\cdots,\delta\calZ^k,\cdots,\calX^d\right\rbrace\rb\rb^{<k>} \\
&\quad = \lb  \mI_{n_k}\otimes {\calX^{\leq k-1}}\rb\lb\mI_{n_kr_{k-1}}-L\lb\calX^k\rb L\lb\calX^k\rb^\tran\rb \lb  \mI_{n_k}\otimes {\calX^{\leq k-1}}^\tran\rb \calZ^{<k>}\calX^{\geq k+1}\lb\mR^{k+1}\rb^{-1}{\calX^{\geq k+1}}^\tran\\
&\quad =  \lb  \mI_{n_k}\otimes {\calX^{\leq k-1}}\rb L\lb\calX^k\rb^{\perp}{L\lb\calX^k\rb^{\perp}}^\tran\lb  \mI_{n_k}\otimes {\calX^{\leq k-1}}^\tran\rb \calZ^{<k>}\mQ^{k+1}{\mQ^{k+1}}^\tran.
\end{align*}
It is not hard to see that $\calP_{T_{\calX}\calM_{\vr}}\lb\calZ\rb = \calA\calA^\ast\lb\calZ\rb$ where 
the operators $\calA:  \R^{r_1\times (n_1-r_1)}\times \R^{r_2\times (n_2r_1-r_2)}\times\cdots\times\R^{r_{d-1}\times n_d}\rightarrow T_{\calX}\calM_{\vr}$ and $\calA^\ast:  \R^{n_1\times\cdots\times n_d} \rightarrow \R^{r_1\times (n_1-r_1)}\times \R^{r_2\times (n_2r_1-r_2)}\times\cdots\times\R^{r_{d-1}\times n_d}$ are defined by
\begin{align*}
    \calA\lb\left\lbrace \mD^k\right\rbrace_{k = 1}^d\rb &= \sum_{k = 1}^{d-1} \ten_{<k>}\lb\lb\mI_{n_k}\otimes\calX^{\leq k-1}\rb L\lb\calX^k\rb^{\perp}\mD^k{\mQ^{k+1}}^\tran\rb+ \phi\lb\left\lbrace\calX^1,\cdots, \mD^d\right\rbrace\rb,\\
    \calA^\ast\lb \calZ\rb &= \left\lbrace \left\lbrace{L\lb\calX^k\rb^{\perp}}^\tran\lb  \mI_{n_k}\otimes {\calX^{\leq k-1}}^\tran\rb \calZ^{<k>}\mQ^{k+1}\right\rbrace_{k = 1}^{d-1}, L^{-1}\lb \lb \mI_{n_d}\otimes {\calX^{\leq d-1}}^\tran\rb\calZ^{<d>}\rb \right\rbrace.
\end{align*}
It  can be easily verify that $\calA^\ast$ is the adjoint  of $\calA$. For $\lb i_1, i_2,\cdots,i_d\rb\in\Omega$, the corresponding measurement tensor is 
\begin{align*}
    \calB_i = \ve_{i_1}\circ\ve_{i_2}\circ\cdots\circ\ve_{i_d},
\end{align*}
where $\ve_{i_k}$ is the $i_k$-th canonical basis vector and the element of $\calB_i\in\R^{n_1\times\cdots\times n_d}$ is defined by
\begin{align*}
\calB_i\lb j_1,j_2,\cdots,j_d\rb = \ve_{i_1}\lb j_1\rb\cdot\ve_{i_2}\lb j_2\rb\cdots \ve_{i_d}\lb j_d\rb.
\end{align*}
Then  the objective function in~\eqref{eq: tensor least square} can be rewritten as 
\begin{small}
\begin{align}\label{eq: unconstrain tensor least square}
    &\fronorm{ \calP_{\Omega}\calP_{T_{\calX}\calM_{\vr}}\xi+\calP_{\Omega}\lb\calX\rb-\calP_{\Omega}\lb\calT\rb}^2\notag\\
    &  = \sum_{i = 1}^{\vert\Omega\vert}\lb \left\langle \calB_i,\calA\calA^\ast\xi\right\rangle+\left\langle \calB_i,\calX-\calT\right\rangle\rb^2 = \sum_{i = 1}^{\vert\Omega\vert}\lb \left\langle \calA^\ast\calB_i,\calA^\ast\xi\right\rangle+\left\langle \calB_i,\calX-\calT\right\rangle\rb^2\notag\\
    & = \sum_{i = 1}^{\vert\Omega\vert}\lb \sum_{k = 1}^{d-1}\left\langle {L\lb\calX^k\rb^{\perp}}^\tran\lb  \mI_{n_k}\otimes {\calX^{\leq k-1}}^\tran\rb \calB_i^{<k>}\mQ^{k+1},\mD^k \right\rangle +\left\langle  \lb \mI_{n_d}\otimes {\calX^{\leq d-1}}^\tran\rb\calB_i^{<d>},L\lb\mD^d\rb\right\rangle+\left\langle\calB_i,\calX-\calT\right\rangle\rb^2,
\end{align}
\end{small}where $\left\lbrace \mD^k\right\rbrace_{k =1}^d =\calA^\ast\lb \xi\rb$. Clearly, this is an unconstrained least squares problem with variables $\left\lbrace \mD^k\right\rbrace_{k = 1}^d$ whose dimension is $\sum_{k= 1}^d r_{k-1}n_kr_k-\sum_{k = 1}^{d-1}r_k^2$.

After solving  this problem,  the solution of ~\eqref{eq: tensor least square} can be obtained via $\xi = \calA\lb\left\lbrace \mD^k\right\rbrace_{k = 1}^d \rb$, since $\xi\in T_{\calX}\calM_{\vr}$. More precisely, we have 
\begin{align*}
    \xi 
    & = \sum_{k = 1}^{d-1} \ten_{<k>}\lb\lb\mI_{n_k}\otimes\calX^{\leq k-1}\rb L\lb\calX^k\rb^{\perp}\mD^k{\mQ^{k+1}}^\tran\rb+ \phi\lb\left\lbrace\calX^1,\cdots, \mD^d\right\rbrace\rb\\
    & = \sum_{k = 1}^{d-1} \ten_{<k>}\lb\lb\mI_{n_k}\otimes\calX^{\leq k-1}\rb L\lb\calX^k\rb^{\perp}\mD^k{\mS^{k+1}}^{-\tran}{\mS^{k+1}}^{\tran}{\mQ^{k+1}}^\tran\rb+ \phi\lb\left\lbrace\calX^1,\cdots, \mD^d\right\rbrace\rb\\
    & = \sum_{k = 1}^{d-1} \phi\lb \left\lbrace\calX^1,\cdots, L^{-1}\lb L\lb\calX^k\rb^{\perp}\mD^k{\mS^{k+1}}^{-\tran}\rb,\cdots,\calX^d\right\rbrace\rb+ \phi\lb\left\lbrace\calX^1,\cdots, \mD^d\right\rbrace\rb\\
    & = \mathrm{D}\phi\lb \bar{\calX}\rb \lsb \left\lbrace \left\lbrace L^{-1}\lb L\lb\calX^k\rb^{\perp}\mD^k{\mS^{k+1}}^{-\tran}\rb \right\rbrace_{k = 1}^{d-1},\mD^d\right\rbrace\rsb.
\end{align*}
Given $\xi = \calA\lb \left\lbrace \mD^k\right\rbrace_{k = 1}^d\rb$, by Theorem~\ref{thm: equivalent}, the Gauss-Newton update $\bar{\xi}_{\bar{\calX}}\in\calH_{\bar{\calX}}$ in~\eqref{eq: GNQ equation}  under the quotient geometry can be obtained by orthogonal projection~\eqref{eq: horizontal projection}:
\begin{align}\label{eq: obtain rgn update quo}
    \bar{\xi}_{\bar{\calX}} = \calP_{\bar{\calX}}^{\calH}\lb \left\lbrace \left\lbrace L^{-1}\lb L\lb\calX^k\rb^{\perp}\mD^k{\mS^{k+1}}^{-\tran}\rb \right\rbrace_{k = 1}^{d-1},\mD^d\right\rbrace\rb.
\end{align}

If we use the retraction defined in~\eqref{eq: retraction total space}, the main computational complexity of RGN (Q) lies in constructing and  solving the problem~\eqref{eq: unconstrain tensor least square}. It requires $O\lb dn^2r^2\rb$ flops to calculate the matrices $L\lb\calX^k\rb^{\perp}$ via Householder transformation.  To avoid computing $\mQ^{k+1}$, we can rewrite $\mQ^{k+1}$ as $\calX^{\geq k+1}{\mS^{k+1}}^{-\tran}$. By~\eqref{eq: recursive relation}, the computation of $\left\lbrace\mS^k\right\rbrace_{k = 2}^d$ can be implemented recursively which costs $O\lb dnr^3 \rb$ flops, see Algorithm~\ref{alg: calculate S}.  The sparsity of $\calB_i$ implies that calculating $L\lb\calX^k\rb^{\perp}\lb  \mI_{n_k}\otimes {\calX^{\leq k-1}}^\tran\rb \calB_i^{<k>}\calX^{\geq k+1}$ costs $O\lb dr^2+n^2r^3\rb$ flops. Thus, the total cost
 needed for constructing~\eqref{eq: unconstrain tensor least square} is $O\lb d\vert\Omega\vert n^2r^3+d^2\vert\Omega\vert r^2\rb$ flops. Since solving ~\eqref{eq: unconstrain tensor least square} costs  $O\lb d^2\vert\Omega\vert n^2r^4\rb$ flops, the main per iteration cost of RGN (Q) is $O\lb d^2\vert\Omega\vert n^2r^4\rb$ flops. For RGN (E), if the retraction in~\eqref{eq: retraction TTSVD}  is used, the TT-rounding procedure requires $O(dnr^3)$ flops~\cite{oseledets2011tensor} to compute the TT-SVD. Thus, 
  the leading order  per iteration cost of RGN (E) is still $O\lb d^2\vert\Omega\vert n^2r^4\rb$ flops. 
 {\LinesNumberedHidden
\begin{algorithm}[ht!]
	\label{alg: calculate S}
	\caption{Computation of $\left\lbrace \mS^k\right\rbrace_{k = 2}^d$}
	\KwIn{ $\bar{\calX} = \left\lbrace\calX^1,\cdots,\calX^d\right\rbrace$ }
    Compute QR decomposition: $\lsb\mQ^{d},\mS^{d}\rsb = \QR\lb R\lb\calX^d\rb^\tran\rb$; \\
	\For{$k = d-1,\cdots,2$}{  
    Compute QR decomposition: $\lsb \mV,\mS^{k}\rsb = \QR\lb \lb\mS^{k+1}\otimes\mI_{n_{k}}\rb R\lb\calX^{k}\rb^\tran\rb$;
	}
 \KwOut{$\left\lbrace \mS^k\right\rbrace_{k = 2}^d$.}
\end{algorithm}}

\section{Numerical Experiments}
In this section, we evaluate the empirical performance of the proposed algorithms against  existing algorithms for the tensor completion problem in the TT format.  Other tested algorithms, including Riemannian gradient descent (RGD (E))~\cite{cai2022provable,wang2019tensor},  Riemannian conjugate gradient (RCG (E))~\cite{steinlechner2016riemannian}, Riemannian trust region with finite-difference Hessian approximation (FD-TR)~\cite{psenka2020second}, are all based on the embedded geometry and implemented in the toolbox Manopt~\cite{boumal2014manopt}. For a fair comparison,  the step size selection criterion of RGD (E) has been modified to backtracking line search with RBB initial step size. We first compare the recovery ability of the tested algorithms on random low-rank tensors in Section~\ref{sec: numer recovery ability}. Then the convergence performance of these algorithms are tested in Section~\ref{sec: iteration count}. Finally, we evaluate the reconstruction quality of the tested algorithms on function-related tensors in Section~\ref{sec: interpolation}. 

\subsection{Recovery Ability vs. Oversampling Ratio and Condition Number}\label{sec: numer recovery ability}
We  investigate the recovery ability of the tested algorithms under different oversampling ratios and condition numbers. The oversampling (OS) ratio is defined as the ratio of the number of samples to the dimension,
\begin{align*}
\OS = \frac{\vert\Omega\vert}{\dim\lb\calM_{\vr}\rb},
\end{align*}
where $\vert\Omega\vert$ is the number of sampled entries, and $\dim\lb\calM_{\vr}\rb = \sum_{k= 1}^d r_{k-1}n_kr_k-\sum_{k = 1}^{d-1}r_k^2$ is the degrees of freedom of an $n_1\times\cdots\times n_d$ tensor with TT rank $(1,r_1,\cdots,r_{d-1},1)$. The condition number for a tensor $\calX$ is a natural generalization of the condition number of a matrix which is defined as~\cite{cai2022provable}
\begin{align*}
\kappa\lb\calX\rb = \frac{\sigma_{\max}\lb \calX\rb}{\sigma_{\min}\lb \calX   \rb },
\end{align*}
where $\sigma_{\max}\lb\calX\rb$ and $\sigma_{\min}\lb\calX\rb$ are defined by
\begin{align*}
    \sigma_{\max}\lb\calX\rb &:= \max\left\lbrace \sigma_{\max}\lb\calX^{<1>}\rb,\sigma_{\max}\lb\calX^{<2>}\rb,\cdots,\sigma_{\max}\lb\calX^{<d-1>}\rb\right\rbrace,\\
    \sigma_{\min}\lb\calX\rb &:= \min\left\lbrace \sigma_{\min}\lb\calX^{<1>}\rb,\sigma_{\min}\lb\calX^{<2>}\rb,\cdots,\sigma_{\min}\lb\calX^{<d-1>}\rb\right\rbrace.
\end{align*}

We fix $d = 3$, $n_1 = n_2 = n_3 = n = 100$, $r_1 = r_2 = r = 5$. Tests are conducted for two different oversampling ratios: $\OS = \left\lbrace 4,8\right\rbrace$, and for four different condition numbers: randomly generated tensors with condition number about $1$ and $\kappa = \left\lbrace 25 ,50,100\right\rbrace$.  Only $\OS\times\dim\lb\calM_{\vr}\rb$ entries of the ground truth tensor $\calT$ are observed. The test tensor $\calT$ with condition number about $1$ is constructed by the TT format with each core tensor being a random Gaussian tensor of appropriate size. The test tensor $\calT$ with a fixed condition number is generated in the following way. The core tensor $\calT^1$ is a random orthonormal matrix of size $ n \times r$. The core tensor $\calT^2$ is constructed by the Tucker decomposition: $\calT^2 = \calS\times_1\mU\times_2\mV\times_3\mW$, where $\calS\in\R^{r\times r\times r}$ is a diagonal tensor with ones along the superdiagonal and $\mU\in\R^{r\times r}$,  $\mV\in\R^{n\times r}$, $\mW\in\R^{r\times r}$ are  random orthonormal matrices. The core tensor $\calT^3$  is given  by $\calT^3 = \mX\bm{\Sigma}{\mY}^\tran$,  where $\mX,\mY$ are two  orthonormal matrices  of size $r\times r$ and $n\times r$ respectively,  and the diagonal entries of the singular matrix $\bm{\Sigma}$ are  linearly distributed from $1$ to $1/\kappa$. It  can be easily verify that $\kappa\lb\calT\rb = \kappa\lb\phi\lb\left\lbrace \calT^1,\calT^2,\calT^3\right\rbrace\rb\rb = \kappa$.

We run each algorithm 100 times for every combination of oversampling ratio and condition number. Tested algorithms are terminated if the relative error $\frac{\fronorm{\calP_{\Omega}\lb\calX_t\rb-\calP_{\Omega}\lb\calT\rb}}{\fronorm{\calP_{\Omega}\lb\calT\rb}}$ falls below $10^{-4}$ or $250$ number of iterations are reached.  An algorithm is considered  to have successfully reconstructed a test tensor if the output tensor $\calX_t$ satisfies $\frac{\fronorm{\calX_t-\calT}}{\fronorm{\calT}}\leq 10^{-3}$. The rate of successful recovery for different algorithms against different oversampling ratios and condition numbers are listed in Table~\ref{tab:succ recover rate}. 
It can be observed  from the table that the Riemannian gradient descent algorithm under the quotient geometry achieves the best reconstruction guarantee among all the tested algorithms. For a low oversampling ratio, the recovery ability of RGD (E), RCG (E), RGN (E)\footnote{Note that RGN(E) refers to Algorithm~\ref{alg: RGN(E)} with the TT-SVD retraction.}, RGN (Q)  degrades severely when the condition number increases, while RGD (Q), RCG (Q), and FD-TR can still achieve good performance. In the high oversampling ratio case, RGD (Q) and  RCG (Q) can successfully reconstruct the underlying tensor with a probability close to $1$ even when the condition number is large.
\begin{table}[ht!]
\renewcommand\arraystretch{1.5}
\centering
\caption{Successful recovery rate table for RGD (E), RGD (Q), RCG (E), RCG (Q), RGN (E), RGN (Q), FD-TR over $100$ random problem instances for $\OS = \left\lbrace 4,8\right\rbrace$ and random  tensor with $\kappa\approx 1$ and  $\kappa = \left\lbrace 25, 50,100\right\rbrace$}
\label{tab:succ recover rate}
\vspace{0.2cm}
\begin{tabular}{|c|c|c|c|c|c|}
\hline 
 &        & random $(\kappa\approx 1)$ & $\kappa= 25$ & $\kappa= 50$ & $\kappa= 100$\\
 \hline
 \multirow{7}{*}{OS = 4}                     & RGD (E) & 0.87   & 0.39                       & 0.11                      & 0.02                           \\
& RGD (Q) & $\bm{1}$   & $\bm{0.78}$                      & $\bm{0.59}$                       & $\bm{0.42}$                        \\
& RCG (E) & 0.99   & 0.47                       & 0.14                       & 0.01                        \\
 & RCG (Q) & 0.98   & 0.71                       & 0.51                       & 0.32                        \\
 & RGN (E) & 0.99   & 0.30                       & 0.06                       & 0                           \\
 & RGN (Q) & 0.99   & 0.30                       & 0.08                       & 0.01                        \\
 & FD-TR  & 0.96   & 0.74                       & 0.59                       & 0.41                        \\
 \hline
 \multicolumn{1}{|c|}{\multirow{7}{*}{OS = 8}} & RGD (E) & 0.99      & 0.98                       & 0.96                       & 0.83                        \\
 \multicolumn{1}{|c|}{}                        & RGD (Q) & $\bm{1}$      & $\bm{1}$                          & $\bm{1}$                          & $\bm{0.99}$                        \\
 \multicolumn{1}{|c|}{}                        & RCG (E) & 1      & 1                          & 0.98                      & 0.90                      \\
 \multicolumn{1}{|c|}{}                        & RCG (Q) & 1      & 1                          & 0.98                       & 0.98                        \\
 \multicolumn{1}{|c|}{}                        & RGN (E) & 1      & 0.98                       & 0.84                       & 0.12                        \\
 \multicolumn{1}{|c|}{}                        & RGN (Q) & 1      & 0.98                       & 0.88                       & 0.54                        \\
 \multicolumn{1}{|c|}{}                        & FD-TR  & 1      & 1                          & 0.98                       & 0.90  \\
 \hline
\end{tabular}
\end{table}

\subsection{Iteration Count and Runtime}\label{sec: iteration count}
In this section,  we first  investigate the iteration  count and  runtime of all the tested algorithms under different oversampling ratios.  The
algorithms are tested with $d = 9$, $n_1 = n_2 = \cdots = n_9 =  5$, $\vr = (1,3,5,10,10,10,10,5,3,1)$, $\OS = \left\lbrace 15,20\right\rbrace$, and they are terminated whenever the relative error  falls below $10^{-10}$. 
Tests are first conducted on random Gaussian tensors generated by the same procedure as in Section~\ref{sec: numer recovery ability}. We plot the relative residual against the iteration count and runtime in Figure~\ref{fig:my_labely}. It can be seen that RGN (E), RGN (Q), and FD-TR achieve superlinear convergences, while the other tested algorithms converge at a linear rate. For a
 low oversampling ratio, first-order methods under the quotient geometry converge faster than their counterparts based on the embedded geometry. 
\begin{figure}[ht!]
    \centering
    \includegraphics[width=0.4\textwidth]{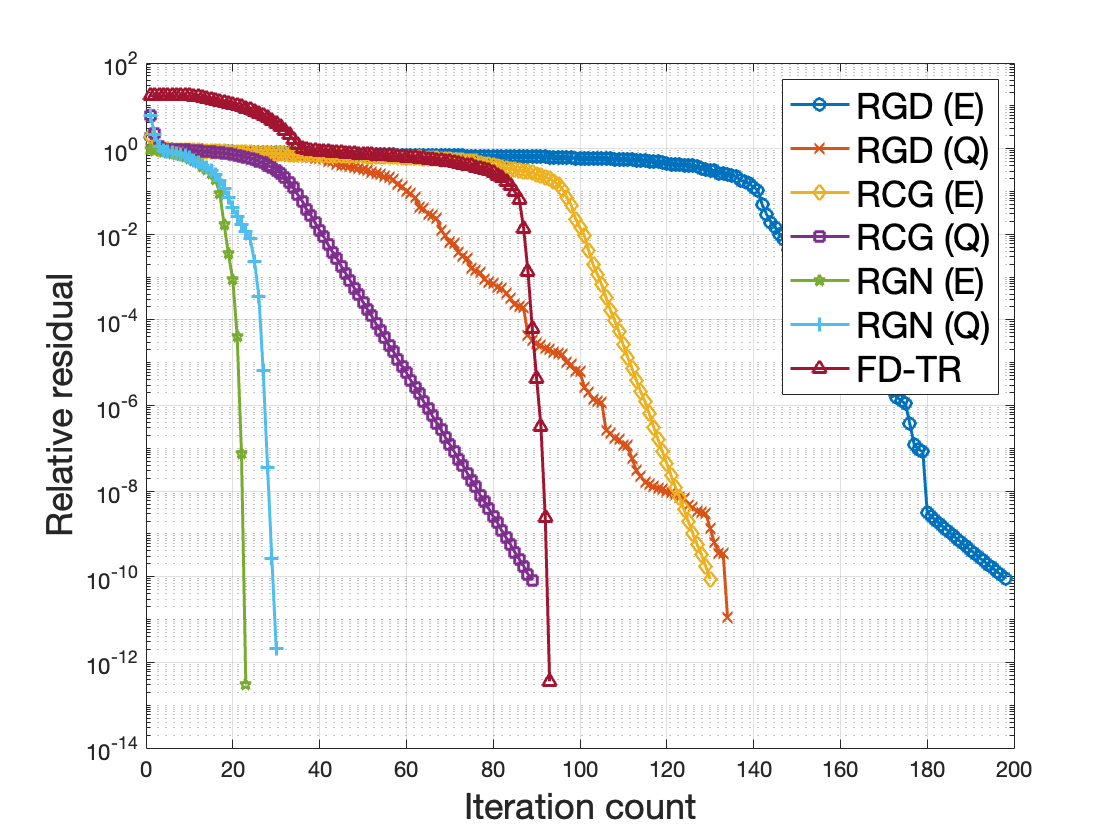}
    \includegraphics[width=0.4\textwidth]{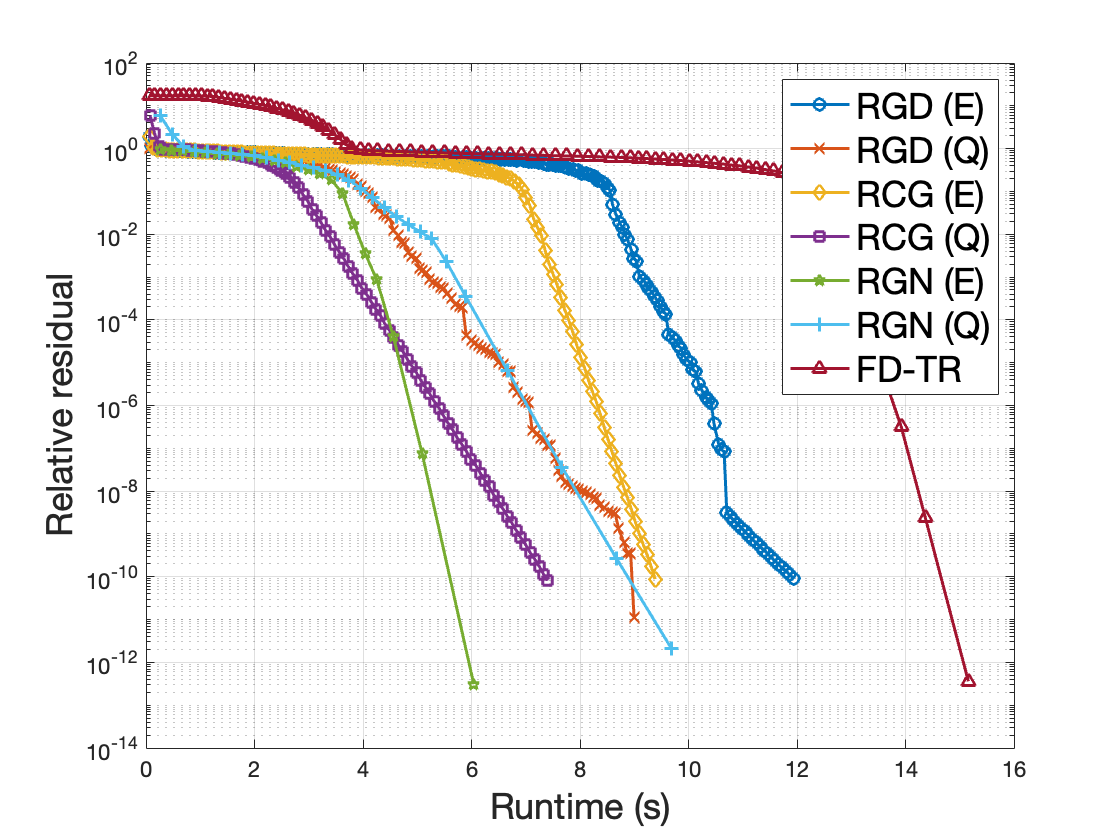}
    \includegraphics[width=0.4\textwidth]{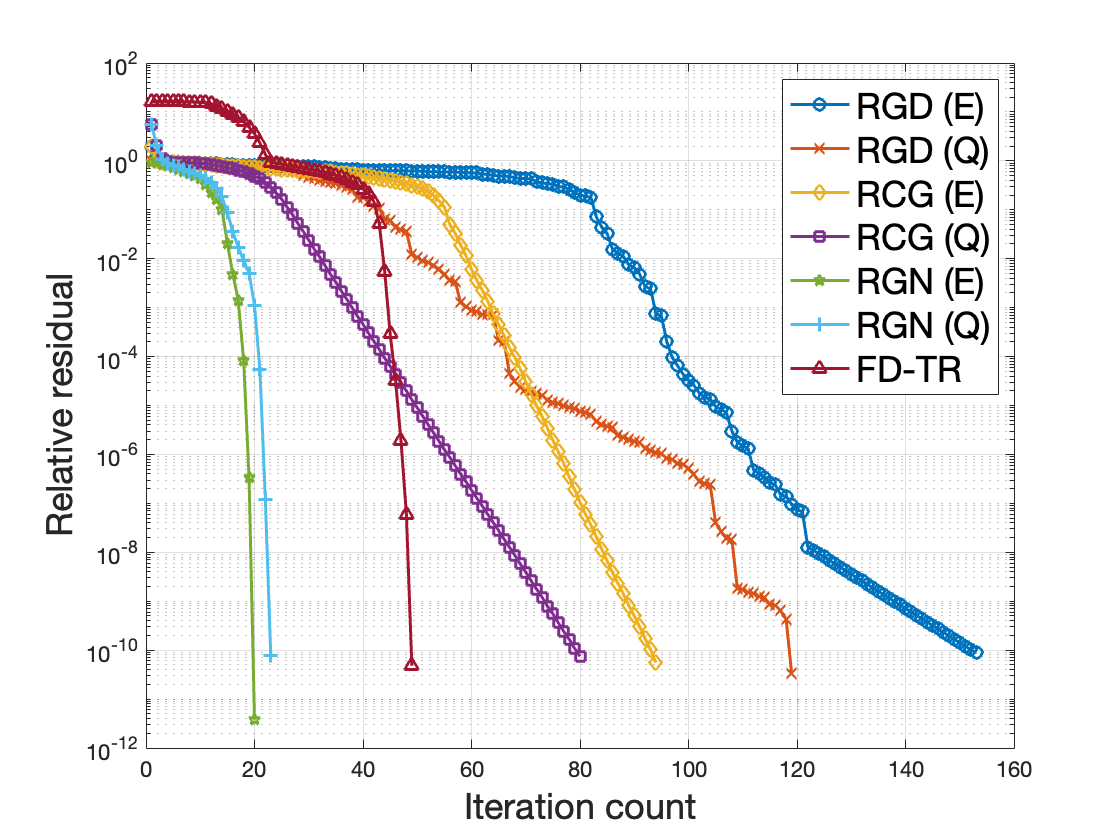}
    \includegraphics[width=0.4\textwidth]{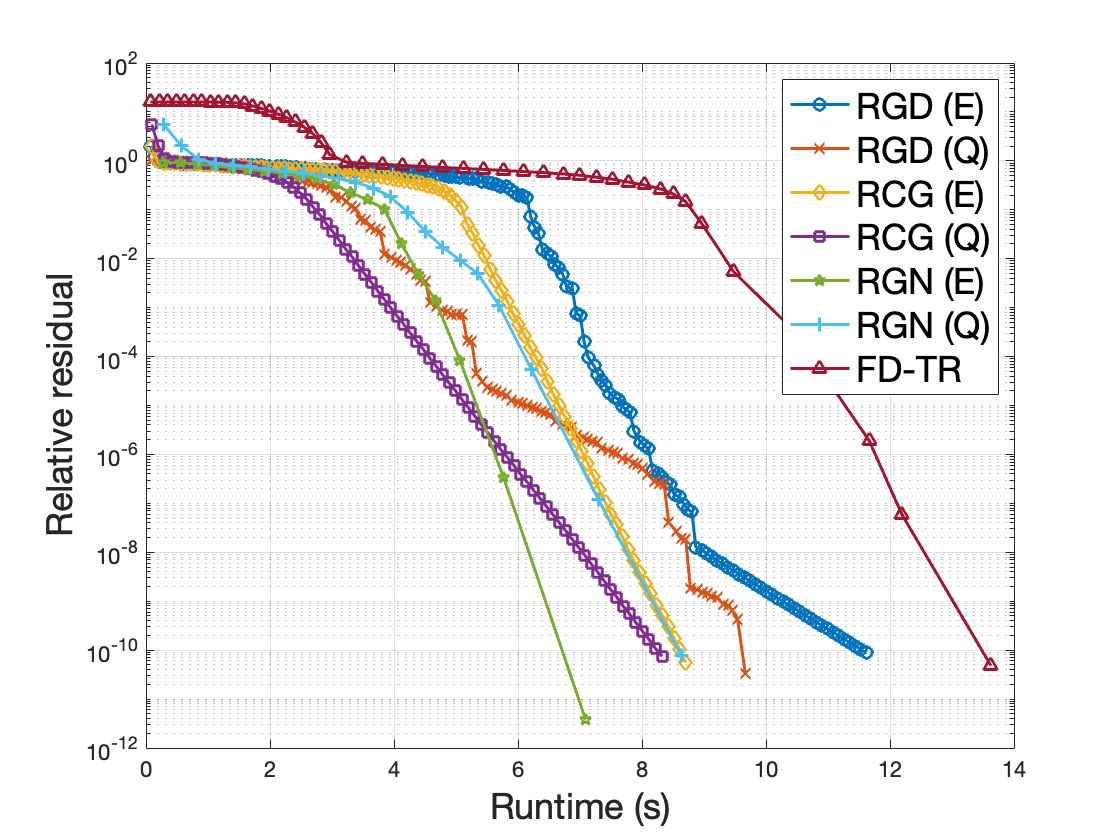}
    \caption{The relative residuals of tested algorithms with respect to the iteration count and
runtime (in seconds). Top panels: $\OS = 15$; bottom panels: $\OS = 20$. }
    \label{fig:my_labely}
\end{figure}

While  random tensors have benign condition numbers, we also compare the performance of the tested algorithms under a higher condition number $\kappa = 600$. We set 
 $d = 9$, $n_1 = n_2 = \cdots = n_9 =  5$, $\vr = (1,3,5,10,10,10,10,5,3,1)$, $\OS = \left\lbrace 15,20\right\rbrace$. The random tensors with fixed condition number are generated in the same way as in Section~\ref{sec: numer recovery ability}. Tested algorithms are terminated whenever the relative residual $\frac{\fronorm{\calP_{\Omega}\lb \calX_t\rb-\calP_{\Omega}\lb \calT\rb}}{\fronorm{\calP_{\Omega}\lb\calT\rb}}$ is less than $10^{-10}$ or $250$ number of iterations are reached.  In Figure~\ref{fig: cond}, we show  the relative residual of each algorithm  against the number of iteration and runtime. In this setting,  the algorithms  proposed in this paper are computationally more efficient than other state-of-the-art algorithms. Moreover,
 it can be observed that 
 RGD (Q) and RCG (Q) have a rapid initial residual decrease even when the condition number is large.  Thus, the total number of iterations for them to converge rely weakly on the condition number. In contrast,  the convergence of RGD (E) and RCG (E) relies heavily on the condition number of test tensors.  

 \begin{figure}[ht!]
    \centering
    \includegraphics[width=0.4\textwidth]{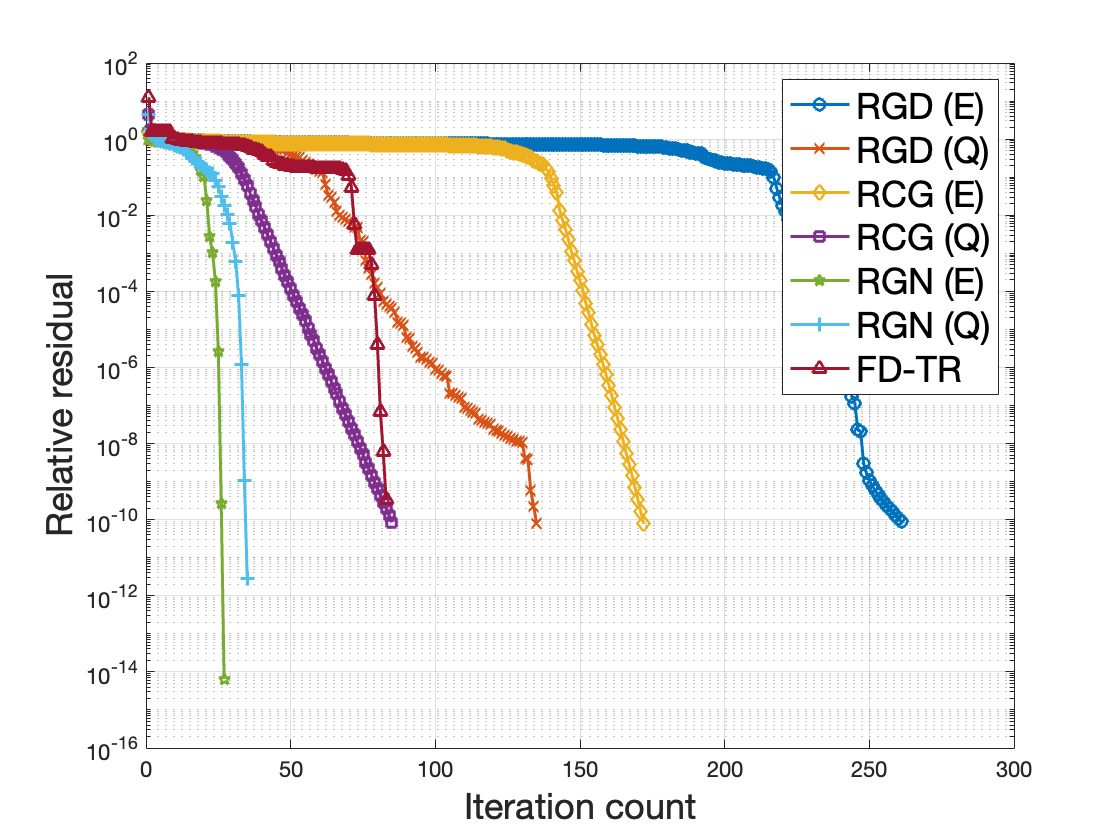}
    \includegraphics[width=0.4\textwidth]{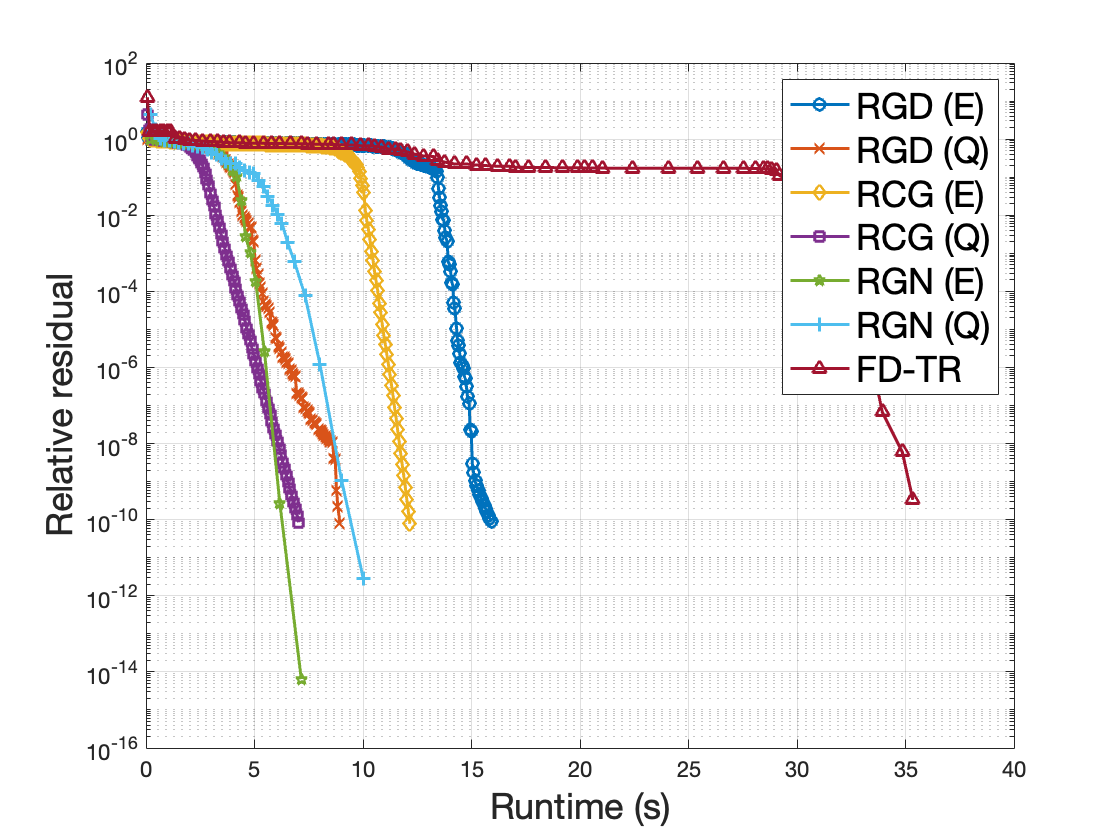}
    \includegraphics[width=0.4\textwidth]{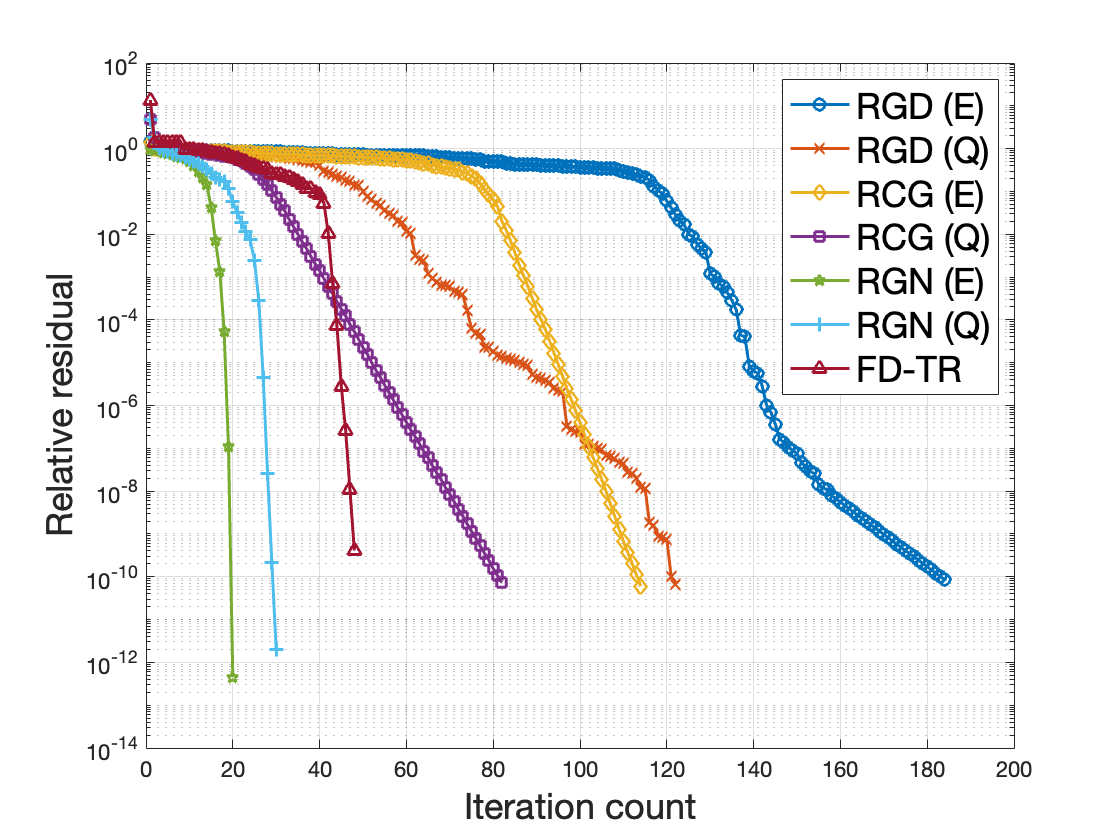}
    \includegraphics[width=0.4\textwidth]{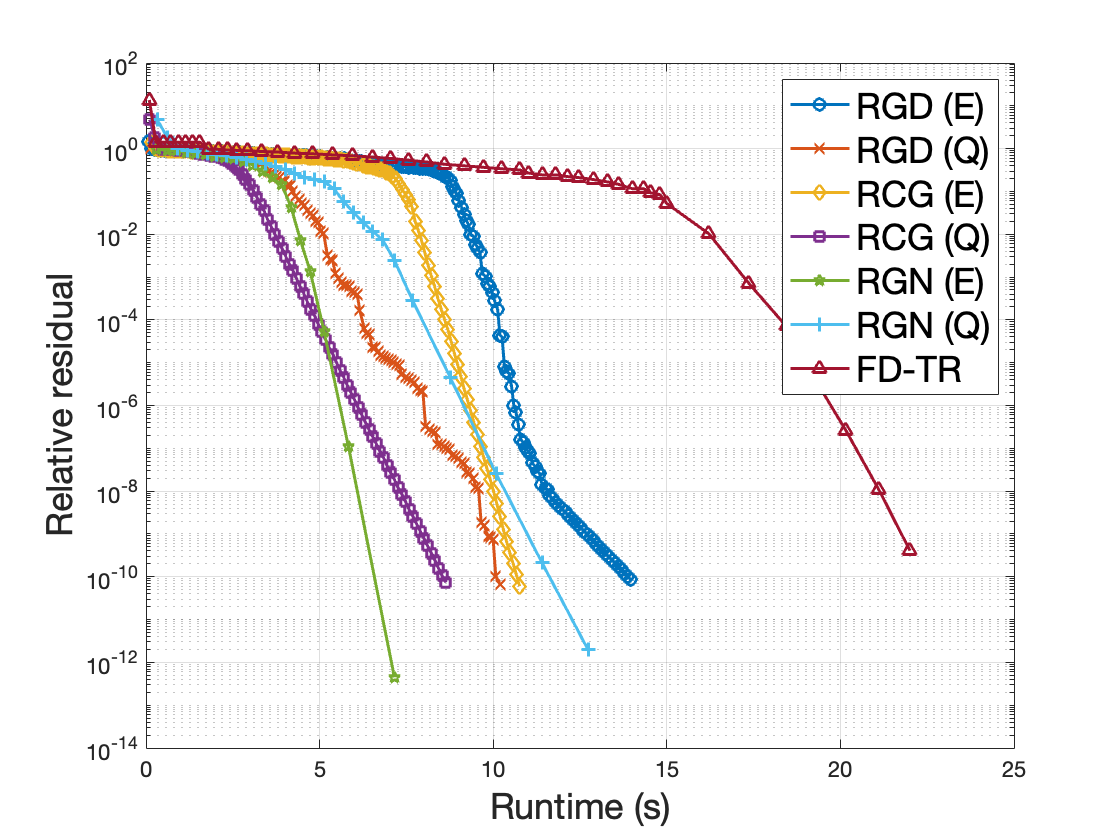}
    \caption{The relative residuals of tested algorithms with respect to the iteration count and
runtime (in seconds) under condition number $\kappa = 600$. Top panels: $\OS = 15$; bottom panels: $\OS = 20$. }
        \label{fig: cond}
\end{figure}

\subsection{Interpolation of High Dimensional Functions}\label{sec: interpolation}
Lastly, we  compare the performance of the tested algorithms on two discretization tensors of function data which were tested in~\cite{steinlechner2016riemannian}. The ground truth tensor $\calT_1\in\R^{n_1\times \cdots\times n_d}$ is constructed by 
\begin{align*}
    \calT_1\lb i_1,\cdots,i_d\rb = \exp\lb-\sqrt{\sum_{k = 1}^d \lb \frac{i_k}{n_k-1}\rb^2}\rb,
\end{align*}
while the other tensor $\calT_2\in\R^{n_1\times \cdots\times n_d}$ is generated by 
\begin{align*}
    \calT_2\lb i_1,\cdots,i_d\rb = \frac{1}{\sqrt{\sum_{k = 1}^d i_k^2}}.
\end{align*}
We set $d = 4$, $n_1 = n_2 = n_3 = n_4 = 20$. The above function-related tensors possess a property that the singular values of their unfolding formats delay sufficiently fast but do not become exactly zero. It implies that the underlying tensors are not  precisely low rank. To improve the reconstruction quality for these data, we  adopt
the rank-increasing strategy proposed in~\cite{steinlechner2016riemannian} for the tested algorithms.  For more details about the rank-increasing procedure, we refer the reader to~\cite[Section 4.9]{steinlechner2016riemannian}. The maximum rank is set to $\vr = \lb 1,5,5,5,1\rb$.  Tested algorithms are terminated at each rank-increasing step whenever the relative residual $\frac{\fronorm{\calP_{\Omega}\lb \calX_t\rb-\calP_{\Omega}\lb \calT\rb}}{\fronorm{\calP_{\Omega}\lb\calT\rb}}$ is less than $ 10^{-5}$, or $15$ ($20$ at final step) number of iterations are reached, or the relative change $\frac{\vert \fronorm{\calP_{\Omega}\lb \calX_t\rb-\calP_{\Omega}\lb \calT\rb}-\fronorm{\calP_{\Omega}\lb \calX_{t-1}\rb-\calP_{\Omega}\lb \calT\rb}\vert}{\fronorm{\calP_{\Omega}\lb \calX_{t-1}\rb-\calP_{\Omega}\lb \calT\rb}}$ is less than $ 10^{-3}$.

The computational results show that the second-order methods with the rank-increasing strategy
do not have a good performance on this task. Thus,  
we only present the     results for the first-order methods under different sizes of sampling set $\Omega$ in Table~\ref{tab: function data}. As in~\cite{steinlechner2016riemannian}, the relative test error on a randomly sampling set $\Gamma$ (outside of $\Omega$)  with $|\Gamma|=100$ is reported, which is defined by $\frac{\fronorm{\calP_{\Gamma}\lb \calX_t\rb- \calP_{\Gamma}\lb \calT\rb}}{\fronorm{ \calP_{\Gamma}\lb \calT\rb}}$. From the table, we can see that all the four algorithms achieve the overall similar performance. 


\begin{table}[ht!]
\renewcommand\arraystretch{1.5}
\centering
\caption{Comparison of the reconstruction quality of RGD (E), RGD (Q), RCG (E), RCG (Q) under different sizes of sampling set for underlying tensors $\calT_1$ and $\calT_2$.}
\label{tab: function data}
\vspace{0.2cm}
\begin{tabular}{|c|c|c|c|c|c|c|c|}
\hline
 &                & \multicolumn{3}{|c|}{underlying tensor $\calT_1$} & \multicolumn{3}{|c|}{underlying tensor $\calT_2$}\\
 \hline
& $\vert\Omega\vert/n^d$ & 0.001    & 0.01     & 0.1      & 0.001    & 0.01     & 0.1      \\
\hline
\multirow{3}{*}{RGD (E)} & relative error          & 9.13e-2  & 2.60e-3  & 1.21e-4  & 1.17e-0  & 8.89e-2  & 5.07e-4  \\
                         & runtime (s)          & 1.01     & 1.31     & 1.25     & 1.36     & 1.31     & 1.53     \\
                         & iteration count &122 &121 & 57 &140 & 124 &73\\
\hline
\multirow{3}{*}{RGD (Q)} & relative error          & 1.05e-1  & 2.60e-3  & 1.40e-4  & 2.16e-1  & 5.80e-2  & 5.33e-4  \\
                         & runtime  (s)          & 0.74     & 0.98     & 1.24     & 0.94     & 1.01     & 1.42     \\
                         & iteration count & 114 & 128 &67 &140 &136 &79\\
\hline
\multirow{3}{*}{RCG (E)} & relative error          & 9.60e-2  & 1.20e-3  & 8.09e-5  & 9.76e-1  & 7.23e-2  & 5.62e-4  \\
                         & runtime  (s)          & 0.55    & 0.89     & 1.04     & 0.65     & 0.93     & 1.24     \\
                         & iteration count & 99 &111 &45 &122 &124 &56\\
\hline
\multirow{3}{*}{RCG (Q)} & relative error          & 1.03e-1  & 1.20e-3  & 8.07e-5  & 1.80e-1  & 8.30e-3  & 5.57e-4  \\
                         & runtime  (s)          & 0.58     & 0.93     & 1.18     & 0.62    & 0.98     & 1.44 \\
                         & iteration count & 96&115 &51 &100 &127 &64\\
\hline

\end{tabular}
\end{table}

\section{Conclusion and Future Directions}
In this paper, we study the quotient geometry of the manifold of fixed  tensor train rank tensors under a preconditioned metric. Algorithms, including Riemannian gradient descent, Riemannian conjugate descent, and Riemannian Gauss-Newton,  have been proposed for the tensor completion problem based on the quotient geometry. It has been empirically demonstrated that the proposed algorithms are competitive with other existing algorithms on  random tensors as  well as  function-related tensors in terms of recovery ability, convergence performance, and reconstruction quality. 

There are a few lines of research for future directions. First, we would like to establish theoretical recovery guarantees of the proposed algorithms for the low tensor train rank tensor completion problem. Apart from that, it is also interesting to design efficient algorithms for the tensor robust principal component analysis (RPCA) problem under the quotient geometry based on the tensor  train format. Furthermore, it is likely to study the quotient geometry of the low-rank hierarchical Tucker tensors under the preconditioned metric studied in this paper.

\bibliographystyle{plain}
\bibliography{refTensor}

\begin{thebibliography}{10}

\bibitem{absil2009optimization}
P-A Absil, Robert Mahony, and Rodolphe Sepulchre.
\newblock {\em Optimization algorithms on matrix manifolds}.
\newblock Princeton University Press, 2009.

\bibitem{anandkumar2015spectral}
Anima Anandkumar, Dean~P Foster, Daniel Hsu, Sham~M Kakade, and Yi-Kai Liu.
\newblock A spectral algorithm for latent dirichlet allocation.
\newblock {\em Algorithmica}, 72(1):193--214, 2015.

\bibitem{bengua2017efficient}
Johann~A Bengua, Ho~N Phien, Hoang~Duong Tuan, and Minh~N Do.
\newblock Efficient tensor completion for color image and video recovery:
  Low-rank tensor train.
\newblock {\em IEEE Transactions on Image Processing}, 26(5):2466--2479, 2017.

\bibitem{boumal2020introduction}
Nicolas Boumal.
\newblock An introduction to optimization on smooth manifolds.
\newblock {\em Available online, May}, 3, 2020.

\bibitem{boumal2014manopt}
Nicolas Boumal, Bamdev Mishra, P-A Absil, and Rodolphe Sepulchre.
\newblock Manopt, a matlab toolbox for optimization on manifolds.
\newblock {\em The Journal of Machine Learning Research}, 15(1):1455--1459,
  2014.

\bibitem{cai2022provable}
Jian-Feng Cai, Jingyang Li, and Dong Xia.
\newblock Provable tensor-train format tensor completion by riemannian
  optimization.
\newblock {\em Journal of Machine Learning Research}, 23(123):1--77, 2022.

\bibitem{cichocki2015tensor}
Andrzej Cichocki, Danilo Mandic, Lieven De~Lathauwer, Guoxu Zhou, Qibin Zhao,
  Cesar Caiafa, and Huy~Anh Phan.
\newblock Tensor decompositions for signal processing applications: From
  two-way to multiway component analysis.
\newblock {\em IEEE signal processing magazine}, 32(2):145--163, 2015.

\bibitem{da2015optimization}
Curt Da~Silva and Felix~J Herrmann.
\newblock Optimization on the hierarchical tucker manifold--applications to
  tensor completion.
\newblock {\em Linear Algebra and its Applications}, 481:131--173, 2015.

\bibitem{dong2022analysis}
Shuyu Dong, Bin Gao, Wen Huang, and Kyle~A Gallivan.
\newblock On the analysis of optimization with fixed-rank matrices: a quotient
  geometric view.
\newblock {\em arXiv preprint arXiv:2203.06765}, 2022.

\bibitem{gallier2020schur}
Jean Gallier et~al.
\newblock The schur complement and symmetric positive semidefinite (and
  definite) matrices (2019).
\newblock {\em URL https://www. cis. upenn. edu/jean/schur-comp. pdf}, 2020.

\bibitem{gilbert1992global}
Jean~Charles Gilbert and Jorge Nocedal.
\newblock Global convergence properties of conjugate gradient methods for
  optimization.
\newblock {\em SIAM Journal on optimization}, 2(1):21--42, 1992.

\bibitem{hackbusch2009new}
Wolfgang Hackbusch and Stefan K{\"u}hn.
\newblock A new scheme for the tensor representation.
\newblock {\em Journal of Fourier analysis and applications}, 15(5):706--722,
  2009.

\bibitem{hitchcock1927expression}
Frank~L Hitchcock.
\newblock The expression of a tensor or a polyadic as a sum of products.
\newblock {\em Journal of Mathematics and Physics}, 6(1-4):164--189, 1927.

\bibitem{holtz2012manifolds}
Sebastian Holtz, Thorsten Rohwedder, and Reinhold Schneider.
\newblock On manifolds of tensors of fixed tt-rank.
\newblock {\em Numerische Mathematik}, 120(4):701--731, 2012.

\bibitem{iannazzo2018riemannian}
Bruno Iannazzo and Margherita Porcelli.
\newblock The riemannian barzilai--borwein method with nonmonotone line search
  and the matrix geometric mean computation.
\newblock {\em IMA Journal of Numerical Analysis}, 38(1):495--517, 2018.

\bibitem{karatzoglou2010multiverse}
Alexandros Karatzoglou, Xavier Amatriain, Linas Baltrunas, and Nuria Oliver.
\newblock Multiverse recommendation: n-dimensional tensor factorization for
  context-aware collaborative filtering.
\newblock In {\em Proceedings of the fourth ACM conference on Recommender
  systems}, pages 79--86, 2010.

\bibitem{kasai2016low}
Hiroyuki Kasai and Bamdev Mishra.
\newblock Low-rank tensor completion: a riemannian manifold preconditioning
  approach.
\newblock In {\em International Conference on Machine Learning}, pages
  1012--1021. PMLR, 2016.

\bibitem{liu2012tensor}
Ji~Liu, Przemyslaw Musialski, Peter Wonka, and Jieping Ye.
\newblock Tensor completion for estimating missing values in visual data.
\newblock {\em IEEE transactions on pattern analysis and machine intelligence},
  35(1):208--220, 2012.

\bibitem{lubich2015time}
Christian Lubich, Ivan~V Oseledets, and Bart Vandereycken.
\newblock Time integration of tensor trains.
\newblock {\em SIAM Journal on Numerical Analysis}, 53(2):917--941, 2015.

\bibitem{mishra2012riemannian}
Bamdev Mishra, K~Adithya Apuroop, and Rodolphe Sepulchre.
\newblock A riemannian geometry for low-rank matrix completion.
\newblock {\em arXiv preprint arXiv:1211.1550}, 2012.

\bibitem{mishra2016riemannian}
Bamdev Mishra and Rodolphe Sepulchre.
\newblock Riemannian preconditioning.
\newblock {\em SIAM Journal on Optimization}, 26(1):635--660, 2016.

\bibitem{oseledets2011tensor}
Ivan~V Oseledets.
\newblock Tensor-train decomposition.
\newblock {\em SIAM Journal on Scientific Computing}, 33(5):2295--2317, 2011.

\bibitem{psenka2020second}
Michael Psenka and Nicolas Boumal.
\newblock Second-order optimization for tensors with fixed tensor-train rank.
\newblock {\em arXiv preprint arXiv:2011.13395}, 2020.

\bibitem{quarteroni2010numerical}
Alfio Quarteroni, Riccardo Sacco, and Fausto Saleri.
\newblock {\em Numerical mathematics}, volume~37.
\newblock Springer Science \& Business Media, 2010.

\bibitem{rauhut2017low}
Holger Rauhut, Reinhold Schneider, and {\v{Z}}eljka Stojanac.
\newblock Low rank tensor recovery via iterative hard thresholding.
\newblock {\em Linear Algebra and its Applications}, 523:220--262, 2017.

\bibitem{schollwock2011density}
Ulrich Schollw{\"o}ck.
\newblock The density-matrix renormalization group in the age of matrix product
  states.
\newblock {\em Annals of physics}, 326(1):96--192, 2011.

\bibitem{steinlechner2016riemannian}
Michael Steinlechner.
\newblock Riemannian optimization for high-dimensional tensor completion.
\newblock {\em SIAM Journal on Scientific Computing}, 38(5):S461--S484, 2016.

\bibitem{tucker1966some}
Ledyard~R Tucker.
\newblock Some mathematical notes on three-mode factor analysis.
\newblock {\em Psychometrika}, 31(3):279--311, 1966.

\bibitem{uschmajew2013geometry}
Andr{\'e} Uschmajew and Bart Vandereycken.
\newblock The geometry of algorithms using hierarchical tensors.
\newblock {\em Linear Algebra and its Applications}, 439(1):133--166, 2013.

\bibitem{uschmajew2020geometric}
Andr{\'e} Uschmajew and Bart Vandereycken.
\newblock Geometric methods on low-rank matrix and tensor manifolds.
\newblock In {\em Handbook of variational methods for nonlinear geometric
  data}, pages 261--313. Springer, 2020.

\bibitem{verstraete2008matrix}
Frank Verstraete, Valentin Murg, and J~Ignacio Cirac.
\newblock Matrix product states, projected entangled pair states, and
  variational renormalization group methods for quantum spin systems.
\newblock {\em Advances in physics}, 57(2):143--224, 2008.

\bibitem{wang2019tensor}
Junli Wang, Guangshe Zhao, Dingheng Wang, and Guoqi Li.
\newblock Tensor completion using low-rank tensor train decomposition by
  riemannian optimization.
\newblock In {\em 2019 Chinese Automation Congress (CAC)}, pages 3380--3384.
  IEEE, 2019.

\bibitem{yuan2017completion}
Longhao Yuan, Qibin Zhao, and Jianting Cao.
\newblock Completion of high order tensor data with missing entries via
  tensor-train decomposition.
\newblock In {\em International Conference on Neural Information Processing},
  pages 222--229. Springer, 2017.

\bibitem{zheng2022riemannian}
Shixin Zheng, Wen Huang, Bart Vandereycken, and Xiangxiong Zhang.
\newblock Riemannian optimization using three different metrics for hermitian
  psd fixed-rank constraints: an extended version.
\newblock {\em arXiv preprint arXiv:2204.07830}, 2022.

\end{thebibliography}
\end{document}